\numberwithin{equation}{section}
\newtheorem{thm}{\bf Theorem}[section]
\newtheorem{lem}[thm]{\bf Lemma}
\newtheorem{cor}[thm]{\bf Corollary}
\newtheorem{prop}[thm]{\bf Proposition}
\newtheorem{prob}[thm]{\bf Problem}
\newtheorem{claim}{Claim}[thm]
\crefname{thm}{theorem}{theorems}
\crefname{lem}{lemma}{lemmas}
\crefname{prop}{proposition}{propositions}
\theoremstyle{definition}
\newtheorem{defn}[thm]{Definition}
\newtheorem{ex}[thm]{Example}
\newtheorem{notn}[thm]{Notation}
\newtheorem{rem}[thm]{Remark}
\newtheorem*{acknowledgment}{Acknowledgments}
\newtheorem*{thm*}{Theorem}
\DeclareMathOperator{\depth}{depth}
\DeclareMathOperator{\IN}{IN}
\DeclareMathOperator{\Inc}{Inc}
\DeclareMathOperator{\ind}{ind}
\DeclareMathOperator{\inm}{im}
\DeclareMathOperator{\msupp}{msupp}
\DeclareMathOperator{\Msupp}{Msupp}
\DeclareMathOperator{\pd}{pd}
\DeclareMathOperator{\reg}{reg}
\DeclareMathOperator{\spi}{sp}
\DeclareMathOperator{\Supp}{supp}
\DeclareMathOperator{\Tor}{Tor}
\newcommand{\Z}{{\mathbb Z}}
\newcommand{\N}{{\mathbb N}}
\newcommand{\R}{{\mathbb R}}
 \newcommand{\Fc}{{\mathcal F}}
\def\Gc{{\mathcal G}}
\def\Icc{{\mathcal I}}
\def\Jcc{{\mathcal J}}
\def\mm{{\mathfrak m}}
\newcommand\bsa{{\boldsymbol a}}
\newcommand\bsb{{\boldsymbol b}}
\newcommand\bse{{\boldsymbol e}}
\newcommand{\bfo}{\mathbf{0}}
\newcommand{\wti}{\widetilde}
\newcommand{\kk}{\Bbbk}
\newcommand\defas{\coloneqq}
\begin{document}

\title[Asymptotic depth of invariant chains]{Asymptotic depth of invariant chains of edge ideals}
\author[T.Q. Hoa]{Tran Quang Hoa}
\address{University of Education, Hue University, 34 Le Loi St., Hue City, Viet Nam}
\email{tranquanghoa@hueuni.edu.vn}

\author[D.T. Hoang]{Do Trong Hoang}
\address{Faculty of Mathematics and Informatics, Hanoi University of Science and Technology, 1 Dai Co Viet, Hai Ba Trung, Hanoi, Vietnam}
\email{hoang.dotrong@hust.edu.vn}

\author[D.V. Le]{Dinh Van Le}
\address{Department of Mathematics, FPT University, Hanoi, Vietnam}
\email{dinhlv2@fe.edu.vn}

\author[H.D. Nguyen]{Hop D. Nguyen}
\address{Institute of Mathematics, Vietnam Academy of Science and Technology, 18 Hoang Quoc Viet, 10307 Hanoi, Vietnam}
\email{ngdhop@gmail.com}

\author[T.T. Nguyen]{Th\'ai Th\`anh Nguy$\tilde{\text{\^E}}$n}
\address{McMaster University, Department of Mathematics and Statistics, 1280 Main Street West, Hamilton, Ontario, Canada}
\address{University of Education, Hue University, 34 Le Loi St., Hue, Viet Nam}
\email{nguyt161@mcmaster.ca}
\email{tnguyen11@tulane.edu}


\begin{abstract}
We completely determine the asymptotic depth,  equivalently, the asymptotic projective dimension of a chain of edge ideals that is invariant under the action of the monoid $\Inc$ of increasing functions on the positive integers. Our results and their proofs also reveal  surprising combinatorial and topological properties of corresponding graphs and their independence complexes. In particular, we are able to determine the asymptotic behavior of all reduced homology groups of these independence complexes.
\end{abstract}


\maketitle

\section{Introduction}

For $n\ge 1$ let $R_n=\kk[x_1,\dots,x_n]$ be the polynomial ring in $n$ variables over a field $\kk$. A vibrant area of research at the crossroads of algebraic geometry, combinatorics, commutative algebra, group theory, representation theory, and statistics concerning chains of ideals $\Icc=(I_n)_{n\ge 1}$ with $I_n\subseteq R_n$ for $n\ge 1$ that are invariant under the actions of the infinite general linear group, the infinite symmetric group, or more generally, the monoid $\Inc$ of strictly increasing functions; see, e.g. \cite{AH07,CEF15,Co67,Co87,Dr10,Dr14,DEF,DEKL,DK14,HS12,LR20,LR24,NR17,NR19,SS16,SS17}. See also \cite{KLR22,L24,LR21,LC,LD} for related directions in discrete geometry, convex optimization, and machine learning.

It follows from a celebrated result of Cohen \cite{Co67} that any $\Inc$-invariant chain $\Icc=(I_n)_{n\ge 1}$ \emph{stabilizes} (see \Cref{sec.InvChain} for more details). That is, from some index $r$ on, any ideal $I_n$ with $n\ge r$ is completely determined by $I_r$ up to the $\Inc$-action. One might therefore hope that invariants related to these ideals are well-behaved. In \cite{LNNR1,LNNR2}, it is conjectured that the regularity and projective dimension of $I_n$ are eventually linear functions in $n$. See \cite{Na} for extensions of these conjectures to OI-modules. Although significant evidence for the conjectures has been obtained \cite{Ga24,LN22,Mu,MR22,Ra21,SS22}, they remain wide open.

Recently, the regularity conjecture \cite{LNNR1} has been verified for chains of edge ideals in \cite{HNT2024}. It is shown that the regularity of an ideal in such a chain is eventually a constant, that, somewhat surprisingly, can only be 2 or 3. Proving this result requires a deep understanding of the chain of graphs corresponding to the chain of edge ideals. It turns out, for instance, that eventually such a graph can only have an induced matching number of at most 2.

As a continuation of \cite{HNT2024}, this paper explores further properties of $\Inc$-invariant chains of edge ideals. Our main goal is to verify the projective dimension conjecture \cite{LNNR2} for those chains. More specifically, we are interested in the following stronger problem.

\begin{prob}
\label{prob.asymptoticdepth}
Let $\Icc=(I_n)_{n\ge 1}$ be an $\Inc$-invariant chain of edge ideals. Compute explicitly  $\depth(R_n/I_n)$ for large $n$, in combinatorial terms.
\end{prob}

The main result of the paper resolves this problem, providing an explicit and simple formula for $\depth(R_n/I_n)$ when $n\gg0$. In particular, it shows that $\depth(R_n/I_n)$ is eventually a constant. This implies, via the Auslander--Buchsbaum formula, that the projective dimension of $I_n$ is eventually a linear function in $n$. Below, the \emph{support} of a monomial ideal is the set of variables that divide at least one minimal monomial generator of that ideal.

\begin{thm}
\label{thm_limdepth_main}
 Let $\Icc=(I_n)_{n\ge 1}$ be an $\Inc$-invariant chain of \textup{(}eventually non-zero\textup{)} edge ideals with the stability index $\ind(\Icc)=r$. Assume that $x_1$ and $x_r$ belong to the support of $I_r$. Denote $j_q=\max\{j\mid  1\le j\le r, x_1x_j\in I_r \}$ and $ \spi(\Icc)=\min\{j-i \mid 1\le i\le j\le r, x_ix_j\in I_r\}$. Then the following hold for all $n\ge 3r$.
 \begin{enumerate}
 \item  If $j_q= r$, then $\depth (R_n/I_n) =\spi(\Icc).$
  
\item If $j_q< r$, then $\depth (R_n/I_n) =\min\{\spi(\Icc), 2\}.$
\end{enumerate}
\end{thm}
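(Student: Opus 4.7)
For the squarefree monomial ideal $I_n = I(G_n)$, Hochster's formula gives
\[
\depth(R_n/I_n) \;=\; \min\bigl\{n - |W| + d + 1 : W \subseteq [n],\ \tilde{H}_d(\Delta[W]; \kk) \ne 0\bigr\},
\]
where $\Delta = \mathrm{Ind}(G_n)$ is the independence complex and $\Delta[W]$ is the subcomplex induced on $W$. My plan is to (1) describe $G_n$ explicitly, (2) exhibit specific $W$'s realizing the claimed depth (upper bound), and (3) prove a matching vanishing statement for all other $W$'s (lower bound).

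\textbf{Step 1 (Structure of $G_n$).} By $\Inc$-invariance and stability at $r$, an edge $\{a,b\}$ with $a<b$ lies in $E(G_n)$ exactly when some generator $x_ix_j$ of $I_r$ admits a strictly increasing $\sigma\colon[r]\to[n]$ with $\sigma(i)=a$ and $\sigma(j)=b$, equivalently $i \le a$, $j-i \le b-a$, and $b \le n-r+j$. Combined with $x_1, x_r \in \Supp(I_r)$ and the induced matching bound (at most $2$) from \cite{HNT2024}, this lets me decompose $G_n$ into a ``bulk'' region, where every pair of gap $\ge \spi(\Icc)$ in a middle range is adjacent, flanked by left/right tails of width at most $r$ governed respectively by $j_q$ and $i_p := \min\{i : x_ix_r \in I_r\}$. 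A direct check shows $\{1,n\} \in E(G_n)$ for $n \ge r$ iff $j_q = r$, which is precisely the dichotomy of the theorem.

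\textbf{Step 2 (Upper bounds).} In Case (i), the edge $\{1,r\}$ and $\Inc$-invariance force every pair in $[n]$ of gap $\ge \spi(\Icc)$ to be adjacent. Choosing $W = [n] \setminus S$ with $S$ a block of $\spi(\Icc)-1$ consecutive vertices, positioned so that $\Delta[W]$ splits into two components, yields $\tilde{H}_0(\Delta[W]) \ne 0$ with $n-|W|+1 = \spi(\Icc)$, whence $\depth \le \spi(\Icc)$. In Case (ii), the absence of the edge $\{1,n\}$ in $G_n$ lets me build an $S^1$-cycle in $\Delta$ through vertex $1$, vertex $n$, and a suitable intermediate independent bridge, producing $\tilde{H}_1(\Delta) \ne 0$ and hence $\depth \le 2$; the Case (i) construction applied to a middle sub-window still gives $\depth \le \spi(\Icc)$, so together we obtain $\depth \le \min(\spi(\Icc), 2)$.

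\textbf{Step 3 (Lower bound and main obstacle).} For the matching lower bound, I would proceed by induction on $|W|$ via the short exact sequence
\[
0 \to R_n/(I_n : x_k) \to R_n/I_n \to R_n/(I_n, x_k) \to 0
\]
for a carefully chosen boundary variable $x_k$, aiming to reduce $\Delta[W]$ either to a cone (hence contractible) or to an independence complex of a strictly simpler $\Inc$-invariant chain already covered. The threshold $n \ge 3r$ is what guarantees enough ``room'' (at least three $r$-blocks) for these reductions to close without boundary interference. The hardest part will be establishing the vanishing $\tilde{H}_d(\Delta[W]) = 0$ whenever $n - |W| + d + 1$ is strictly below the claimed depth --- particularly in Case (ii), where one must rule out all higher-dimensional homology despite the non-trivial $\tilde{H}_1$. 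I expect this requires a shellability or discrete-Morse argument tailored to the $\Inc$-invariant structure, together with a careful case split on whether $W$ meets each of the two tail regions.
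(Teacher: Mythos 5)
Your overall strategy (reduce depth to reduced homology of induced/restricted independence complexes via a Hochster/Takayama-type formula, and key the dichotomy to whether $\{1,n\}\in E(G_n)$, which you correctly observe is equivalent to $j_q=r$) is the same skeleton as the paper's argument. But as written the proposal has a false structural claim and leaves the two genuinely hard steps unproved. First, in Step 2(i) you assert that when $j_q=r$, ``every pair in $[n]$ of gap $\ge\spi(\Icc)$ is adjacent.'' This is false: take $E(G_r)=\{(1,r),(2,4),(3,5)\}$ with $r\ge 6$, so $\spi(\Icc)=2$; then $\{1,4\}$ has gap $3$ but is not an edge of $G_n$, since the only generator involving $x_1$ is $x_1x_r$ and \Cref{lem.RightTriangle} forces $j\le 4$ for $(1,4)$ to lie in some triangle $\Delta((i,j),n-r)$. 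Which pairs are adjacent depends on the position of the pair relative to the ends of $[n]$, not just on the gap; this is exactly why the paper's upper-bound argument (\Cref{thm_upperbound_limdepth}) has to choose the deleted block at a carefully computed location $\alpha=\max\{a+v,b\}$ and verify adjacency case by case using three different generators $(1,a),(v-m,v),(b,r)$. Your disconnecting-block construction can likely be repaired, but not with the justification given.

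Second, and more seriously, in Case (ii) you claim that the absence of $\{1,n\}$ ``lets me build an $S^1$-cycle \dots producing $\wti H_1(\Delta)\ne 0$.'' Exhibiting a cycle in the $1$-skeleton of $\IN(G_n)$ (i.e.\ in $G_n^c$) does not show $\wti H_1\ne 0$: one must prove the cycle is not a boundary of $2$-faces, i.e.\ not filled in by independent triples. This non-vanishing (\Cref{prop.H1nonvanishing}) is the hardest part of the whole theorem; the paper proves it by induction on $r-j_q$ via Mayer--Vietoris applied at the vertex $n$, which in turn requires showing $\wti H_1(\IN(G_n\setminus N[n]))=0$ --- a statement whose proof occupies the entire Appendix and relies on weak chordality of $G_n\setminus N[n]$, Woodroofe's regularity formula, and Kimura's complete-bipartite-subgraph formula for projective dimension. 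Nothing in your proposal addresses why your cycle survives in homology. Finally, your Step 3 is a statement of intent rather than an argument: the lower bound $\depth\ge\spi(\Icc)$ in Case (i) with $\spi(\Icc)\ge 3$ is not automatic from cone/contractibility reductions; the paper needs $\reg(R_n/I_n)=1$ (Fr\"oberg), that every maximal independent set has size $\ge\spi(\Icc)$, and that $(G_n\setminus N[U])^c$ stays connected for small independent $U$ --- and you explicitly defer the analogous vanishing to an unspecified ``shellability or discrete-Morse argument.'' So the proposal identifies the right landmarks but does not constitute a proof.
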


It should be noted that the assumption that $x_1$ and $x_r$ belong to the support of $I_r$ in the previous theorem is not restrictive, as the general case can always be reduced to this case by simple index shifts (see \Cref{lem.normalizingindexshift}). Note also that the index $j_q$ and the invariant $\spi(\Icc)$, which we call the \emph{sparsity index} of the chain $\Icc$, play a crucial role in this result.

\Cref{thm_limdepth_main} exhibits a dichotomy for $\depth(R_n/I_n)$ when $n\gg0$. This is very similar to the dichotomy for $\reg(I_n)$ mentioned above. It would be interesting to have some explanation for this similarity.

Our proof of \Cref{thm_limdepth_main} makes use of Takayama's formula, which allows us to interpret the problem of computing $\depth(R_n/I_n)$ as the problem of computing certain reduced homology groups of the independence complex $\IN(G_n)$ of the graph $G_n$ corresponding to the ideal $I_n$. This approach, therefore, inspires the following problem.

\begin{prob}
\label{pb.homology}
 Determine all reduced homology groups of $\IN(G_n)$ for $n\gg0$.
\end{prob}

In the present paper, we also obtain a complete solution to this problem. As in \Cref{thm_limdepth_main}, the index $j_q$ as well as the sparsity index $\spi(\Icc)$ are essential for the statement of our result.

\begin{thm}
\label{thm_limhomology_intro}
Keep the assumptions of \Cref{thm_limdepth_main}. Denote by $\wti{H}_i(\IN(G_n))$ the $i$-th reduced homology group of $\IN(G_n)$ over the field $\kk$. Then there exist two nonnegative integers $\alpha, \beta$ depending only on the chain $\Icc$ such that the following hold for all $n\gg0$.
\begin{enumerate}
    \item 
    $\wti{H}_i(\IN(G_n))=0$ for $i\ne 0,1$.
    \item 
    $\wti{H}_0(\IN(G_n))
    \cong
    \begin{cases}
     \kk^{n-\alpha}&\text{if }\ \spi(\Icc)=1,\\
     0&\text{if }\ \spi(\Icc)\ge2.
    \end{cases}$
     \item 
     $\wti{H}_1(\IN(G_n))\cong\kk^\beta.$ Furthermore, if $\spi(\Icc)\ge2$, then 
$\beta=\begin{cases}
0&\text{if }\ j_q=r,\\
1&\text{if }\ j_q<r.
\end{cases}$
\end{enumerate}
\end{thm}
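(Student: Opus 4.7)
The plan is to determine the homotopy type of $\IN(G_n)$ for $n\gg0$ from the explicit description of $G_n$ as an $\Inc$-orbit of the generators of $I_r$, then read off reduced homology. First I would unwind the orbit: each generator $x_ix_j$ of $I_r$ contributes to $G_n$ the edges $\{a,b\}$ with $a\ge i$, $b\le n+j-r$, and $b-a\ge j-i$, so the sparsity index $s=\spi(\Icc)$ equals the minimum distance $|b-a|$ over all edges of $G_n$. Hence the consecutive path $1\text{--}2\text{--}\cdots\text{--}n$ sits in $\IN(G_n)$ whenever $s\ge2$, which already settles $\wti H_0(\IN(G_n))=0$ in that case. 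When $s=1$, some orbit forces many short edges in $G_n$; I would then identify the connected components of $\IN(G_n)$ combinatorially and obtain $\wti H_0(\IN(G_n))\cong\kk^{n-\alpha}$ for a constant $\alpha$ depending only on $\Icc$.

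\textbf{Folding to a bounded core.}  For $\wti H_1$ and the vanishing in higher degrees the plan is to fold $\IN(G_n)$ down to a subcomplex whose size depends only on $r$. The key structural observation is that an interior vertex $a\in[r+1,n-r]$ has non-neighborhood in $G_n$ equal to $\{b:|b-a|<s\}\subseteq[a-s+1,a+s-1]$, independent of $a$ up to translation, while boundary vertices $a\le r$ or $a>n-r$ have strictly larger non-neighborhoods because the orbit constraints $a\ge i$ and $b\le n+j-r$ truncate some potential edges. Iterating Engström's fold lemma (if $N_G(v)\subseteq N_G(u)$ then $\IN(G)\simeq \IN(G\setminus u)$), I would contract the long runs of interior vertices, reducing $\IN(G_n)$ to a complex $K(\Icc)$ of size independent of $n$, supported on the two boundary blocks $\{1,\dots,r\}$ and $\{n-r+1,\dots,n\}$ together with a single bridging arc coming from the interior path.

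\textbf{Case analysis and main obstacle.}  With this reduction, the homotopy analysis of $K(\Icc)$ splits along the value of $j_q$. When $j_q=r$, the orbit of $x_1x_r$ supplies edges linking the two boundary blocks in such a way that their joint subcomplex becomes contractible (essentially a cone), so the bridging arc closes trivially and $\wti H_1(\IN(G_n))=0$; when $j_q<r$, that corner edge is absent and the arc must traverse the boundary to reconnect, producing a single $1$-cycle and $\wti H_1(\IN(G_n))\cong\kk$. Because $K(\Icc)$ has no $i$-faces for $i\ge2$ outside a contractible piece, one also obtains $\wti H_i(\IN(G_n))=0$ for $i\ge2$, giving (i). The main obstacle I anticipate is executing the folding uniformly in $n$ and verifying that the intermediate generators of $I_r$, those with $1<i<j<r$, do not alter the homotopy type of $K(\Icc)$, so that only the extremal invariants $\spi(\Icc)$ and $j_q$ control the answer; this should require a careful orbit-level combinatorial analysis in the spirit of \cite{HNT2024}, and is also where the hypothesis $n\ge 3r$ will enter.
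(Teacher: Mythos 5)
Your treatment of $\wti{H}_0$ is essentially the paper's: for $\spi(\Icc)\ge 2$ the consecutive path lies in $G_n^c$, and for $\spi(\Icc)=1$ one counts components of $G_n^c$ (the paper does this by showing $\{2r,\dots,n-2r\}$ are isolated vertices of $G_n^c$ and that the remaining bounded graph $\Gamma_n$ stabilizes up to isomorphism). The problem is the central step for $\wti{H}_1$ and the higher vanishing: the proposed folding to a bounded core $K(\Icc)$ does not work. First, your structural claim is false on the stated range: for a vertex $a\in[r+1,n-r]$ the non-neighborhood need not be $\{b:|b-a|<s\}$ (take $E(G_r)=\{(1,3),(1,r)\}$, $a=n-r$, $b=n-2$: then $|b-a|=r-2\ge s=2$ but $(a,b)\notin E(G_n)$, since $b>n-r+3$ and $b-a<r-1$). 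The claim does hold on roughly $[2r,n-2r]$, but even there Engstr\"om's fold lemma never applies: for two such interior vertices $a<a'$ one has $a'+s-1\in N(a)\setminus N(a')$ and symmetrically, so neither open neighborhood contains the other (and if $|a-a'|\ge s$ they are adjacent, which also forbids a fold); likewise $N(v)\not\subseteq N(a)$ for any boundary vertex $v$, since $a\in N(v)$ but $a\notin N(a)$. So no interior vertex can be deleted by folding, and the reduction to a complex of bounded size is not available. Indeed, for $\spi(\Icc)=1$ no such homotopy equivalence can exist, as $\IN(G_n)$ has $n-\alpha+1$ components; your argument does not explain how $\wti{H}_1(\IN(G_n))\cong\kk^\beta$ is obtained in that case.

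For comparison, the paper avoids any global homotopy reduction: (i) follows from $\reg(R_n/I_n)\le 2$ together with Takayama's formula; (iii) for $\spi(\Icc)\ge 2$ and $j_q<r$ is proved by induction on $r-j_q$ using the Mayer--Vietoris sequence for the vertex $n$ (\Cref{lem.MayerVietoris}), after computing $\wti{H}_0$ and $\wti{H}_1$ of $\IN(G_n\setminus n)$ and $\IN(G_n\setminus N[n])$ (the latter via the weak chordality of $G_n\setminus N[n]$ and Kimura's projective dimension formula); and for $\spi(\Icc)=1$ the constancy of $\dim_\kk\wti{H}_1$ is extracted from the Euler characteristic, using the stabilization of the $f$-vector of $\IN(G_n)$ modulo the isolated points. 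If you want to keep a topological flavor, the Mayer--Vietoris route is the one to develop; the dichotomy in $j_q$ then appears exactly where you expect it, namely in whether $G_n\setminus N[n]$ has a connected complement (\Cref{lem.homology.closedneighborhooddeletion}).
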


\Cref{thm_limhomology_intro} is an unexpected outcome of our approach to \Cref{prob.asymptoticdepth}. On the other hand, the proof of Item (ii)  (which is the harder part) of \Cref{thm_limdepth_main} depends crucially on \Cref{thm_limhomology_intro}(iii) (see \Cref{prop.H1nonvanishing}). It is worth mentioning that in the situation where $\spi(\Icc)\ge 2$ and $j_q<r$, \Cref{thm_limhomology_intro} provides the remarkable information that $\IN(G_n)$ has the same reduced homology as the sphere $\mathbb{S}^1$ for all $n\gg 0$.

This paper marks the first successful application of Takayama's formula to problems on  $\Inc$-invariant chains of monomial ideals. We believe that many of our results and proofs, for example those in \Cref{sec.Bound} and parts of \Cref{sec.MaxDepth,sec.MinDepth}, are amenable to more general situations beyond edge ideals of graphs. Moreover, as an extention of \Cref{pb.homology}, the theme of asymptotic homology stability of simplicial complexes associated to $\Inc$-invariant chains of monomial ideals is worthy of further investigation.

Let us now describe the structure of the paper. \Cref{sec.prelim} provides graph terminology and some basic properties of monomial ideals. In \Cref{sec.InvChain}, we review $\Inc$-invariant chains of edge ideals and present some auxiliary results that reveal interesting structures of corresponding graphs. Upper and lower bounds for the asymptotic depth of an $\Inc$-invariant chain of edge ideals are given in \Cref{sec.Bound}. The next two sections show that the asymptotic depth of any $\Inc$-invariant chain of edge ideals always attains one of the two bounds established in \Cref{sec.Bound}, thus proving \Cref{thm_limdepth_main}. \Cref{sec.homology} is devoted to the proof of \Cref{thm_limhomology_intro}. Lastly, the Appendix proves a technical result from \Cref{sec.MinDepth} that is decisive to the proof of \Cref{thm_limdepth_main}(ii).


\section{Preliminaries}
\label{sec.prelim}

We collect here necessary notions and results concerning graphs and monomial ideals. For unexplained terminology, the reader is referred to \cite{HH,Vi}. Throughout the section, let $S=\kk[x_1,\ldots,x_n]$ be a standard graded polynomial ring over a field $\kk$ and $\mm=\langle x_1,\ldots,x_n\rangle$ its graded maximal ideal.

\subsection{Depth, projective dimension, and regularity}

Let $M$ be a finitely generated graded $S$-module. The \emph{projective dimension} and the \emph{(Castelnuovo-Mumford) regularity} of $M$ are defined as 
\begin{align*}
    \pd(M)&:=\max\{i\mid \Tor^S_i(M,\kk) \ne 0\},\\
    \reg(M)&:=\max\{j-i\mid \Tor^S_i(M,\kk)_j\ne 0\}.
\end{align*}
In particular, for any nonzero homogeneous ideal $I\subseteq S$ we have
\[
\pd(S/I)=\pd(I)+1
\quad\text{and}\quad
\reg(S/I)=\reg(I)-1.
\]
The \emph{depth} of $M$ is related to its projective dimension via the Auslander--Buchsbaum formula: 
\[
\pd(M)+\depth(M)=n.
\]
In this paper, we will mainly use the following interpretations of depth and regularity in terms of local cohomology modules
\begin{align*}
\depth(M)&=\min\{i\mid H^i_\mm(M)\ne 0\},\\
\reg(M)&=\max\{i+j\mid H^i_\mm(M)_j\ne 0\},
\end{align*}
where $H^i_\mm(M)$ denotes the $i$th local cohomology module of $M$ with respect to $\mm$.


\subsection{Takayama's formula}
\label{subsec.Takayama}

For any monomial ideal $I\subseteq S$, there is a $\Z_{\ge0}^n$-grading on $S/I$ that is inherited from the natural $\Z_{\ge0}^n$-grading of $S$. This induces a $\Z^n$-grading on the local cohomology module $H^i_\mm(S/I)$. The dimensions of the $\Z^n$-graded components of $H^i_\mm(S/I)$ are described by Takayama's formula, which we now recall. For brevity, we write $[n]=\{1,\dots,n\}.$ 
Let $\bsa =(a_1,\ldots,a_n) \in \Z^n$ be an integral vector. Set $x^{\bsa}=x_1^{a_1}\cdots x_n^{a_n}$. Also, denote
\[
\Supp(\bsa)=\{i\in [n] \mid a_i\ne0\}
\quad\text{and}\quad
G_\bsa=\{i\in [n] \mid a_i<0\}.
\]
For $F\subseteq[n]$, let $S_{(F)}=S[x_i^{-1}\mid i\in F]$. The \emph{degree complex of $I$ with respect to $\bsa$} is defined as 
\[
\Delta_\bsa(I)=\{F\setminus G_\bsa \mid G_\bsa \subseteq F\subseteq [n], x^{\bsa} \notin IS_{(F)}\}.
\]
Then Takayama's formula \cite[Theorem 1]{Ta} (see also \cite[Section 1]{MTr11}) states that
\[
\dim_\kk H^i_\mm(S/I)_\bsa = \dim_\kk \wti{H}_{i-|G_\bsa|-1}(\Delta_\bsa(I)) \quad \text{for all $i\in \Z$ and all $\bsa\in \Z^n$},
\]
where $\wti{H}_{i}(\Delta_\bsa(I))$ denotes the $i$-th reduced homology of $\Delta_\bsa(I)$ over $\kk$.

Takayama's formula is useful for studying $\depth(S/I)$. 
We gather here two simple results in this direction. For $F\subseteq[n]$, let $S_F=\kk[x_i \mid i\notin F]$ and $I_{F}=IS_{(F)} \cap S_F$. Then $I_F$ is the monomial ideal in $S_F$ obtained from $I$ by setting $x_i=1$ for all $i\in F$. When $F=\{j\}$ for some $j\in[n]$, we simply write $S_j$ and $I_j$ instead of $S_{\{j\}}$ and $I_{\{j\}}$, respectively.

The following criterion for the vanishing of $H^1_\mm(S/I)$ from \cite[Proposition 1.6]{TTr14} can be used to give a lower bound for $\depth(S/I)$.

\begin{prop}
\label{prop_1stLocalCohom}
Let $I\subseteq S$ be a monomial ideal. The following are equivalent:
\begin{enumerate}[\quad \rm (i)]
 \item 
 $H^1_\mm(S/I)=0$;
 \item 
 $\Delta_\bsa(I)$ is connected for all $\bsa \in \Z_{\ge0}^n$ and $\depth (S_j/I_j)\ge 1$ for all $j\in[n]$.
\end{enumerate}
\end{prop}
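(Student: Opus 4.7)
The strategy is to decode $H^1_\mm(S/I)$ via Takayama's formula and see which multidegrees $\bsa$ can contribute. Since the reduced homology of any simplicial complex vanishes in degrees $\le -2$, the identity
\[
\dim_\kk H^1_\mm(S/I)_\bsa = \dim_\kk \wti{H}_{-|G_\bsa|}(\Delta_\bsa(I))
\]
permits a nonzero contribution only when $|G_\bsa|\in\{0,1\}$. Thus $H^1_\mm(S/I)=0$ is equivalent to the conjunction of the following: (a) $\wti{H}_0(\Delta_\bsa(I))=0$ for every $\bsa\in\Z_{\ge0}^n$, and (b) $\wti{H}_{-1}(\Delta_\bsa(I))=0$ for every $\bsa\in\Z^n$ with $G_\bsa=\{j\}$, for every $j\in[n]$. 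Condition (a) is precisely the connectedness condition in (ii), under the standard convention that the void complex and the complex $\{\emptyset\}$ are trivially connected.

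The key step is to translate (b) into a depth condition on $S_j/I_j$. I would establish, for any $j\in[n]$ and any $\bsa\in\Z^n$ with $G_\bsa=\{j\}$, the equality of simplicial complexes
\[
\Delta_\bsa(I) \;=\; \Delta_{\bsa'}(I_j),
\]
where $\bsa'\in\Z_{\ge0}^{n-1}$ is obtained from $\bsa$ by deleting its $j$-th coordinate. To verify this, for any $F\subseteq[n]$ with $j\in F$, a minimal generator $m$ of $I$ decomposes as $m=x_j^{b_j}m_j$ with $x_j\nmid m_j$, and since $x_j$ is a unit in $S_{(F)}$, the condition $x^\bsa\in IS_{(F)}$ is equivalent to some such $m_j$ dividing $x^{\bsa'}$ in $(S_j)_{(F\setminus\{j\})}$. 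The monomials $m_j$ that arise are exactly the generators of $I_j$, so $x^\bsa\in IS_{(F)} \Leftrightarrow x^{\bsa'}\in I_j(S_j)_{(F\setminus\{j\})}$; the bijection $F\leftrightarrow F\setminus\{j\}$ between subsets of $[n]$ containing $j$ and subsets of $[n]\setminus\{j\}$ then matches the face sets of the two degree complexes.

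With this identification in hand, a second application of Takayama's formula, now to $H^0$ of $S_j/I_j$ with respect to the graded maximal ideal of $S_j$, yields, for every $\bsa'\in\Z_{\ge0}^{n-1}$,
\[
\dim_\kk H^0(S_j/I_j)_{\bsa'} \;=\; \dim_\kk \wti{H}_{-1}(\Delta_{\bsa'}(I_j)).
\]
Hence (b) is equivalent to $H^0(S_j/I_j)=0$ for every $j\in[n]$, i.e., to $\depth(S_j/I_j)\ge1$ for every $j$. Combined with (a), this proves (i)$\Leftrightarrow$(ii). The main obstacle I anticipate is the careful bookkeeping in the identification $\Delta_\bsa(I)=\Delta_{\bsa'}(I_j)$, where one must track how the negative $j$-th coordinate of $\bsa$ interacts with a localization at a subset containing $j$; once this is settled, the rest is a formal rearrangement of two applications of Takayama's formula.
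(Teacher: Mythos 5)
Your argument is correct. Note first that the paper does not actually prove this proposition: it is quoted from Terai--Trung \cite[Proposition 1.6]{TTr14}, so your proposal supplies a proof where the paper gives a citation. The reduction via Takayama's formula to the two conditions ($\wti{H}_0(\Delta_\bsa(I))=0$ for $\bsa\in\Z_{\ge0}^n$, and $\wti{H}_{-1}(\Delta_\bsa(I))=0$ when $|G_\bsa|=1$) is exactly right, and your handling of the degenerate complexes $\emptyset$ and $\{\emptyset\}$ under the connectedness convention is the correct reading of condition (ii). Where your route genuinely differs from the machinery the paper does develop is in the second step: the paper's nearest analogue is its \Cref{lem_TrivialComplexCrit}, which converts the condition $\Delta_\bsa(I)\ne\{\emptyset\}$ for $G_\bsa=F$ into $\depth(S_F/I_F)\ge1$ by passing through the saturation, using $H^0_{\mm_F}(S_F/I_F)=\wti{I_F}/I_F$ together with \cite[Corollary 1.4]{TTr14}. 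You instead prove the identification $\Delta_\bsa(I)=\Delta_{\bsa'}(I_j)$ of degree complexes and apply Takayama's formula a second time, to $H^0$ of $S_j/I_j$. Your version is self-contained (it does not lean on \cite[Corollary 1.4]{TTr14}) and the identification of degree complexes is a reusable structural fact; the paper's version via saturations generalizes painlessly to $|F|>1$, which it needs for \Cref{prop_SmallDepthCrit}. The only point worth spelling out more carefully in a final write-up is the verification that $x^{\bsa}\in IS_{(F)}$ iff $x^{\bsa'}\in I_j(S_j)_{(F\setminus\{j\})}$, which uses both that $x_j$ is a unit in $S_{(F)}$ (so $IS_{(F)}=I_jS_{(F)}$) and that $S_{(F)}=(S_j)_{(F\setminus\{j\})}[x_j,x_j^{-1}]$, so that a monomial in the variables $x_i$, $i\ne j$, lies in $I_jS_{(F)}$ exactly when it lies in $I_j(S_j)_{(F\setminus\{j\})}$; you flag this as the delicate point, and indeed it is, but it goes through.
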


The next result provides an upper bound for $\depth(S/I)$.

\begin{prop}
\label{prop_SmallDepthCrit}
Let $I\subseteq S$ be a monomial ideal. Then
\[
\depth(S/I)\le\min\{|F|\mid F\subseteq [n], \depth(S_{F}/I_{F})=0\}.
\]
\end{prop}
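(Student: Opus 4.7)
The plan is to apply Takayama's formula to detect non-vanishing local cohomology in the right homological degree. Suppose $F\subseteq[n]$ satisfies $\depth(S_F/I_F)=0$, and write $d=|F|$. First I would unpack this hypothesis via Takayama's formula applied to $S_F/I_F$: there must exist a multidegree $\bsb$ on $S_F$ with $\wti{H}_{-|G_\bsb|-1}(\Delta_\bsb(I_F))\ne 0$. Since reduced homology of a simplicial complex vanishes in degrees below $-1$, this forces $G_\bsb=\emptyset$, i.e.\ $\bsb\in\Z_{\ge 0}^{[n]\setminus F}$, and $\Delta_\bsb(I_F)=\{\emptyset\}$. Unwinding the definition of the degree complex, this amounts to $x^\bsb\notin I_F$ together with $x_i^N x^\bsb\in I_F$ for some $N\gg 0$ and every $i\in[n]\setminus F$.

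Next I would lift this data to $S$ by defining $\bsa\in\Z^n$ with $a_i=b_i$ for $i\notin F$ and $a_i=-1$ for $i\in F$, so that $G_\bsa=F$ and $|G_\bsa|=d$. The heart of the argument is the claim that $\Delta_\bsa(I)=\{\emptyset\}$. The vertex set of $\Delta_\bsa(I)$ is $[n]\setminus F$, so I only need to verify that $\emptyset$ is a face while no singleton $\{i\}$ with $i\in[n]\setminus F$ is. The first follows from $x^\bsb\notin I_F = IS_{(F)}\cap S_F$: identifying $x^\bsa\in S_{(F)}$ with $x^\bsb\in S_F$ up to the unit $\prod_{j\in F}x_j^{-1}$ gives $x^\bsa\notin IS_{(F)}$. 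The second follows by multiplying the relation $x_i^N x^\bsb\in I_F\subseteq IS_{(F\cup\{i\})}$ by this same unit to obtain $x_i^N x^\bsa\in IS_{(F\cup\{i\})}$; since $x_i$ is already invertible in $S_{(F\cup\{i\})}$, this upgrades to $x^\bsa\in IS_{(F\cup\{i\})}$.

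Once $\Delta_\bsa(I)=\{\emptyset\}$ is established, $\wti{H}_{-1}(\Delta_\bsa(I))\ne 0$, and a second application of Takayama's formula to $S/I$ (with cohomological index $i=|G_\bsa|=d$) produces $H^d_\mm(S/I)_\bsa\ne 0$. The characterization $\depth(S/I)=\min\{i\mid H^i_\mm(S/I)\ne 0\}$ recalled earlier then yields $\depth(S/I)\le d=|F|$, and minimizing over admissible $F$ completes the proof.

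The main obstacle is the careful bookkeeping across the rings $S$, $S_F$, $S_{(F)}$, and $S_{(F\cup\{i\})}$ while tracking the monomial $x^\bsa$ with its negative exponents on $F$; the underlying combinatorial principle—that the trivial degree complex on $S_F$ lifts to the trivial degree complex on $S$ under the sign-inversion trick on coordinates in $F$—is essentially automatic once the definitions are sorted out.
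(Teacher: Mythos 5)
Your argument is correct and follows essentially the same route as the paper: both lift a witness for $\depth(S_F/I_F)=0$ to a vector $\bsa$ with $G_\bsa=F$ whose degree complex is $\{\emptyset\}$, then invoke Takayama's formula in cohomological degree $|F|$. The only difference is that you verify $\Delta_\bsa(I)=\{\emptyset\}$ directly from the definition of the degree complex, whereas the paper packages this step as \Cref{lem_TrivialComplexCrit} via the saturation $\wti{I_F}$ and a citation of \cite[Corollary 1.4]{TTr14}.
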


In order to prove this proposition, we need the following lemma, which is an easy consequence of \cite[Corollary 1.4]{TTr14}.

\begin{lem}
\label{lem_TrivialComplexCrit}
Let $I\subseteq S$ be a monomial ideal and $F$ a subset of $[n]$. For any $\bsa \in \Z^n$, let $\bsa_+$ be the vector obtained from $\bsa$ by replacing each negative entry with zero. Then the following are equivalent:
\begin{enumerate}
 \item 
 $\wti{H}_{-1}(\Delta_\bsa(I))=0$ for all $\bsa \in \Z^n$ with $G_\bsa=F$;
  \item 
  $\Delta_\bsa(I)\neq \{\emptyset\}$ for all $\bsa \in \Z^n$ with $G_\bsa=F$;
  \item 
  $\depth(S_{F}/I_{F})\ge 1$.
\end{enumerate}
\end{lem}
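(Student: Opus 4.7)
The plan is to deduce all three equivalences by combining the standard convention for reduced homology in degree $-1$ with a direct application of Takayama's formula to the quotient ring $S_F/I_F$. The equivalence (i) $\iff$ (ii) is in fact pure convention: for any simplicial complex $\Delta$ one has $\wti H_{-1}(\Delta)\ne 0$ precisely when $\Delta=\{\emptyset\}$, the complex whose only face is the empty face. So (i) and (ii) assert literally the same condition on $\Delta_\bsa(I)$ as $\bsa$ varies over vectors with $G_\bsa=F$.

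For (ii) $\iff$ (iii), I would apply Takayama's formula to the ideal $I_F\subseteq S_F$. The condition $\depth(S_F/I_F)\ge 1$ is equivalent to $H^0_{\mm_F}(S_F/I_F)=0$, and $H^0_{\mm_F}$ can have nonzero components only in $\Z^{[n]\setminus F}$-degrees $\bsb$ with $G_\bsb=\emptyset$, i.e.\ $\bsb\ge\bfo$. Hence Takayama's formula reduces (iii) to the condition that $\Delta_\bsb(I_F)\ne\{\emptyset\}$ for every $\bsb\in\Z^{[n]\setminus F}_{\ge 0}$.

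The technical core is then to match these two families of degree complexes. Given $\bsa\in\Z^n$ with $G_\bsa=F$, I set $\bsb\defas\bsa|_{[n]\setminus F}\in\Z^{[n]\setminus F}_{\ge 0}$ and claim $\Delta_\bsa(I)=\Delta_\bsb(I_F)$ on the vertex set $[n]\setminus F$. For any $F''\subseteq[n]\setminus F$, membership $x^\bsa\in IS_{(F\cup F'')}$ amounts to the existence of a minimal generator $x^\bsu$ of $I$ with $u_i\le a_i$ for all $i\in[n]\setminus(F\cup F'')$; since no such index $i$ lies in $F$, this condition only involves the nonnegative entries of $\bsa$, i.e.\ the entries of $\bsb$, and under the substitution $x_i=1$ for $i\in F$ it translates verbatim into $x^\bsb\in I_F(S_F)_{(F'')}$. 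Because $\bsa\mapsto\bsb$ is surjective onto $\Z^{[n]\setminus F}_{\ge 0}$, this identifies the two families and proves (ii) $\iff$ (iii).

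The only real obstacle, such as it is, is the careful bookkeeping of how minimal generators of $I_F$ arise from those of $I$ under the substitution $x_i=1$ ($i\in F$), and of the compatibility of divisibility in the two localizations $S_{(F\cup F'')}$ and $(S_F)_{(F'')}$. This is routine and is precisely what is encapsulated in \cite[Corollary~1.4]{TTr14}, from which the lemma is cited as a direct consequence.
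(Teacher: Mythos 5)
Your proof is correct, but it takes a genuinely different route from the paper's for the equivalence (ii) $\Leftrightarrow$ (iii). The paper argues via the saturation: it uses $H^0_{\mm_F}(S_F/I_F)=\wti{I_F}/I_F$ together with the quoted criterion of \cite[Corollary 1.4]{TTr14} (namely, $\Delta_\bsa(I)=\{\emptyset\}$ if and only if $x^{\bsa_+}\in\wti{I_F}\setminus I_F$), and the two implications are then short manipulations with the vector $\bsa=\bsb-\sum_{i\in F}\bse_i$. You instead prove the identification $\Delta_\bsa(I)=\Delta_{\bsa|_{[n]\setminus F}}(I_F)$ directly from the definition of the degree complex and then apply Takayama's formula a second time, to $S_F/I_F$ in cohomological degree $0$. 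Your identification is sound: membership $x^\bsa\in IS_{(F\cup F'')}$ is tested only on coordinates outside $F\cup F''\supseteq F=G_\bsa$, where $\bsa$ is nonnegative and agrees with its restriction, and the generators of $I_F$ are precisely the images of those of $I$ under $x_i\mapsto 1$ for $i\in F$; surjectivity of $\bsa\mapsto\bsa|_{[n]\setminus F}$ onto $\Z_{\ge0}^{[n]\setminus F}$ is clear. What your version buys is self-containedness, replacing the external citation by the explicit bookkeeping it encapsulates, at the cost of a somewhat longer argument. One point to make explicit in a write-up: the vanishing of $H^0_{\mm_F}(S_F/I_F)$ in degrees $\bsb$ with $G_\bsb\ne\emptyset$, which you assert in passing, follows either from Takayama's formula itself (the relevant reduced homology then sits in degree $\le -2$) or, more simply, from the fact that $H^0_{\mm_F}(S_F/I_F)$ is a submodule of $S_F/I_F$ and hence supported in nonnegative degrees.
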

\begin{proof}

 (i) $\Leftrightarrow$ (ii) is clear, since $\wti{H}_{-1}(\Delta_\bsa(I))=0$ if and only if $\Delta_\bsa(I)\neq \{\emptyset\}$.
 
 (iii) $\Rightarrow$ (ii): Let $\wti{I_{F}}=\bigcup_{k\ge1}(I_{F}:\mm_F^k)$
be the \emph{saturation} of $I_{F}$, where $\mm_F=\langle x_i \mid i\notin F\rangle$ denotes the graded maximal ideal of $S_F$. Then it is well-known that $H^0_\mm(S_F/I_F)=\wti{I_{F}}/I_F$.
Since $\depth(S_{F}/I_{F})\ge 1$, we have $H^0_\mm(S_F/I_F)=0$, and hence $\wti{I_{F}}=I_{F}$. It follows that $x^{\bsa_+}\not\in\wti{I_{F}}\setminus I_{F}$ for all $\bsa \in \Z^n$ with $G_\bsa=F$. By \cite[Corollary 1.4]{TTr14}, this means that $\Delta_\bsa(I)\neq \{\emptyset\}$ for all such $\bsa$.
 
 (ii) $\Rightarrow$ (iii): Assume, to the contrary, that $\depth(S_{F}/I_{F})= 0$. Then $H^0_\mm(S_F/I_F)=\wti{I_{F}}/I_F\ne 0$. So there exists $\bsb\in \Z_{\ge0}^n$ such that $\Supp(\bsb) \subseteq [n]\setminus F$ and $x^\bsb\in \wti{I_{F}}\setminus I_{F}$. Let $\bse_1,\ldots,\bse_n$ be the standard unit vectors of $\Z^n$. Setting $\bsa=\bsb-\sum_{i\in F}\bse_i$, we get $G_\bsa=F$ and $\bsa_+=\bsb$. Hence $x^{\bsa_+}\in \wti{I_{F}}\setminus I_{F}$, which by \cite[Corollary 1.4]{TTr14} implies that  $\Delta_\bsa(I)=\{\emptyset\}$. This contradiction shows that $\depth(S_{F}/I_{F})\ge 1$, as desired.
\end{proof}

Let us now prove \Cref{prop_SmallDepthCrit}.

\begin{proof}[Proof of \Cref{prop_SmallDepthCrit}]
It suffices to show that if there exists $F\subseteq [n]$ with $|F|=i$ and
$
\depth(S_{F}/I_{F})=0
$,
then $H^{i}_\mm(S/I)\ne0.$ Equivalently, this amounts to showing that if
$H^i_\mm(S/I)=0$, then for any $F\subseteq [n]$ with $|F|=i$ one has
$
\depth(S_{F}/I_{F})\ge 1.
$
Indeed, let $\bsa\in \Z^n$ be any vector with $G_\bsa=F$. Then by Takayama's formula
\[
\dim_\kk \wti{H}_{-1}(\Delta_\bsa(I))= \dim_\kk H^i_\mm(S/I)_\bsa 
=0.
\]
 Hence $\wti{H}_{-1}(\Delta_\bsa(I))=0$, and therefore  
 $
\depth(S_{F}/I_{F})\ge 1
$
by virtue of \Cref{lem_TrivialComplexCrit}. 
\end{proof}


\subsection{Graphs and edge ideals}
\label{subsec.IndCom}

Let $G$ be a simple graph with vertex set $V(G)$ and edge set $E(G)$. 
A \emph{subgraph} of $G$ is a graph $H$ with $V(H)\subseteq V(G)$ and $E(H)\subseteq E(G)$. If, moreover, $E(H)= E(G)\cap \binom{V(H)}{2}$, then $H$ is called an \emph{induced subgraph} of $G$.
The \emph{complement} $G^c$ of $G$ is the graph on $V(G)$ with edge set $\binom{V(G)}{2}\setminus E(G)$. For an integer $m\ge 3$, a \emph{cycle} of length $m$ is a graph $C_m$ with $V(C_m)=\{v_1,\ldots,v_m\}$ and
$
E(C_m)=\left\{\{v_1,v_2\}, \{v_2,v_3\},\ldots,\{v_m,v_1\} \right\}.
$
The graph $G$ is called \emph{weakly chordal} (or \emph{weakly triangulated}) if neither $G$ nor $G^c$ contains an induced cycle of length at least 5. 

A subset $U\subseteq V(G)$ is called {\it independent} if the vertices in $U$ are pairwise non-adjacent. The \emph{independence complex} of $G$, denote by $\IN(G)$, is the simplicial complex
whose faces are the independent sets of $G$. It is evident that the 1-skeleton of $\IN(G)$ is exactly $G^c$. This yields the following fact that should be well-known.

\begin{lem}
\label{lem.vanishingofH0}
Let $G$ be a graph with at least one vertex. Then  $\wti{H}_0(\IN(G))\cong \kk^{\mathfrak{c}(G^c)-1}$, where $\mathfrak{c}(G^c)$ denotes the number of connected components of $G^c$. In particular, $\wti{H}_0(\IN(G))=0$ if and only if $G^c$ is connected.
\end{lem}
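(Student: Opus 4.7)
The plan is to reduce the statement to the standard fact that, for any non-empty simplicial complex $\Delta$, one has $\widetilde{H}_0(\Delta)\cong\kk^{\mathfrak{c}(\Delta)-1}$, where $\mathfrak{c}(\Delta)$ denotes the number of path-connected components of the geometric realization of $\Delta$, equivalently, the number of connected components of the $1$-skeleton of $\Delta$ viewed as a graph (together with the isolated vertices of $\Delta$). So the whole task boils down to identifying the connected components of $\IN(G)$ with those of $G^c$.

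First I would verify that $\IN(G)$ and $G^c$ have the same vertex set. Since $G$ is simple, every singleton $\{v\}\subseteq V(G)$ is independent, so $V(G)\subseteq \IN(G)$; conversely the vertex set of $\IN(G)$ is contained in $V(G)$ by definition, and $V(G^c)=V(G)$ by definition of the complement. Next, I would record the observation already stated in the paragraph preceding the lemma: the $1$-skeleton of $\IN(G)$ equals $G^c$. Indeed, a pair $\{u,v\}$ is a $1$-face of $\IN(G)$ iff it is independent in $G$, iff $\{u,v\}\notin E(G)$, iff $\{u,v\}\in E(G^c)$.

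The key step is then to observe that two vertices lie in the same connected component of a simplicial complex $\Delta$ if and only if they are joined by a path in the $1$-skeleton of $\Delta$, so $\mathfrak{c}(\IN(G))=\mathfrak{c}(G^c)$. Combining with the standard computation $\dim_\kk \widetilde{H}_0(\IN(G))=\mathfrak{c}(\IN(G))-1$ (which follows, for instance, from the simplicial chain complex: $\widetilde{H}_0$ is the cokernel of the boundary map $C_1\to C_0$ quotiented by the augmentation, and its dimension counts components minus one), we obtain $\widetilde{H}_0(\IN(G))\cong \kk^{\mathfrak{c}(G^c)-1}$, as required. The "in particular" assertion is then immediate: $\widetilde{H}_0(\IN(G))=0$ iff $\mathfrak{c}(G^c)=1$ iff $G^c$ is connected (here we use that $V(G)\neq\emptyset$, so $\IN(G)$ is non-empty and the formula is meaningful).

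There is no substantive obstacle; the only small care needed is to make sure isolated vertices of $G^c$ (that is, vertices $v$ of $G$ adjacent to every other vertex of $G$) are treated correctly. Such a $v$ is still a vertex of $\IN(G)$ and forms a connected component on its own both in $\IN(G)$ and in $G^c$, so the bijection between components is preserved. I would either cite a standard reference for the $\widetilde{H}_0$ computation or give the one-line chain-complex argument above.
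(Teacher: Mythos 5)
Your proposal is correct and follows exactly the route the paper intends: the paper states this lemma without a formal proof, deriving it from the observation (made in the sentence immediately preceding the lemma) that the $1$-skeleton of $\IN(G)$ is $G^c$, combined with the standard identification of $\dim_\kk\wti{H}_0$ with the number of connected components minus one. Your write-up simply makes these standard details explicit, including the correct handling of isolated vertices.
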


For a vertex $v\in V(G)$, its \emph{open neighborhood} $N(v)$ is the set of vertices $u\neq v$ that are adjacent to $v$, and its \emph{closed neighborhood} is $N[v]\defas N(v) \cup \{v\}$. More generally, for a subset $U\subseteq V(G)$, we define $N[U]\defas\bigcup_{v\in U}N[v].$ Let $G\setminus U$ denote the graph obtained from $G$ by deleting all vertices in $U$ and all edges adjacent to those vertices. Observe that $G\setminus U$ is an induced subgraph of $G$.
The following result, which is essentially a consequence of the Mayer--Vietoris long exact sequence, can be found in \cite[Theorem 3.5.1]{Jon11} or \cite[Section 2.1]{Kim22}.

\begin{lem}
\label{lem.MayerVietoris}
Let $G$ be a graph and $v$ a vertex of $G$. Then there is a long exact sequence
\begin{align*}
\cdots  \to \wti{H}_i(\IN(G\setminus N[v])) \to  \wti{H}_i(\IN(G\setminus v)) \to  \wti{H}_i(\IN(G)) \to \wti{H}_{i-1}(\IN(G\setminus N[v])) \to \cdots.
\end{align*}
\end{lem}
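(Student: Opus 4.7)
The plan is to derive the long exact sequence from the reduced Mayer--Vietoris theorem applied to a natural decomposition of $\IN(G)$ into two subcomplexes, one of which is contractible. The decomposition is driven by whether or not a face of $\IN(G)$ contains $v$.

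First I would set $A$ to be the closed star of $v$ in $\IN(G)$, namely the subcomplex of faces $\sigma$ with $\sigma \cup \{v\} \in \IN(G)$, and $B\defas \IN(G\setminus v)$. Every independent set of $G$ either contains $v$ or does not, so $\IN(G) = A\cup B$. A direct check shows that $A \cap B = \IN(G\setminus N[v])$: a face lies in $A\cap B$ if and only if it avoids $v$ and is disjoint from $N(v)$, which is precisely the condition of being an independent set of $G\setminus N[v]$. Moreover, $A$ is a simplicial cone with apex $v$ over $A\cap B$, hence contractible, so $\wti{H}_i(A)=0$ for all $i$.

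Plugging these identifications into the reduced Mayer--Vietoris long exact sequence
\[
\cdots \to \wti{H}_i(A\cap B) \to \wti{H}_i(A) \oplus \wti{H}_i(B) \to \wti{H}_i(A\cup B) \to \wti{H}_{i-1}(A\cap B) \to \cdots
\]
immediately yields the stated sequence. The only subtle point worth flagging is the use of reduced (as opposed to unreduced) Mayer--Vietoris in the degenerate situation when $V(G) = N[v]$, so that $G\setminus N[v]$ has empty vertex set; this is handled by working with the augmented chain complex, for which $\IN(G\setminus N[v]) = \{\emptyset\}$ contributes $\wti{H}_{-1} = \kk$ and the sequence remains exact. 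Beyond this bookkeeping, there is no real obstacle, and the argument is entirely standard.
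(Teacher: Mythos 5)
Your argument is correct and is precisely the standard Mayer--Vietoris derivation that the paper itself alludes to (the paper gives no proof, only citing \cite{Jon11} and \cite{Kim22}): the decomposition $\IN(G)=A\cup B$ with $A$ the closed star of $v$ (a cone, hence acyclic) and $B=\IN(G\setminus v)$, with $A\cap B=\IN(G\setminus N[v])$, is exactly the intended route. Your remark on the degenerate case is also handled correctly, since $A\cap B$ always contains the empty face, so the augmented Mayer--Vietoris sequence remains exact.
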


From now on, assume that $V(G)=[n]$.
The \emph{edge ideal} of $G$ is defined as
\[
I(G)=\langle x_ix_j \mid \{i,j\} \in E(G)\rangle
\subseteq S.
\]
Evidently, $I(G)$ is the Stanley--Reisner ideal of the independence complex $\IN(G)$. Thus, in particular, $\dim(S/I(G))= \dim \IN(G)+1$.

Next, let us  recall some notions that are useful for studying the projective dimension of $I(G)$. A \emph{matching} in $G$ is a subset of $E(G)$ that consists of pairwise disjoint edges. If a matching forms the edge set of an induced subgraph of $G$, it is called an \emph{induced matching}. The \emph{induced matching number} $\inm(G)$ of $G$ is the largest cardinality of an induced matching in $G$.
A \emph{strongly disjoint family of complete bipartite subgraphs} of $G$, as introduced in \cite{Ki}, is a collection $\mathfrak{B}_1,\ldots,\mathfrak{B}_g$ of subgraphs of $G$ satisfying the following conditions:
\begin{enumerate}
\item each $\mathfrak{B}_i$ is a complete bipartite subgraph of $G$;
\item $V(\mathfrak{B}_i)\cap V(\mathfrak{B}_j)=\emptyset$ for all $1\le i<j\le g$;
\item there exists an induced matching $\{e_1,\ldots,e_g\}$ in $G$ with $e_i\in E(\mathfrak{B}_i)$ for $i=1,\ldots,g$.
\end{enumerate}
The following formula for $\pd(S/I(G))$ follows from \cite[Theorem 7.7]{NgV16}.

\begin{prop}
\label{prop.pdim.weaklychordal}
Let $G$ be a weakly chordal graph with at least one edge. Then 
\[
\pd(S/I(G))=\max\left\{\sum_{i=1}^g |V(\mathfrak{B}_i)|-g\right\},
\]
where the maximum is taken over all $1\le g\le \inm(G)$ and all strongly disjoint family of complete bipartite subgraphs $\mathfrak{B}_1,\ldots,\mathfrak{B}_g$ of $G$.
\end{prop}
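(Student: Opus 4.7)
The plan is to prove the two inequalities separately; only the upper bound uses weak chordality. Write $d(G) \defas \max\left\{\sum_{i=1}^g |V(\mathfrak{B}_i)|-g\right\}$ for the right-hand side of the claimed formula.

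The lower bound $\pd(S/I(G)) \ge d(G)$ holds for every graph and is due to Kimura \cite{Ki}; I would simply invoke it. The underlying mechanism is Hochster's formula: given a strongly disjoint family $\mathfrak{B}_1, \ldots, \mathfrak{B}_g$, set $W = V(\mathfrak{B}_1) \cup \cdots \cup V(\mathfrak{B}_g)$ and consider the multidegree $\bsa$ equal to the indicator vector of $W$. The induced-matching condition on $\{e_1, \ldots, e_g\}$ with $e_i \in E(\mathfrak{B}_i)$ is exactly what allows one to exhibit a nonvanishing reduced homology class of the induced subcomplex $\IN(G)[W]$ in dimension $|W|-g-1$. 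Via Hochster this yields $\beta_{|W|-g,\,\bsa}(S/I(G)) \ne 0$, and hence the projective dimension is at least $|W| - g$.

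For the reverse inequality $\pd(S/I(G)) \le d(G)$, which is the substantive content of \cite[Theorem 7.7]{NgV16}, I would induct on the number of edges of $G$. The key structural input is the Hayward--Ho\`ang--Maffray theorem: a weakly chordal graph with at least one edge and at least one non-edge contains a two-pair, and in particular one can always find an edge $e=\{u,v\}$ whose deletion preserves weak chordality. Using the short exact sequence
\[
0 \longrightarrow (S/(I(G-e):x_ux_v))(-2) \xrightarrow{\;\cdot x_ux_v\;} S/I(G-e) \longrightarrow S/I(G) \longrightarrow 0,
\]
the projective dimension of $S/I(G)$ is bounded by the projective dimensions of the two outer terms, each of which is (up to shift) the edge ideal of a weakly chordal graph with strictly fewer edges, since the colon ideal is the edge ideal of the induced subgraph of $G-e$ obtained by stripping off $N_{G-e}(u) \cup N_{G-e}(v)$. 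The inductive hypothesis then produces optimal strongly disjoint bipartite families for each smaller graph, which must be combined into a single family for $G$ witnessing the bound $d(G)$.

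The main obstacle is this lifting step. One has to show that the families furnished by induction on $G-e$ and on the colon side can be merged into a strongly disjoint family of complete bipartite subgraphs of $G$ whose total $\sum |V(\mathfrak{B}_i)|-g$ matches the maximum of the two inductive contributions plus the appropriate correction from the short exact sequence. The subtleties are twofold: reintroducing the vertices of $N(u) \cup N(v)$ together with the edge $e$ can enlarge a bipartite subgraph produced on the colon side into something non-bipartite, and the induced-matching condition on the combined representative edges must be preserved. Both phenomena are controlled precisely by the absence of induced $C_k$ and $\overline{C_k}$ for $k \ge 5$, which is the definition of weak chordality. Rather than rederive these compatibilities from scratch, I would package the whole induction through the bouquet formalism of \cite{NgV16}, in which a maximal bouquet produced by the recursion translates directly into a strongly disjoint family of complete bipartite subgraphs realizing $d(G)$.
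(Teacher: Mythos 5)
The paper does not actually prove this proposition: it is quoted as a direct consequence of \cite[Theorem 7.7]{NgV16}, whose lower bound is Kimura's theorem \cite{Ki} --- precisely the two sources you rely on, so your proposal is in substance the same citation-based argument and is correct as such. One correction to your description of Kimura's mechanism: with $\bsa$ the indicator vector of $W$, Hochster's formula gives $\beta_{|W|-g,\,\bsa}(S/I(G))=\dim_\kk \wti{H}_{g-1}\bigl(\IN(G)[W]\bigr)$, so the nonvanishing class produced by the induced matching $e_1,\dots,e_g$ (a join of $g$ copies of $\Sbb^0$ sitting inside the induced subcomplex) lives in homological degree $g-1$, not $|W|-g-1$.
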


We now describe a relationship between degree complexes of $I(G)$ and the independence complex $\IN(G)$, which, together with Takayama's formula, allows us to study $\depth(S/I(G))$ via $\IN(G)$. For a simplicial complex $\Delta$, let $\Fc(\Delta)$ denote the set of its facets.

\begin{lem}
\label{lem.FacetsofDegComplex}
Let $G$ be a graph on $[n]$ with edge ideal $I=I(G)$. Then for any $\bsa \in \Z^n$,
\[
\Fc(\Delta_\bsa(I)) =\{F \setminus G_\bsa \mid \Supp(\bsa)\subseteq F\subseteq [n], F\in \Fc(\IN(G))\}.
\]
In particular, the following hold.
\begin{enumerate}
\item 
$\Delta_\bfo(I(G))=\IN(G)$, where $\bfo$ denotes the zero vector of $\Z^n$.
\item 
If $\bsb\in \Z^n$ such that $\Supp(\bsb)=\Supp(\bsa)$ and $G_\bsb=G_\bsa$, then $\Delta_\bsb(I)=\Delta_\bsa(I)$.
\item 
If $\Supp(\bsa)\not\in\IN(G)$, then $\Delta_\bsa(I)=\emptyset$.  
\item 
If $G_\bsa\subsetneq\Supp(\bsa)$ and $\Supp(\bsa)\in \IN(G)$, then $\Delta_\bsa(I)$ is a cone.
\item 
$\Delta_\bsa(I)=\{\emptyset\}$ if and only if $\Supp(\bsa)=G_\bsa\in \Fc(\IN(G))$.
\end{enumerate}
\end{lem}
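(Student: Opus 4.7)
The plan is to first unpack the definition of $IS_{(F)}$ when $I=I(G)$ is an edge ideal, translate the non-membership condition $x^\bsa\notin IS_{(F)}$ into purely combinatorial language about $\IN(G)$, and then read off the facets of $\Delta_\bsa(I)$ essentially by inspection.

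For the opening translation, I will observe that $I$ is generated by the squarefree quadratics $x_ix_j$ with $\{i,j\}\in E(G)$, and that $S_{(F)}=S[x_k^{-1}:k\in F]$. Writing $x^\bsa=x_ix_j\cdot x^\bsc$ with $c_k\ge 0$ for all $k\notin F$ forces $a_\ell\ge 1$ whenever $\ell\in\{i,j\}\setminus F$. Hence $x^\bsa\in IS_{(F)}$ iff some edge $\{i,j\}\in E(G)$ satisfies $i,j\in F\cup\Supp_+(\bsa)$, where $\Supp_+(\bsa)\defas\{k:a_k>0\}$. Using the standing hypothesis $G_\bsa\subseteq F$, one has $F\cup\Supp_+(\bsa)=F\cup\Supp(\bsa)$, so the condition $x^\bsa\notin IS_{(F)}$ becomes precisely $F\cup\Supp(\bsa)\in\IN(G)$.

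With this translation in hand, the facet description is obtained by standard manipulations. Given any admissible $F$ (i.e., $G_\bsa\subseteq F$ and $F\cup\Supp(\bsa)\in\IN(G)$), replacing $F$ by $F\cup\Supp(\bsa)$ keeps the condition intact, and then extending to a facet $\wti F$ of $\IN(G)$ still yields an admissible superset containing $\Supp(\bsa)\supseteq G_\bsa$. Hence the face $\wti F\setminus G_\bsa\in\Delta_\bsa(I)$ contains $F\setminus G_\bsa$, so every face of $\Delta_\bsa(I)$ lies inside one of the claimed form. Conversely, two distinct facets $F_1,F_2\in\Fc(\IN(G))$ both containing $\Supp(\bsa)\supseteq G_\bsa$ cannot give nested sets $F_1\setminus G_\bsa\subsetneq F_2\setminus G_\bsa$, since both contain $G_\bsa$ and recovering $F_i$ from $F_i\setminus G_\bsa$ is unambiguous. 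This yields the main identity $\Fc(\Delta_\bsa(I))=\{F\setminus G_\bsa:\Supp(\bsa)\subseteq F\in\Fc(\IN(G))\}$.

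The five items are then immediate corollaries. Item (i) is the case $\bsa=\bfo$, where $\Supp(\bsa)=G_\bsa=\emptyset$. Item (ii) is built into the formula, since the facet set depends on $\bsa$ only through $\Supp(\bsa)$ and $G_\bsa$. Item (iii) follows because no facet of $\IN(G)$ contains a non-face. Item (iv) follows by noting that any $v\in\Supp(\bsa)\setminus G_\bsa$ belongs to every facet of $\Delta_\bsa(I)$, making $v$ a cone apex. Item (v) follows because $\Delta_\bsa(I)=\{\emptyset\}$ forces the unique facet of $\Delta_\bsa(I)$ to be empty, i.e., $F=G_\bsa$, which in turn forces $\Supp(\bsa)=G_\bsa$ to itself be a facet of $\IN(G)$. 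I do not expect any real obstacle; the only step that needs attention is the opening translation, where one must carefully track how inverting the variables indexed by $F$ interacts with the negative entries of $\bsa$.
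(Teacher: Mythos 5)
Your proof is correct, but it takes a more self-contained route than the paper. The paper derives the facet description in one line from \Cref{prop.Facets.General}, which it quotes from the literature (Nguyen--Trung) without proof: for $G_\bsa\subseteq F$, $F\setminus G_\bsa$ is a facet of $\Delta_\bsa(I)$ iff $F\in\Fc(\IN(G))$ and $x^{\bsa_+}\notin\mm_F$; the rest is the trivial observation that $x^{\bsa_+}\notin\mm_F$ means $\Supp(\bsa_+)\subseteq F$, hence $\Supp(\bsa)\subseteq F$. You instead re-prove the needed special case of that citation from scratch: your opening translation (for $G_\bsa\subseteq F$, one has $x^\bsa\notin IS_{(F)}$ iff $F\cup\Supp(\bsa)\in\IN(G)$, obtained by tracking which exponents must be nonnegative outside $F$ when dividing by a generator $x_ix_j$) is exactly right, and your saturation-to-a-facet argument plus the observation that $F\mapsto F\setminus G_\bsa$ is inclusion-reflecting on sets containing $G_\bsa$ correctly identifies the facets. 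The five corollaries then follow just as in the paper. What your approach buys is independence from the external reference and an explicit combinatorial picture of all faces (not just facets) of $\Delta_\bsa(I)$ for edge ideals; what the paper's approach buys is brevity and a statement (\Cref{prop.Facets.General}) valid in the generality in which it is also used elsewhere (e.g.\ in \Cref{cor_depthge2crit}). Both arguments are sound.
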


The proof of this result requires the following special case of \cite[Proposition 1.6]{NgTr19}.

\begin{prop}
\label{prop.Facets.General} 
Let $G$ be a graph on $[n]$ with edge ideal $I=I(G)$. For $\bsa\in \Z^n$, denote by $\bsa_+$ the vector obtained from $\bsa$ by setting every negative entry to zero. Let $F \subseteq [n]$ be such that $G_\bsa \subseteq F$. 
Then the following are equivalent:
\begin{enumerate}
\item 
$F \setminus G_\bsa\in\Fc(\Delta_\bsa(I))$;
\item 
$F \in \Fc(\IN(G))$ and $x^{\bsa_+} \not\in \mm_F$, where 
$\mm_F=\langle x_i\mid i\notin F\rangle$.
\end{enumerate}
\end{prop}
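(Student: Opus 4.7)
The plan is to first isolate a single clean criterion describing when $x^\bsa\notin IS_{(F')}$ for $I=I(G)$, and then derive both implications of the proposition as essentially set-theoretic consequences. Concretely, I would first prove the following reformulation: for $F'\subseteq[n]$ with $G_\bsa\subseteq F'$,
\[
x^\bsa\notin IS_{(F')}\quad\Longleftrightarrow\quad F'\cup\Supp(\bsa_+)\in\IN(G).
\]
To establish this, I examine each minimal generator $x_ix_j$ of $I$ with $\{i,j\}\in E(G)$: the Laurent monomial $x^\bsa/(x_ix_j)$ lies in $S_{(F')}$ iff every $k\in\{i,j\}\setminus F'$ satisfies $a_k\ge 1$. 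Since $G_\bsa\subseteq F'$ forces $a_k\ge 0$ for $k\notin F'$, the condition $a_k\ge 1$ is equivalent to $k\in\Supp(\bsa_+)$. Hence $x^\bsa/(x_ix_j)\in S_{(F')}$ iff $\{i,j\}\subseteq F'\cup\Supp(\bsa_+)$. Ranging over all edges and negating yields the criterion.

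With the criterion in hand, the implication (ii) $\Rightarrow$ (i) is short. If $F\in\Fc(\IN(G))$ and $x^{\bsa_+}\notin\mm_F$, i.e.\ $\Supp(\bsa_+)\subseteq F$, then $F\cup\Supp(\bsa_+)=F$ is independent, so $F\setminus G_\bsa\in\Delta_\bsa(I)$. For maximality, any face $F''\setminus G_\bsa$ strictly containing $F\setminus G_\bsa$ corresponds to $F''\supsetneq F$ with $F''\cup\Supp(\bsa_+)\supseteq F''$ independent; but then $F''$ is an independent set of $G$ strictly containing $F$, contradicting the facet hypothesis on $F$ in $\IN(G)$.

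For (i) $\Rightarrow$ (ii), I would run two argument-by-contradiction steps using the criterion. First, to show $\Supp(\bsa_+)\subseteq F$, set $F^*=F\cup\Supp(\bsa_+)$; then $G_\bsa\subseteq F\subseteq F^*$ and the criterion applied to $F$ gives $F^*=F\cup\Supp(\bsa_+)$ independent, so $F^*\cup\Supp(\bsa_+)=F^*$ is independent and $F^*\setminus G_\bsa\in\Delta_\bsa(I)$; maximality of $F\setminus G_\bsa$ then forces $F^*=F$. Second, to show $F\in\Fc(\IN(G))$, note that $F$ itself is independent (taking $F'=F$ in the criterion gives $F\cup\Supp(\bsa_+)=F$ independent after the previous step), and if some $v\notin F$ had $F\cup\{v\}$ independent, then $F'':=F\cup\{v\}$ would satisfy $G_\bsa\subseteq F''$ with $F''\cup\Supp(\bsa_+)=F''$ independent, producing a face $F''\setminus G_\bsa\supsetneq F\setminus G_\bsa$, a contradiction.

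The only delicate point is the bookkeeping in the key criterion, where one must track which endpoints of each edge sit inside $F'$ and invoke $G_\bsa\subseteq F'$ at the right moment; once that criterion is correctly formulated, the rest is a straightforward translation between facets of $\Delta_\bsa(I)$ and facets of $\IN(G)$ and requires no homological input.
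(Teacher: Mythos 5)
Your proof is correct. Note that the paper does not actually prove this proposition: it is quoted as a special case of Proposition~1.6 of the cited work of Nguyen and Trung, so there is no internal argument to compare against. Your self-contained route is sound. The key criterion
\[
x^\bsa\notin IS_{(F')}\iff F'\cup\Supp(\bsa_+)\in\IN(G)\qquad(G_\bsa\subseteq F')
\]
is verified correctly: for an edge $\{i,j\}$, divisibility of $x^\bsa$ by $x_ix_j$ inside $S_{(F')}$ is obstructed only at indices $k\in\{i,j\}\setminus F'$, where $G_\bsa\subseteq F'$ guarantees $a_k\ge 0$ and hence the needed condition $a_k\ge 1$ is exactly $k\in\Supp(\bsa_+)$. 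The two maximality arguments then go through: the dictionary $F''\setminus G_\bsa\supsetneq F\setminus G_\bsa\Leftrightarrow F''\supsetneq F$ (for sets containing $G_\bsa$) is used correctly in both directions, and in (i)~$\Rightarrow$~(ii) the auxiliary set $F^*=F\cup\Supp(\bsa_+)$ cleanly forces $\Supp(\bsa_+)\subseteq F$ before the facet property of $F$ in $\IN(G)$ is checked. This gives a transparent, purely combinatorial proof of a statement the paper only imports by reference.
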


\begin{proof}[Proof of \Cref{lem.FacetsofDegComplex}]
From \Cref{prop.Facets.General} it follows that
\begin{align*}
F\setminus G_\bsa\in \Fc(\Delta_\bsa(I)) &\Longleftrightarrow F\in \Fc(\IN(G)) \quad \text{and} \quad x^{\bsa_+} \notin \mm_F \\
&\Longleftrightarrow F\in \Fc(\IN(G)) \quad \text{and} \quad \Supp(\bsa_+)\cap ([n]\setminus F)=\emptyset \\
& \Longleftrightarrow F\in \Fc(\IN(G)) \quad \text{and} \quad \Supp(\bsa_+) \subseteq F \\
& \Longleftrightarrow F\in \Fc(\IN(G)) \quad \text{and} \quad \Supp(\bsa) \subseteq F \quad \text{(as $G_\bsa\subseteq F$)}. 
\end{align*}
This proves the given description of $\Fc(\Delta_\bsa(I))$. The remaining assertions follow readily. 
\end{proof}

As a consequence of \Cref{prop_1stLocalCohom,prop.Facets.General}, we obtain the following.

\begin{cor}
\label{cor_depthge2crit}
Let $G$ be a graph on $[n]$ with edge ideal $I=I(G)$. Then the following are equivalent:
\begin{enumerate}
 \item $\depth(S/I)\ge 2$;
 \item $H^1_\mm(S/I)=0$;
 \item The complement $G^c$ of $G$ is connected.
\end{enumerate}
\end{cor}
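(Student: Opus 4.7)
The plan is to deduce (i)$\Leftrightarrow$(ii) directly from the local-cohomology description of depth, and to derive (ii)$\Leftrightarrow$(iii) by applying \Cref{prop_1stLocalCohom} and then feeding in the structural information contained in \Cref{lem.FacetsofDegComplex}.

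For the first equivalence, I would first observe that $H^0_\mm(S/I)=0$ holds for every edge ideal. Since $I=I(G)$ is square-free, the associated primes of $S/I$ are exactly its minimal primes, which correspond to the minimal vertex covers of $G$. For a simple graph the whole vertex set $[n]$ is never a minimal vertex cover (removing any single vertex $v$ still leaves a cover, as there are no loops at $v$), so $\mm\notin\Ass(S/I)$. Consequently $\depth(S/I)\ge 1$ automatically, and (i)$\Leftrightarrow$(ii) is immediate from $\depth(S/I)=\min\{i\mid H^i_\mm(S/I)\ne 0\}$.

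For (ii)$\Leftrightarrow$(iii), I would invoke \Cref{prop_1stLocalCohom}, which says that $H^1_\mm(S/I)=0$ iff (a) every degree complex $\Delta_\bsa(I)$ with $\bsa\in\Z_{\ge 0}^n$ is connected, and (b) $\depth(S_j/I_j)\ge 1$ for all $j\in[n]$. For (a), note that $G_\bsa=\emptyset$ whenever $\bsa\in\Z_{\ge 0}^n$, so parts (i), (iii), and (iv) of \Cref{lem.FacetsofDegComplex} show that $\Delta_\bsa(I)$ is either $\IN(G)$ itself (if $\bsa=\bfo$), a cone (if $\emptyset\ne\Supp(\bsa)\in\IN(G)$), or the empty complex (if $\Supp(\bsa)\notin\IN(G)$). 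In the last two cases the complex is trivially connected, so condition (a) collapses to the connectedness of $\IN(G)$, which is equivalent to the connectedness of $G^c$ because $G^c$ is the $1$-skeleton of $\IN(G)$. For (b), I would identify $S_j/I_j$ with the Stanley--Reisner ring of $\IN(G\setminus N[j])$ and reapply the associated-prime argument above: this ring has depth at least $1$ as soon as $V(G)\setminus N[j]\ne\emptyset$, that is, as soon as $j$ has a non-neighbor in $G$. Assuming (iii) with $n\ge 2$, no vertex is isolated in $G^c$, so the condition holds for every $j$.

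Since no deep new input is required beyond \Cref{prop_1stLocalCohom} and \Cref{lem.FacetsofDegComplex}, the main obstacle is simply the bookkeeping in (a): one must check that the cone/empty reduction from \Cref{lem.FacetsofDegComplex} really dispatches every $\bsa\ne\bfo$ so that the condition collapses to the single case $\bsa=\bfo$. Once that is in place, the equivalences assemble immediately.
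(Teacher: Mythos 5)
Your proposal is correct and follows essentially the same route as the paper: both reduce (i)$\Leftrightarrow$(ii) to the vanishing of $H^0_\mm(S/I)$ for a squarefree non-maximal ideal, and both handle (ii)$\Leftrightarrow$(iii) via \Cref{prop_1stLocalCohom}, using \Cref{lem.FacetsofDegComplex} to collapse condition (a) to the single case $\bsa=\bfo$ and identifying the failure of condition (b) with an isolated vertex of $G^c$. Your explicit caveat that (b) needs $n\ge 2$ is a small point the paper glosses over, but otherwise the two arguments coincide.
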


\begin{proof}
Since $I$ is a squarefree non-maxiamal ideal, $H^0_\mm(S/I)=0$, and thus (i) $\Leftrightarrow$ (ii).

(ii) $\Rightarrow$ (iii): By \Cref{prop_1stLocalCohom} and  \Cref{lem.FacetsofDegComplex}, $\Delta_\bfo(I)=\IN(G)$ is connected. Since the 1-skeleton of $\IN(G)$ is precisely $G^c$, we deduce that $G^c$ is connected.

(iii) $\Rightarrow$ (ii): By \Cref{prop_1stLocalCohom}, we have to check the following:
\begin{enumerate}[\quad \rm (a)]
 \item $\Delta_\bsa(I)$ is connected for every $\bsa \in \Z^n_{\ge 0}$;
 \item $\depth(S_j/I_j)\ge 1$ for every $j\in [n]$.
\end{enumerate}
For (a), take $\bsa\in \Z_{\ge0}^n$. If $\bsa=\bfo$, then by \Cref{lem.FacetsofDegComplex}, $\Delta_\bsa(I)=\IN(G)$, which is connected as its 1-skeleton is nothing but $G^c$.
If $\bsa\in \Z_{\ge0}^n\setminus \{\bfo\}$, then $G_\bsa=\emptyset\subsetneq\Supp(\bsa)$. So it follows from \Cref{lem.FacetsofDegComplex} that either $\Delta_\bsa(I)$ is a cone or $\Delta_\bsa(I)=\emptyset$, depending on whether $\Supp(\bsa)\in\IN(G)$ or not. In any case, $\Delta_\bsa(I)$ is connected. Thus (a) is true.

For (b), assume the contrary that $\depth(S_j/I_j)=0$ for some $j\in [n]$. Then being a squarefree monomial ideal, necessarily $I_j=\langle x_i \mid i \in [n]\setminus j\rangle$, so $j$ is an isolated vertex of $G^c$. This contradicts the connectedness of $G^c$. Hence (b) is also true, and the proof is complete.
\end{proof}


\section{Invariant chains of ideals}
\label{sec.InvChain}

In this section we fix notation and provide auxiliary results on invariant chains of edge ideals. Let us begin by recalling the notion of invariant chains of ideals.

\subsection{Invariant chains of ideals}
\label{subsec.InvChain}

Let $\N$ denote the set of positive integers. As before, for each $n\in\N$, let $R_n=\kk[x_1,\dots,x_n]$ be the polynomial ring in $n$ variables over a field $\kk$. Via the natural embedding we may regard $R_n$ as a subring of $R_m$ when $m\ge n$, and thus obtain a chain of increasing polynomial rings $R_1\subset R_2\subset\cdots.$ Let $R\defas\bigcup_{n\ge1}R_n$ denote limit of this chain. Then $R=\kk[x_i\mid i\in\N]$ is a polynomial ring in infinitely many variables. Of interest are ideals in $R$ that are invariant under the action of the \emph{monoid of strictly increasing maps on $\N$}:
	\[
	\Inc = \{ \pi \colon \N \to \N \mid  \pi(n)<\pi(n+1) \text{ for all } n\geq 1\}.
	\]
This monoid acts on $R$ by means of ring endomorphisms via
	\[
	\pi \cdot x_{i}=x_{\pi(i)}
	\quad\text{for any } \pi\in \Inc \text{ and } i\ge 1.
	\]
An ideal $I\subseteq R$ is called \emph{$\Inc$-invariant} if $\pi(f)\in I$ for any $\pi\in \Inc$ and $f\in I$. Although the ring $R$ is not Noetherian, a classical result of Cohen \cite{Co67} (later rediscovered by Aschenbrenner and Hillar \cite{AH07}; see also Hillar and Sullivant \cite{HS12}) says that this ring is \emph{$\Inc$-Noetherian}, meaning that any $\Inc$-invariant ideal $I\subseteq R$ is generated by finitely many $\Inc$-orbits of polynomials.

Cohen's result has an interesting implication for chains of ideals $\Icc=(I_n)_{n\ge 1}$, where $I_n$ is an ideal in $R_n$ for $n\ge 1$. Such a chain is called \emph{$\Inc$-invariant} if
	\begin{equation}
		\label{eq-invariant}
		\langle\Inc_{m,n}(I_m)\rangle_{R_n}\subseteq I_n
		\quad\text{for all } n\ge m\ge 1,
	\end{equation}
	where $\Inc_{m,n}$ denotes the following subset of $\Inc$:
 \[
	\Inc_{m,n} = \{\pi \in \Inc \mid \pi(m) \le n\}
\]
 and $\langle\Inc_{m,n}(I_m)\rangle_{R_n}$ is the ideal in $R_n$ generated by $\Inc_{m,n}(I_m)$. When the chain $\Icc$ is $\Inc$-invariant, we say that it \emph{stabilizes} if there exists $r\ge1$ such that the inclusion in \eqref{eq-invariant} becomes an equality for all $n\ge m \ge r$. The smallest such number $r$ is called the \emph{stability index} of $\Icc$ and is denoted by $\ind(\Icc)$. A consequence of Cohen's result is that every $\Inc$-invariant chain of ideals $\Icc=(I_n)_{n\ge 1}$ always stabilizes (see \cite{HS12,KLR22}).

 The stabilization of the chain $\Icc=(I_n)_{n\ge 1}$ implies that $I_{n+1}$ can be interpreted in terms of $I_n$ for all $n\ge \ind(\Icc)$. Let us make this interpretation more explicit.
 For each integer $k\ge 0$, let $\sigma_k:\N \to \N$ be the strictly increasing map given by 
\begin{equation}
  \label{eq.sigma}
  \sigma_k(i)
=\begin{cases}
i, &\text{if $1\le i\le k$},\\
i+1, &\text{if $i\ge k+1$}.
\end{cases}
\end{equation}
It is evident that $\sigma_k\in\Inc_{n,n+1}$ for all $k\ge0$ and $n\ge1$. Denote $\sigma_k(I_n)=\langle\sigma_k(f)\mid f\in I_n\rangle_{R_{n+1}}$. Then one can easily check that $I_{n+1}=\sum\limits_{k=0}^n\sigma_k(I_n)$ for all  $n\ge\ind(\Icc).$ The next result provides a more concise representation of $I_{n+1}$. 

\begin{prop}
    \label{lem_decomposition}
Let $\Icc=(I_n)_{n\ge 1}$ be an $\Inc$-invariant chain of ideals with $\ind(\Icc)=r$. Then for any subset $\Lambda\subseteq\{0,1,\dots,n\}$ with 
$|\Lambda|=r+1$, it holds that
\[
 I_{n+1}=\sum_{k\in \Lambda}\sigma_k(I_n)
  \quad
  \text{for all } n\ge r.
\]
\end{prop}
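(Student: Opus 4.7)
The plan is to establish the two inclusions separately. The inclusion $\sum_{k\in\Lambda}\sigma_k(I_n)\subseteq I_{n+1}$ is immediate, since each $\sigma_k$ lies in $\Inc_{n,n+1}$ and the chain $\Icc$ is $\Inc$-invariant, so $\sigma_k(I_n)\subseteq I_{n+1}$ for every $k$. For the reverse inclusion, I would first invoke the stabilization hypothesis $\ind(\Icc)=r$ to rewrite
\[
I_{n+1}=\langle\Inc_{r,n+1}(I_r)\rangle_{R_{n+1}}=\sum_{\pi\in\Inc_{r,n+1}}\pi(I_r),
\]
and thereby reduce the task to showing that for each $\pi\in\Inc_{r,n+1}$ there exists some $k^*\in\Lambda$ with $\pi(I_r)\subseteq\sigma_{k^*}(I_n)$.

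\textbf{Key step (pigeonhole + factorization).} The core of the argument is a pigeonhole observation. For fixed $\pi\in\Inc_{r,n+1}$, the image $\pi([r])$ has exactly $r$ elements, whereas $\{k+1\mid k\in\Lambda\}$ has $|\Lambda|=r+1$ elements; hence some $k^*\in\Lambda$ must satisfy $k^*+1\notin\pi([r])$. Using this $k^*$, I factor $\pi$ through $\sigma_{k^*}$: define $\pi'\colon\N\to\N$ on $[r]$ by $\pi'(i)=\pi(i)$ if $\pi(i)\le k^*$ and $\pi'(i)=\pi(i)-1$ if $\pi(i)\ge k^*+2$, and then extend strictly increasingly to $\N$. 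The choice of $k^*$ guarantees that $\pi|_{[r]}$ never takes the value $k^*+1$, which is precisely what forces $\pi'|_{[r]}$ to be strictly increasing; moreover $\pi'(r)\le n$, so $\pi'\in\Inc_{r,n}$, and $\sigma_{k^*}\circ\pi'=\pi$ on $[r]$. Consequently, for every $f\in I_r\subseteq R_r$ we have $\pi(f)=\sigma_{k^*}(\pi'(f))$, hence
\[
\pi(I_r)\subseteq\sigma_{k^*}(\pi'(I_r))\subseteq\sigma_{k^*}(I_n),
\]
where the last inclusion uses $\pi'(I_r)\subseteq I_n$, which is the stabilization at index $r$ applied to $\pi'\in\Inc_{r,n}$. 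Summing over $\pi\in\Inc_{r,n+1}$ yields $I_{n+1}\subseteq\sum_{k\in\Lambda}\sigma_k(I_n)$, completing the proof.

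\textbf{Main difficulty.} I do not anticipate a substantive obstacle. The argument is elementary once one recognizes the pigeonhole principle, and the only technical care needed is in verifying that the factor $\pi'$ is well defined and strictly monotone on $[r]$, which rests precisely on the condition $k^*+1\notin\pi([r])$. Conceptually, the proposition records the fact that for any $(r+1)$-subset $\Lambda$ of $\{0,\ldots,n\}$, the family of composites $\{\sigma_k\circ\rho\mid k\in\Lambda,\ \rho\in\Inc_{r,n}\}$ already exhausts $\Inc_{r,n+1}$, again by counting: any $\pi\in\Inc_{r,n+1}$ must miss some shifted element $k^*+1$ with $k^*\in\Lambda$.
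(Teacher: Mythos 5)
Your proof is correct, but it takes a genuinely different route from the paper. You unwind $I_{n+1}$ all the way back to the generating orbit $\Inc_{r,n+1}(I_r)$ via stabilization, and then use a pigeonhole count ($|\pi([r])|=r$ versus $|\Lambda|=r+1$) to find a $k^*\in\Lambda$ with $k^*+1\notin\pi([r])$, which lets you factor $\pi=\sigma_{k^*}\circ\pi'$ with $\pi'\in\Inc_{r,n}$; the factorization is verified correctly, and only the invariance inclusion $\langle\Inc_{r,n}(I_r)\rangle_{R_n}\subseteq I_n$ is actually needed for the last step, not full stabilization. The paper instead argues by induction on $n$: it starts from the one-step recursion $I_{n+1}=\sum_{k=0}^{n}\sigma_k(I_n)$ and shows that any superfluous $\sigma_l(I_{n+1})$ is absorbed into $\sum_{k\in\Lambda}\sigma_k(I_{n+1})$ using the commutation relation $\sigma_j\circ\sigma_i=\sigma_i\circ\sigma_{j-1}$ for $j>i\ge0$. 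Your argument is arguably more transparent about \emph{why} the threshold is $r+1$ (it is exactly the size needed to guarantee a missed value in an $r$-element image), whereas the paper's induction stays at the level of adjacent indices and never has to reach back to $I_r$, which makes it slightly more self-contained with respect to the combinatorics of $\Inc$. Both proofs are complete and elementary.
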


\begin{proof}
 We proceed by induction on $n$. The case $n=r$ is clearly true since
 $I_{r+1}=\sum_{k=0}^r\sigma_k(I_r)$. Suppose that the assertion has been shown for some $n\ge r$. Let $\Lambda$ be an arbitrary subset of $\{0,1,\dots,n+1\}$ with $|\Lambda|=r+1$.
Since $I_{n+2}=\sum_{k=0}^{n+1}\sigma_k(I_{n+1})$, it suffices to check that
\[
 \sigma_l(I_{n+1})\subseteq
 \sum_{k\in \Lambda}\sigma_k(I_{n+1})
 \quad
  \text{for any}\ l\in\{0,1,\dots,n+1\}\setminus \Lambda.
\]
We fix an $l\in\{0,1,\dots,n+1\}\setminus \Lambda.$ Set 
\[
 \Lambda_1=\{k\in \Lambda\mid k>l\},
 \quad
 \Lambda_2 = \Lambda\setminus \Lambda_1,
 \quad\text{and}\quad
 \Lambda_1'=\{k-1\mid k\in \Lambda_1\}.
\]
It is evident that $k\ge l$ for all $k\in \Lambda_1'$ and $\Lambda'\defas\Lambda_1'\cup \Lambda_2$ is a subset of $\{0,1,\dots,n\}$ with $|\Lambda'|=r+1$. Thus 
$I_{n+1}=\sum_{k\in \Lambda'}\sigma_k(I_n)$
by induction hypothesis.
From \cite[Corollary 4.2]{NR17} we know that
$\sigma_j\circ\sigma_i=\sigma_i\circ\sigma_{j-1}$
whenever $j>i\ge0$. This yields
\[
 \sigma_l\circ\sigma_k=
 \begin{cases}
  \sigma_k\circ\sigma_{l-1}&\text{if } k\in \Lambda_2,\\
  \sigma_{k+1}\circ\sigma_l&\text{if } k\in \Lambda_1'.
 \end{cases}
\]
Therefore,
\begin{align*}
 \sigma_l(I_{n+1})
 &=\sigma_l\Big(\sum_{k\in \Lambda'}\sigma_k(I_{n})\Big)
 =\sum_{k\in \Lambda_1'}\sigma_l\circ\sigma_k(I_{n})
 +\sum_{k\in \Lambda_2}\sigma_l\circ\sigma_k(I_{n})\\
 &=\sum_{k\in \Lambda_1'}\sigma_{k+1}\circ\sigma_l(I_{n})
 +\sum_{k\in \Lambda_2}\sigma_k\circ\sigma_{l-1}(I_{n})\\
 &\subseteq \sum_{k\in \Lambda_1}\sigma_k(I_{n+1})
 +\sum_{k\in \Lambda_2}\sigma_k(I_{n+1})
 =\sum_{k\in \Lambda}\sigma_k(I_{n+1}).
\end{align*}
The proof is complete.
\end{proof}

Given an $\Inc$-invariant chain $\Icc=(I_n)_{n\ge 1}$, it can happen that the ideal $I_n$ is generated by a proper subset of the set of variables of $R_n$ for $n\gg0$, and the ``superfluous variables" may cause unnecessary complications. One can remove these variables by merely shifting indices. To describe this trick, let us restrict to the case of monomial ideals for simplicity.

Let $\Icc=(I_n)_{n\ge 1}$ be an $\Inc$-invariant chain of monomial ideals with $\ind(\Icc)=r$. Denote by $\Gc(I_r)$ the minimal set of monomial generators of $I_r$. We define
\begin{align*}
\msupp(I_r)&=\min\{i\mid x_i \text{ divides } u \text{ for some } u\in \Gc(I_r)\},\\
\Msupp(I_r)&=\max\{i\mid x_i \text{ divides } u \text{ for some } u\in \Gc(I_r)\}.
\end{align*}
Then $1\le \msupp(I_r) \le \Msupp(I_r)\le r$ and no element of $\Gc(I_r)$ involves variables with indices in 
$\{1,\dots,r\}\setminus \{\msupp(I_r),\dots,\Msupp(I_r)\}$. 
As the next lemma indicates, we may always reduce to the case where $\msupp(I_r)=1$ and $\Msupp(I_r)=r$ by simple index shifts. The proof of the lemma is straightforward and is therefore left to the interested reader.

\begin{lem}
\label{lem.normalizingindexshift}
Let $\Icc=(I_n)_{n\ge 1}$ be an $\Inc$-invariant chain of monomial ideals with $\ind(\Icc)=r$. Denote $i_1=\msupp(I_r)$, $p=\Msupp(I_r)$, and $\wti{r}=p-i_1+1$. Consider the chain $\wti{\Icc}=(\wti{I}_n)_{n\ge 1}$ obtained from $\Icc$ by shifting the variables by $i_1-1$ and shifting the index of $I_n$ by $r-\wti{r}$, i.e. $\wti{I}_n=0$ for $n<\wti{r}$ and 
\[
\wti{I}_n = \langle \rho(I_{n-\wti{r}+r}) \rangle_{R_{n-\wti{r}+r}} \cap R_n
\ \text{ for $n\ge \wti{r}$},
\]
where $\rho: R\to R$ is the $\kk$-endomorphism of $R$ induced by
$\rho(x_n)=0$ for $n< i_1$ and $\rho(x_n)=x_{n-i_1+1}$ for $n\ge i_1$.
Then the following hold.
\begin{enumerate}
\item
$\wti{\Icc}$ is an $\Inc$-invariant chain with $\ind(\wti{\Icc})=\wti{r}$, $\msupp(\wti{I}_{\wti{r}})=1$ and $\Msupp(\wti{I}_{\wti{r}})=\wti{r}$.
\item
$
\depth(R_n/I_n)=r-\wti{r}+\depth(R_{n+\wti{r}-r}/\wti{I}_{n+\wti{r}-r})
$
for all $n\ge r$.
\end{enumerate}
\end{lem}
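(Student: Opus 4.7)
My plan rests on a single support observation and a correspondence between strictly increasing maps before and after the shift. For $n \ge r$, any $\pi \in \Inc_{r,n}$ satisfies $j \le \pi(j) \le n-r+j$, so the images of indices in $\{i_1,\ldots,p\}$ under $\pi$ lie in $\{i_1,\ldots,n-r+p\}$. Since every minimal generator of $I_n$ is of the form $\pi(u)$ for some $u\in\Gc(I_r)$ and $\pi\in\Inc_{r,n}$, the support of $I_n$ is contained in $\{x_{i_1},\ldots,x_{n-r+p}\}$, and the remaining $(i_1-1)+(r-p)=r-\wti{r}$ variables of $R_n$ never appear in $I_n$.

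For part (ii), this support restriction yields a graded $\kk$-algebra isomorphism
\[
R_n/I_n \;\cong\; \bigl(R_{n+\wti{r}-r}/\wti{I}_{n+\wti{r}-r}\bigr)[y_1,\ldots,y_{r-\wti{r}}],
\]
where the $y_i$ correspond to the unused variables and the identification $\kk[x_{i_1},\ldots,x_{n-r+p}]\cong R_{n+\wti{r}-r}$ via the downward shift of $i_1-1$ sends $I_n$ to $\wti{I}_{n+\wti{r}-r}$ by the very definition of $\rho$. Since depth is additive under polynomial extension, $\depth(A[y_1,\ldots,y_k]) = \depth(A)+k$, the asserted formula follows at once.

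For part (i), the main task is to transfer the $\Inc$-invariance and stability of $\Icc$ at index $r$ to those of $\wti{\Icc}$ at index $\wti{r}$. I would set up a correspondence on strictly increasing maps: given $\pi\in\Inc$, let $\wti{\pi}\in\Inc$ be defined by $\wti{\pi}(j)=\pi(j+i_1-1)-i_1+1$; conversely, given $\wti{\pi}\in\Inc_{m,n}$, define $\pi$ on $\{1,\ldots,m+r-\wti{r}\}$ by $\pi(j)=j$ for $j<i_1$, $\pi(j)=\wti{\pi}(j-i_1+1)+i_1-1$ for $i_1\le j\le m+i_1-1$, and $\pi(m+i_1-1+k)=\pi(m+i_1-1)+k$ for $1\le k\le r-p$, then extend arbitrarily to $\N$. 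Both maps lie in $\Inc$ by a direct check, and the crucial intertwining $\rho\circ\pi=\wti{\pi}\circ\rho$ holds on monomials supported in $\{x_j:j\ge i_1\}$, which includes all generators of $I_{m+r-\wti{r}}$. Transferring along this correspondence yields the $\Inc$-invariance and stability of $\wti{\Icc}$ at index $\wti{r}$; the bound $\ind(\wti{\Icc})\ge\wti{r}$ is forced by the convention $\wti{I}_n=0$ for $n<\wti{r}$. The identities $\msupp(\wti{I}_{\wti{r}})=1$ and $\Msupp(\wti{I}_{\wti{r}})=\wti{r}$ are immediate since $\rho$ bijectively shifts the variable set $\{x_{i_1},\ldots,x_p\}$ onto $\{x_1,\ldots,x_{\wti{r}}\}$.

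The main obstacle is the bookkeeping of index shifts in the correspondence $\pi\leftrightarrow\wti{\pi}$: one must verify that the lift $\pi$ indeed lands in $\Inc_{m+r-\wti{r},\,n+r-\wti{r}}$, which reduces to the calculation $\pi(m+r-\wti{r})=\wti{\pi}(m)+(r-\wti{r})\le n+(r-\wti{r})$ afforded by the constant-increment extension in the range $[m+i_1,\,m+r-\wti{r}]$. Once this is in hand, the remainder is routine polynomial-ring algebra.
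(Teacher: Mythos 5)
Your proposal is correct. The paper gives no proof of this lemma (it is explicitly "left to the interested reader"), so there is nothing to compare against except the intended routine verification, which is exactly what you supply: the bound $j\le\pi(j)\le n-r+j$ for $\pi\in\Inc_{r,n}$ confines $\Supp(I_n)$ to $\{x_{i_1},\dots,x_{n-r+p}\}$, making $R_n/I_n$ a polynomial extension of $R_{n+\wti{r}-r}/\wti{I}_{n+\wti{r}-r}$ in $r-\wti{r}$ variables (whence (ii)), and the conjugation correspondence $\pi\leftrightarrow\wti{\pi}$ intertwined by $\rho$ transfers invariance and stability (whence (i)). The only points worth spelling out in a final write-up are that the lower bound $\ind(\wti{\Icc})\ge\wti{r}$ uses $\wti{I}_{\wti{r}}\ne 0$ (guaranteed since $\msupp(I_r)$ is defined, i.e. $I_r\ne 0$), and that $\langle\rho(I_n)\rangle_{R_n}\cap R_{n+\wti{r}-r}$ equals the ideal generated by $\rho(\Gc(I_n))$ inside the smaller ring because all these generators already lie there; both are immediate for monomial ideals.
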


\begin{ex}
\label{ex_5cycle}
Consider the chain $\Icc=(I_n)_{n\ge 1}$ with $I_n=0$ for $n<10$, 
$$
I_{10}=\langle x_2x_5,x_2x_7,x_3x_5,x_3x_9, x_7x_9\rangle
$$
is the edge ideal of a 5-cycle (see \Cref{fig.C5}), and $I_n=\langle \Inc_{10,n}(I_{10})\rangle$ for $n\ge 11$.

Then $r=\ind(\Icc)=10$, $\msupp(I_{10})=2$, $\Msupp(I_{10})=9$, and $\wti{r}=8$. So the chain $\wti{\Icc}=(\wti{I}_n)_{n\ge 1}$ is given by $\wti{I}_n=0$ for $n<8$, $\wti{I}_8=\langle x_1x_4,x_1x_6, x_2x_4, x_2x_8, x_6x_8 \rangle$, and
$\wti{I}_n=\langle \Inc_{8,n}(\wti{I}_8)\rangle$ for $n\ge 9$. Evidently, $\ind(\wti{\Icc})=8=\Msupp(\wti{I}_8)$ and $\msupp(\wti{I}_8)=1$. Moreover, $x_1,x_n$ is a regular sequence on $R_n/I_n$ and one has
$\depth(R_n/I_n)=\depth(R_{n-2}/\wti{I}_{n-2})+2$ for all $n\ge 10$.

\begin{figure}[ht]
\includegraphics[width=26ex]{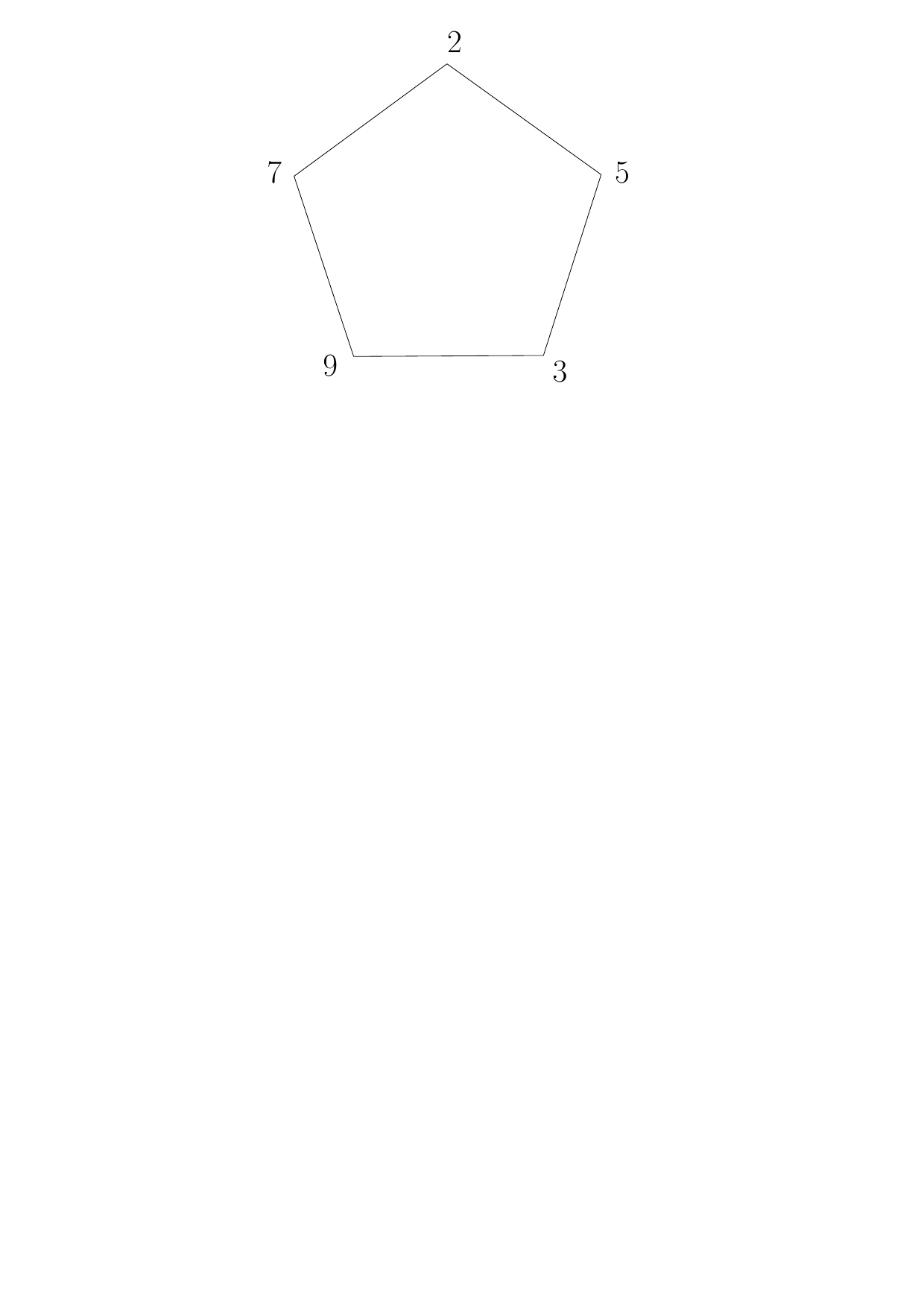}
\caption{An indexed $5$-cycle}
\label{fig.C5}
\end{figure}
\end{ex}

\subsection{Invariant chains of edge ideals}
\label{subsec.InvEd}

From now on, we focus on chains $\Icc=(I_n)_{n\ge 1}$, where each $I_n$ is an edge ideal. For convenience, the following notation will be fixed throughout the remaining part of the paper.

\begin{notn}
\label{notn_chainofedgeids}
\hfill

\begin{enumerate}
    \item 
    Let $\Icc=(I_n)_{n\ge 1}$ be an $\Inc$-invariant chain of eventually nonzero edge ideals with stability index $r=\ind(\Icc)$. For $n\ge 1$ let $G_n$ be the graph corresponding to $I_n$. 
    We always assume that $E(G_r)=\{\{i_1,j_1\},\ldots,\{i_s,j_s\}\}$ with $i_t<j_t$, $ i_1\le \cdots\le i_s$, and moreover if $i_t=i_{t+1}$ then $j_t<j_{t+1}$. Thus, in particular, 
    $\msupp(I_r)=i_1$ and $\Msupp(I_r)=\max\{j_1,\ldots,j_s\}.$ Set
\begin{align*}
  j_q&=\max\{j_t\mid i_t=i_1,\, 1\le t\le s \},\\
  p&=\Msupp(I_r)=\max\{j_1,\ldots,j_s\},\\
  b&=\min\{i_t\mid j_t=\Msupp(I_r)\}=\min\{i_t\mid j_t=p\},\\
  B&=\max\{i_t\mid j_t=\Msupp(I_r)\}=\max\{i_t\mid j_t=p\},\\
  \wti{r}&=\Msupp(I_r)-\msupp(I_r)+1=p-i_1+1.
\end{align*}

Moreover, we write $(i,j)\in E(G_n)$ if $\{i,j\}\in E(G_n)$ and $i<j$.

\item 
For $(i,j)\in \N^2$ and an integer $m\ge 0$, denote by $\Delta((i,j),m)$ the isosceles right triangle with the vertices $(i,j), (i,j+m), (i+m,j+m)$, whose legs are of length $m$.  
\end{enumerate}
\end{notn}
\begin{ex}
For the chain $\Icc$ in \Cref{ex_5cycle}, it is already known that $r=10$ and $\wti{r}=8$. As $E(G_{10})=\{(2,5),(2,7),(3,5),(3,9),(7,9)\}$, we see that
\[
i_1 =2,\ j_q=7,\ p=9, \
b=3,\ B=7.
\]
\end{ex}

We provide here some useful asymptotic properties of the graphs $G_n$. Let us first recall a simple observation from \cite[Lemma 3.3]{HNT2024} that is crucial for testing membership in $I_n$. 

\begin{lem}
\label{lem.RightTriangle}
Let $1 \le i < j \le r$ and $n \ge r$ be positive integers. Then for integers $k < l$, the following are equivalent:
 \begin{enumerate}
\item 
$x_kx_l \in \Inc_{r,n}(x_ix_j)$;
\item 
It holds that $0\le k-i\le l-j\le n-r$;
\item 
$(k,l) \in \Delta((i,j),n-r)$.
\end{enumerate}
\end{lem}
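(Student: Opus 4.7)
The plan is to prove (i) $\Rightarrow$ (ii), observe that (ii) $\Leftrightarrow$ (iii) is a direct unpacking of the definition of $\Delta((i,j),n-r)$, and then close the cycle with an explicit construction for (ii) $\Rightarrow$ (i).

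For (i) $\Rightarrow$ (ii), the hypothesis produces some $\pi\in\Inc_{r,n}$ with $x_{\pi(i)}x_{\pi(j)}=x_kx_l$. Since both $(i,j)$ and $(k,l)$ are ordered and $\pi$ preserves order, this forces $\pi(i)=k$ and $\pi(j)=l$. The three inequalities in (ii) then arise from three local applications of strict monotonicity of $\pi$: comparing $\pi$ on $[1,i]$ against the identity (with $\pi(1)\ge 1$) gives $k-i\ge 0$; comparing $\pi$ on $[i,j]$ gives $l-k\ge j-i$, equivalently $k-i\le l-j$; and comparing $\pi$ on $[j,r]$ together with the constraint $\pi(r)\le n$ gives $l-j\le n-r$.

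For (ii) $\Leftrightarrow$ (iii), I would simply describe the triangle $\Delta((i,j),n-r)$ with vertices $(i,j)$, $(i,j+(n-r))$, $(i+(n-r),j+(n-r))$ as the set of $(k,l)$ cut out by $k\ge i$ (right of the vertical leg), $l\le j+(n-r)$ (below the horizontal leg), and $l-j\ge k-i$ (above the hypotenuse of slope $1$ through $(i,j)$). These three inequalities are exactly the chain $0\le k-i\le l-j\le n-r$.

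For (ii) $\Rightarrow$ (i), the construction I have in mind is the piecewise map $\pi(t)=t+(k-i)$ for $1\le t\le i$, $\pi(t)=k+(t-i)$ for $i\le t\le j-1$, $\pi(j)=l$, and $\pi(t)=t+(l-j)$ for $t\ge j$. The inequality $k-i\ge 0$ guarantees $\pi(1)\ge 1$; strict monotonicity across the jump $t=j-1\to j$ uses $l-k\ge j-i$; and the requirement $\pi(r)\le n$ becomes $l-j\le n-r$. Thus $\pi\in\Inc_{r,n}$ with $\pi(i)=k$ and $\pi(j)=l$, yielding (i). I do not anticipate any serious obstacle; the one thing to verify carefully is strict monotonicity at each break point of the piecewise construction, and each such check reduces to exactly one of the inequalities in (ii).
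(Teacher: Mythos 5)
Your proof is correct. Note that the paper does not actually prove this lemma; it is imported verbatim from \cite[Lemma 3.3]{HNT2024}, so there is no in-text argument to compare against. Your three steps are exactly the natural ones: (i)$\Rightarrow$(ii) correctly uses that a strictly increasing integer map satisfies $\pi(t')-\pi(t)\ge t'-t$, applied on $[1,i]$, $[i,j]$, and $[j,r]$ together with $\pi(r)\le n$; the description of $\Delta((i,j),n-r)$ by the three half-planes $k\ge i$, $l-j\ge k-i$, $l\le j+(n-r)$ is the correct unpacking of the triangle with the stated vertices; and your explicit $\pi$ for (ii)$\Rightarrow$(i) (which is just the shift by $k-i$ up to $j-1$ and by $l-j$ from $j$ on) passes all the monotonicity checks, each one consuming exactly one inequality from (ii). No gaps.
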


The following result shows a \emph{density} property of $G_n$ for $n\gg0$: If $(k,l)\in E(G_n)$ is identified with the point $(k,l)\in\R^2$, then moving this point in all four cardinal directions by small integral steps still yields edges of $G_n$ (see \Cref{fig:moves}).

\begin{lem}
\label{lem.shortmoves}
Let $n\ge r$ be an integer. Using \Cref{notn_chainofedgeids}, the following hold.
\begin{enumerate}
\item \textup{(Short east and short south moves)} Assume that $n\ge 3r$. Let  $k\le k'\le r$ and $n-r\le l'\le l$ be integers. If $(k,l)$ is an edge of $G_n$, then so are $(k,l')$ and $(k',l)$.
\item \textup{(Short west moves)} Let $r\le k''\le k < l$ be integers. If $(k,l)$ is an edge of $G_n$, then so is $(k'',l)$.
\item \textup{(Short north moves)} Let $k < l \le l'' \le n-r$ be integers. If $(k,l)$ is an edge of $G_n$, then so is $(k,l'')$.
\end{enumerate}
\end{lem}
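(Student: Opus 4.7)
My plan is to reduce the entire lemma to the characterization given by \Cref{lem.RightTriangle} and to make a single key observation: for each of the four short moves, the \emph{same} source edge $(i, j) \in E(G_r)$ that witnesses $(k, l) \in E(G_n)$ also witnesses the destination pair. Recall that $(k, l) \in E(G_n)$ precisely when $0 \le k - i \le l - j \le n - r$ for some $(i, j) \in E(G_r)$, so the entire proof is just a check that the same triple of inequalities continues to hold after the move.

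Parts (ii) and (iii) are immediate with this approach. For the west move $(k'', l)$, the assumption $k'' \ge r \ge i$ together with $k'' \le k$ yields $0 \le k'' - i \le k - i \le l - j$; and for the north move $(k, l'')$, the assumption $l \le l'' \le n - r$ yields $k - i \le l - j \le l'' - j$ as well as $l'' - j \le n - r$. Neither case requires any hypothesis beyond $n \ge r$, and the upper bound $l - j \le n - r$ is inherited from the source triangle.

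For part (i), the hypothesis $n \ge 3r$ plays the decisive role. Note that the premise $n - r \le l' \le l$ implicitly forces $l \ge n - r$; combined with $j \le r$ and $n \ge 3r$, this gives
\begin{equation*}
l - j \ge (n - r) - j \ge n - 2r \ge r,
\end{equation*}
and analogously $l' - j \ge r$. Since $k, k' \le r$ and $i \ge 1$, both $k - i$ and $k' - i$ are at most $r - 1 < r \le l - j,\, l' - j$, so the middle inequality of the triangle condition is verified for the east destination $(k', l)$ and for the south destination $(k, l')$. The remaining two inequalities are either inherited from the source triangle for $(k, l)$ (the upper bound $\le n - r$) or follow from the monotonicity $l' \le l$ and $k' \ge k \ge i$.

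I do not foresee a conceptual obstacle; the whole argument is essentially a single geometric observation, namely that when $n \ge 3r$ every source triangle $\Delta((i, j), n - r)$ has at least $r$ units of vertical slack near its upper edge and at least $n - r$ units of horizontal extent, both of which comfortably exceed the $r$-sized perturbations permitted by the lemma. If anything, the subtle point is to notice that the south-move premise $l' \ge n - r$ silently upgrades our knowledge of $l$ itself, which is what lets the east move go through with the same source as well.
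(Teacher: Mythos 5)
Your proof is correct and follows essentially the same route as the paper's: both reduce each move to the triangle characterization of \Cref{lem.RightTriangle} and verify that the same source edge $(i,j)\in E(G_r)$ witnesses the destination pair, with the hypothesis $n\ge 3r$ used only in part (i) to get $l'-j\ge n-2r\ge r> k-i$ (the paper writes the equivalent inequality $k-i+j\le 2r-1\le n-r\le l'$). No gaps.
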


\begin{figure}[ht]
\centering
\includegraphics[width=48ex]{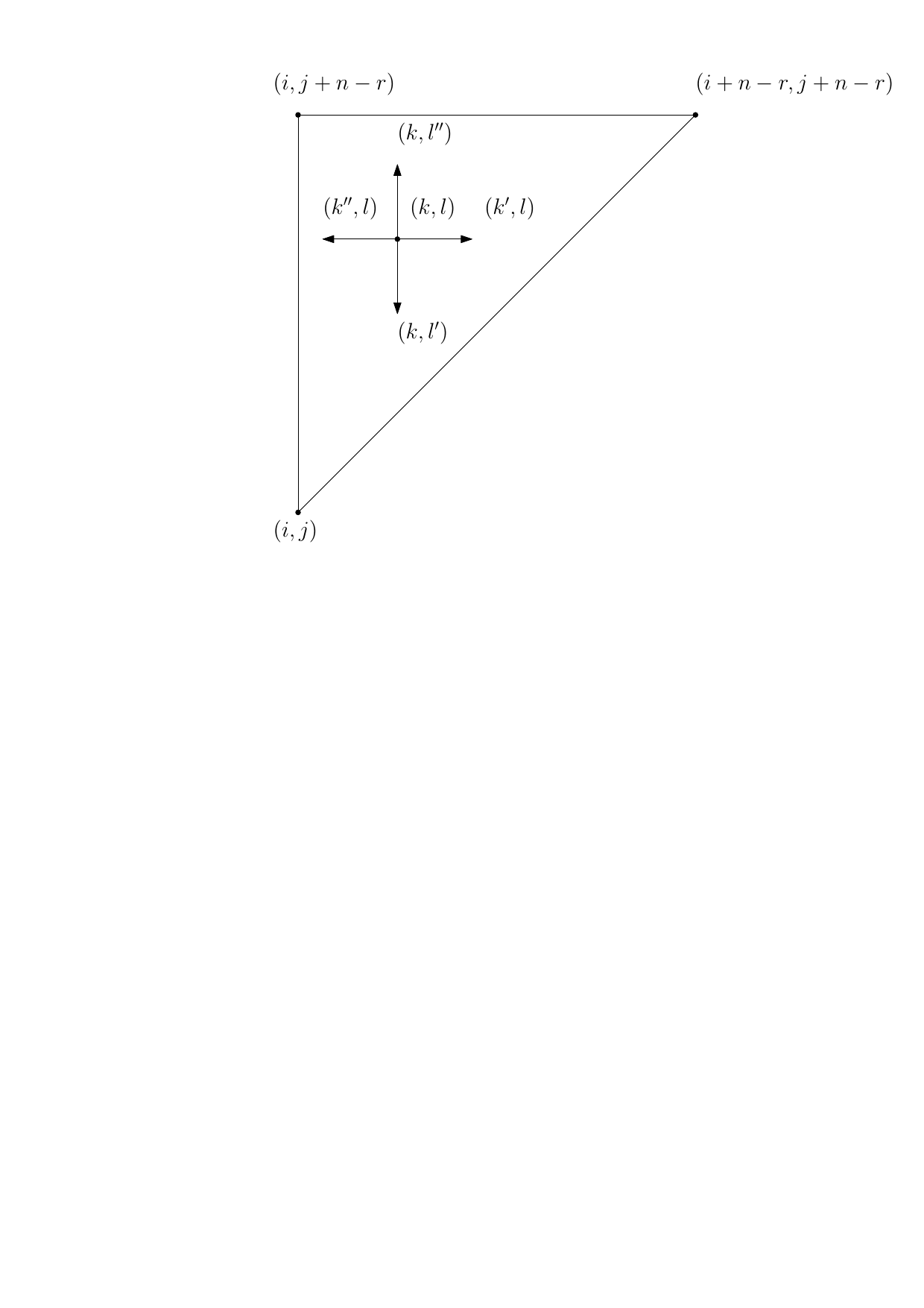}
\caption{Short moves in the four cardinal directions}
\label{fig:moves}
\end{figure}

\begin{proof}
(i) As $n\ge 3r$, we deduce that
\[
k \le k' \le r <  n-r \le l'\le l.
\]
Since $(k,l)\in E(G_n)$, it follows from \Cref{lem.RightTriangle} that $(k,l)\in \Delta((i,j),n-r)$ for some $(i,j)\in E(G_r)$ with $1\le i<j\le r$. In other words,
\begin{equation}
\label{eq.Shortmoves}
   0\le k-i\le l-j\le n-r. 
\end{equation}
We only prove that $(k,l')\in E(G_n)$; similar arguments work for $(k',l)$ as well. By \Cref{lem.RightTriangle}, it suffices to show that $(k,l') \in \Delta((i,j),n-r)$, or equivalently,
\[
0\le k-i \le l'-j\le n-r.
\]
The first and third inequalities follow immediately from \eqref{eq.Shortmoves} and the fact that $l'\le l$.
Also, the second inequality holds since $n\ge 3r$ and
\[
k-i+j \le r-1+r \le n-r \le l'.
\]

(ii) Similarly to (i), if $(k,l)\in \Delta((i,j),n-r)$ for some $(i,j)\in E(G_r)$, then we also have $(k'',l)\in \Delta((i,j),n-r)$, since $0\le r-i\le k''-i$.

The  proof of (iii) is similar and is left to the attentive reader.
\end{proof}

Our next goal is to show that $G_n$ does not contain long induced cycles for $n\gg0$. For this, we need the following consequence of \cite[Lemma 3.6]{HNT2024}.

\begin{lem}
    \label{lem.K2}
    Use \Cref{notn_chainofedgeids}. Let $n\ge 3r$ and $(u_1,v_1),(u_2,v_2)\in E(G_n)$. If $(u_1,v_1),(u_2,v_2)$ form an induced matching of $G_n$, then $[u_1,v_1]\cap[u_2,v_2]=\emptyset.$
\end{lem}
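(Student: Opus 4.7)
The lemma is advertised as a consequence of \cite[Lemma 3.6]{HNT2024}, so the most direct route is to invoke that result. Unwinding the argument, I would proceed by contradiction. Suppose the intervals intersect. After swapping the two edges if necessary, arrange $u_1\le u_2$; the induced matching assumption forces the four endpoints to be pairwise distinct, and combining this with $u_t<v_t$ for $t=1,2$ reduces the overlap to the strict chain $u_1<u_2<v_1$. Two geometric configurations remain: the \emph{nested} case $u_1<u_2<v_2<v_1$ and the \emph{crossing} case $u_1<u_2<v_1<v_2$.

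In each configuration the aim is to exhibit one of the four cross pairs $\{u_1,u_2\}$, $\{u_1,v_2\}$, $\{u_2,v_1\}$, $\{v_1,v_2\}$ as an edge of $G_n$, contradicting the induced matching hypothesis on $(u_1,v_1)$ and $(u_2,v_2)$. To that end I would choose $(i_a,j_a),(i_b,j_b)\in E(G_r)$ with $(u_1,v_1)\in\Delta((i_a,j_a),n-r)$ and $(u_2,v_2)\in\Delta((i_b,j_b),n-r)$ via \Cref{lem.RightTriangle}, and then argue directly that one of the cross pairs inherits the inequalities defining one of these triangles. For instance, in the crossing case with $i_b\le u_1$ the point $(u_1,v_2)$ automatically satisfies the three defining conditions for $\Delta((i_b,j_b),n-r)$, since $u_1-i_b<u_2-i_b\le v_2-j_b$ and $v_2-j_b\le n-r$; analogously, in the nested case the point $(u_1,v_2)$ lies in $\Delta((i_a,j_a),n-r)$ whenever $v_2\le j_a+(n-r)$.

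The boundary subcases where these pointwise inclusions just fail are handled by sliding the two given edges with \Cref{lem.shortmoves}. Specifically, a short east or short north move applied to $(u_1,v_1)$, or a short west or short south move applied to $(u_2,v_2)$, can be used to route the argument through the \emph{safe zone} $r\le k,\ l\le n-r$, which is nonempty precisely because $n\ge 3r$. By first pushing one endpoint into this safe zone and then performing a second short move in the complementary direction, one lands at the desired cross pair as an edge of $G_n$.

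\textbf{Main obstacle.} The delicate point is the bookkeeping in the case analysis: the short-move regimes of \Cref{lem.shortmoves} are one-sided (short east is confined to $k'\le r$, short south to $l'\ge n-r$, and similarly for the other two directions), so one must combine them carefully to cover every possible position of $u_1,u_2,v_1,v_2$ relative to the thresholds $r$ and $n-r$. The hypothesis $n\ge 3r$ is exactly what creates the slack $|[r,n-r]|\ge r$ that makes such a route available in every subcase, and this is also why this hypothesis appears in the statement.
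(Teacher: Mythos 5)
The paper's proof is a one-line application of \cite[Lemma 3.6]{HNT2024}: after choosing triangles $\Delta((i_k,j_k),n-r)$ containing $(u_k,v_k)$ via \Cref{lem.RightTriangle} and assuming $u_1<u_2$, that lemma directly yields $v_1<i_2<n-r+j_1<u_2$, whence $v_1<u_2$ and the intervals are disjoint. You mention this route in your first sentence but do not take it; instead you attempt to reprove the separation from scratch, and the sketch has genuine gaps.

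Two concrete problems. First, your nested-case claim is incorrect as stated: membership of $(u_1,v_2)$ in $\Delta((i_a,j_a),n-r)$ requires not only $v_2\le j_a+(n-r)$ but also the middle inequality $u_1-i_a\le v_2-j_a$, i.e.\ $v_2\ge u_1+(j_a-i_a)$. In the crossing case this does follow from $v_2>v_1\ge u_1+(j_a-i_a)$, but in the nested case $v_2<v_1$ and the inequality can fail (for instance when $(i_a,j_a)=(1,r)$ and $v_2-u_1<r-1$), so that branch needs a separate argument you do not supply. Second, and more seriously, the proposed ``route through the safe zone'' cannot close the remaining cases: every move in \Cref{lem.shortmoves} requires the coordinate being changed to land in $[1,r]$ or in $[n-r,n]$, so no composition of short moves can carry a coordinate to a prescribed target strictly between $r$ and $n-r$, nor across one of these thresholds in the forbidden direction. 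Concretely, in a crossing configuration with $u_1<r<u_2<v_1<n-r<v_2$ and $i_b>u_1$, none of the four cross pairs is reachable by short moves from $(u_1,v_1)$ or $(u_2,v_2)$: a west move to $(u_1,v_2)$ needs $u_1\ge r$, a north move to $(u_1,v_2)$ needs $v_2\le n-r$, and the east/south moves toward $(u_2,v_1)$ need $v_1\ge n-r$. Ruling out such configurations is precisely the content of \cite[Lemma 3.6]{HNT2024}, whose conclusion forces $v_1<i_2\le r$ and $u_2>n-r+j_1$, and it requires a finer analysis of the triangles than the one you outline. As written, your text is a plan with several unresolved cases rather than a proof; the intended argument is simply to quote the cited lemma and read off $v_1<u_2$.
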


\begin{proof}
    By \Cref{lem.RightTriangle}, there exists $(i_k,j_k)\in E(G_r)$ such that $(u_k,v_k)\in\Delta((i_k,j_k),n-r)$ for $k=1,2.$ Since $n\ge 3r$, we have $n-r\ge 2r\ge 2\max\{j_1,j_2\}.$ Moreover, none of the pairs $\{u_1,u_2\}$, $\{u_1,v_2\}$, $\{u_2,v_1\}$, $\{v_1,v_2\}$ is an edge of $G_n$ since $(u_1,v_1),(u_2,v_2)$ form an induced matching of $G_n$. So if we assume without loss of generality that $u_1<u_2$, then it follows from \cite[Lemma 3.6]{HNT2024} that
    \[
    v_1<i_2<n-r+j_1<u_2.
    \]
    Hence, $[u_1,v_1]\cap[u_2,v_2]=\emptyset$, as desired.
\end{proof}

We are now ready to prove the following.

\begin{lem}
\label{lem.noinducedCm>5}
Use \Cref{notn_chainofedgeids}. Then for $n\ge 3r$, $G_n$ has no induced cycle $C_m$ with $m\geq 6$.
\end{lem}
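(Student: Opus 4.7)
The plan is to argue by contradiction via \Cref{lem.K2}. Assume $G_n$ contains an induced cycle $C_m$ with $m \ge 6$, and let $v_1, v_2, \ldots, v_m$ denote its vertices in cyclic order, viewed as integer labels in $[n]$. After cyclically relabelling the cycle we may assume $v_1 = \min_i v_i$.

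The combinatorial heart of the proof is that for every $m \ge 6$ the cycle $C_m$ contains the following three induced matchings of size two:
\[
M_1 = \{\{v_1, v_2\}, \{v_{m-2}, v_{m-1}\}\}, \quad M_2 = \{\{v_m, v_1\}, \{v_3, v_4\}\}, \quad M_3 = \{\{v_2, v_3\}, \{v_{m-1}, v_m\}\}.
\]
That each $M_i$ is genuinely an induced matching is a short index-arithmetic check: for $m \ge 6$, no pair of the four indices appearing in any $M_i$ is cyclically adjacent modulo $m$ beyond the two pairs already listed (for $m \le 5$, chords such as $\{v_1, v_{m-1}\}$ or $\{v_3, v_m\}$ do appear, which is why the statement is not claimed for $C_5$). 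Applying \Cref{lem.K2} to $M_1$ gives two disjoint integer intervals; since $v_1$ is the global minimum, $[v_1, v_2]$ must be the one on the left, so $v_2 < \min(v_{m-2}, v_{m-1})$, and in particular $v_2 < v_{m-1}$. Applying \Cref{lem.K2} to $M_2$ analogously yields $v_m < v_3$.

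Finally, \Cref{lem.K2} applied to $M_3$ forces the intervals spanned by $\{v_2, v_3\}$ and $\{v_{m-1}, v_m\}$ to be disjoint. If the $\{v_{m-1}, v_m\}$-interval were the one on the left, then $v_{m-1} < v_2$, contradicting the inequality $v_2 < v_{m-1}$ obtained from $M_1$. Hence the $\{v_2, v_3\}$-interval lies on the left, giving $v_3 < v_m$, which contradicts the inequality $v_m < v_3$ from $M_2$. The only point that requires care is verifying that $M_1, M_2, M_3$ are truly induced matchings in $C_m$, and this is exactly where the hypothesis $m \ge 6$ is used; the hypothesis $n \ge 3r$ enters only through the invocation of \Cref{lem.K2}, so no further ingredient is needed.
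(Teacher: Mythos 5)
Your proof is correct and takes essentially the same route as the paper's: the same three induced matchings of $C_m$ are fed into \Cref{lem.K2}, and the resulting interval constraints are shown to be mutually inconsistent. The only cosmetic difference is that the paper concludes by showing the two intervals of your $M_3$ must overlap (contradicting \Cref{lem.K2}), whereas you run the disjointness forced by $M_3$ forward to contradict the inequalities obtained from $M_1$ and $M_2$ --- logically the same argument.
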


\begin{proof}
Assume on the contrary that for some $n\ge 3r$, $G_n$ contains an induced cycle $C_m$ with $m\geq 6$. Label the vertices of $C_m$ as $u_1,\ldots,u_m$ such that $u_1=\min\{u_1,\ldots,u_m\}.$
For two real numbers $x,y$, let $(x,y)^{\le}$ be the ordered pair $(\min\{x,y\},\max\{x,y\})\in \R^2$, and $[x,y]^{\le}$ be the closed interval $[\min\{x,y\},\max\{x,y\}]\subseteq \R$.
Since $m\geq 6$, $\{(u_1,u_2),(u_{m-2},u_{m-1})^{\le}\}$ is an induced matching of $G_n$. Thus, $[u_1,u_2] \cap [u_{m-2},u_{m-1}]^{\le}=\emptyset$ by \Cref{lem.K2}. It follows that
$$
u_1<u_2< \min\{u_{m-2},u_{m-1}\}.
$$ 
Analogously, $u_1<u_m<\min\{u_3,u_4\}$ as $\{(u_1,u_m),(u_3,u_4)^{\le}\}$ is an induced matching of $G_n$. So if $u_2<u_m$, then $u_2<u_m<u_3$. Otherwise, if $u_m<u_2$, then $u_m<u_2<u_{m-1}$. In either case, it always holds that $[u_2,u_3]^{\le} \cap [u_{m-1},u_m]^{\le} \neq \emptyset$. Thus, $\{(u_2,u_3)^{\le},(u_{m-1},u_{m})^{\le}\}$ is not an induced matching of $G_n$ by \Cref{lem.K2}. This contradiction concludes the proof.
\end{proof}

For the graph $G_n\setminus N[n]$, which arises from the ideal $I_n:x_n$, a stronger result holds true.

\begin{lem}
\label{lem.GWeaklyChordal}
Using \Cref{notn_chainofedgeids}, assume that $p=r$. Then the following statements hold for all $n\ge 2r+1$.
\begin{enumerate}
    \item 
    $V(G_n\setminus N[n])=\{1\ldots,b-1\} \cup  \{n-r+B+1,\ldots,n-1\}$.
    \item 
    The graphs $G_n\setminus N[n]$ and $G_{n+1}\setminus N[n+1]$ are isomorphic.
    \item 
    $G_n\setminus N[n]$ is weakly chordal.
\end{enumerate}
\end{lem}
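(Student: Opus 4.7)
The plan is to carry out (i), (ii), (iii) in sequence, with \Cref{lem.RightTriangle} as the main technical tool translating edge membership into inequalities.

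\textbf{For (i),} by \Cref{lem.RightTriangle}, the edge $(k,n)$ lies in $E(G_n)$ for $k<n$ exactly when some $(i,j)\in E(G_r)$ satisfies $0\le k-i\le n-j\le n-r$. The last inequality forces $j\ge r$, hence $j=r$ since $p=r$, so $i\in\{i_t\mid j_t=r\}\subseteq[b,B]$, and the condition simplifies to $i\le k\le i+n-r$. Using $n\ge 2r+1$ so that $n-r\ge r+1>B-b$, the two extreme intervals $[b,b+n-r]$ and $[B,B+n-r]$ already union to $[b,B+n-r]$. Hence $N(n)=[b,B+n-r]$, yielding the claimed description of $V(G_n\setminus N[n])$.

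\textbf{For (ii),} define the shift $\phi\colon V(G_n\setminus N[n])\to V(G_{n+1}\setminus N[n+1])$ by $\phi(k)=k$ for $k\le b-1$ and $\phi(k)=k+1$ for $k\ge n-r+B+1$, which is a bijection by (i). To verify edge-preservation, split into three cases (low-low, high-high, low-high) and apply \Cref{lem.RightTriangle}. The low-low case carries because $v-j\le b-2\le n-r$; the high-high case because $u\ge n-r+B+1>r\ge i$, so $u+1-i\ge 0$ transfers cleanly. The subtle case is low-high $(u,v)\leftrightarrow(u,v+1)$: the same $(i,j)\in E(G_r)$ works in the forward direction, while for the reverse the only potential obstruction is the boundary equality $u-i=v+1-j$, which would force $(u,v+1)=(i+s,j+s)$ with $s\ge B+3$ (from $v\ge n-r+B+1$ and $n\ge 2r+1$) but also $s\le b-2$ (from $u\le b-1$ and $i\ge 1$), contradicting $b\le B$.

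\textbf{For (iii),} by (ii) we may assume $n\ge 3r$. Then \Cref{lem.noinducedCm>5} guarantees that $G_n$, and therefore its induced subgraph $G_n\setminus N[n]$, contains no induced $C_m$ with $m\ge 6$. Weak chordality then reduces to ruling out (a) induced $C_5$ in $G_n\setminus N[n]$ (which also handles $C_5$ in the complement, since $C_5$ is self-complementary) and (b) induced $C_m$ with $m\ge 6$ in the complement. Partition $V(G_n\setminus N[n])=L\cup H$ and classify edges as low-low, high-high, or low-high. By \Cref{lem.RightTriangle}, low-low edges are precisely the ``upper-right shadow'' of $E(G_r[L])$, satisfying the monotonicity that $(u,v)\in E$ implies $(u,v')\in E$ for every $v\le v'\le b-1$; high-high edges satisfy a symmetric monotonicity; and low-high edges form a chain bipartite graph (adjacency monotone in both coordinates). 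Using these structural features, any induced $C_5$ can be excluded by case analysis on the distribution of its vertices between $L$ and $H$: in the all-low or all-high case, the maximum-valued vertex would be forced by monotonicity to have more than two neighbors in the cycle, and in mixed cases the chain bipartite structure forces a chord. For (b), the corresponding induced $\overline{C_m}$ in $G_n\setminus N[n]$ is incompatible with the $L\cup H$ partition once \Cref{lem.K2} is invoked to restrict how induced matching edges can sit across the large gap between $L$ and $H$.

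The main obstacle will be the meticulous case analysis underlying part (iii), particularly the enumeration needed to exclude induced $C_5$'s straddling $L$ and $H$ and induced $\overline{C_m}$ with $m\ge 6$ in the complement, where the interplay of low-low, high-high, and low-high monotonicities must be orchestrated carefully.
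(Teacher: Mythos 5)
Parts (i) and (ii) of your proposal are correct. For (ii) you take a genuinely different route from the paper: you verify directly, via \Cref{lem.RightTriangle}, that the index shift $\phi$ preserves edges and non-edges, with a careful treatment of the boundary case $u-i=v+1-j$ in the low--high situation. The paper instead argues ideal-theoretically: it writes $I_n:x_n=\langle x_b,\dots,x_{n-r+B}\rangle+L_n$, uses the fact that $(I_n:x_n)_{n\ge1}$ is an $\Inc$-invariant chain of index at most $r+1$, and applies \Cref{lem_decomposition} with $\Lambda=\{b,\dots,b+r+1\}$ to conclude $L_{n+1}=\sigma_b(L_n)$. Your version is more elementary and self-contained; the paper's is shorter because it reuses the chain machinery. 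Either works.

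Part (iii) has a genuine gap. To show $G_n\setminus N[n]$ is weakly chordal you must exclude induced cycles $C_m$ with $m\ge 5$ in the \emph{complement}, equivalently induced copies of $C_m^c$ in $G_n\setminus N[n]$. Your plan for $m\ge 6$ is to invoke \Cref{lem.K2} ``to restrict how induced matching edges can sit across the gap between $L$ and $H$,'' but $C_m^c$ contains \emph{no} induced matching of size $2$ for any $m\ge 5$: an induced matching $\{a,b\},\{c,d\}$ in $C_m^c$ would force all four cross pairs to be edges of $C_m$, i.e.\ $c,d$ would be the two $C_m$-neighbors of both $a$ and $b$, which is impossible. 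So \Cref{lem.K2} yields no information here, and this case of your argument does not get off the ground. (Your exclusion of induced $C_5$'s in $G_n\setminus N[n]$ itself via the up-set/down-set/staircase monotonicities is only sketched but looks completable; it is the complementary long cycles that are the real issue.) The paper closes exactly this gap by citing \cite[Proposition 4.1]{HNT2024}, which says $G_n^c$ (hence its induced subgraph $(G_n\setminus N[n])^c$) has no induced $C_m$ for $5\le m\le n/r$, and then choosing $n\ge r(r-B+b-2)$ so that $n/r$ exceeds the number of vertices of $G_n\setminus N[n]$; part (ii) then transports the conclusion down to all $n\ge 2r+1$. If you want to avoid that external input, you would need a genuinely new argument for induced $C_m^c$, $m\ge 6$, not one routed through \Cref{lem.K2}.
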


\begin{proof}
(i) Since $2r+1\ge r+B-b$, it suffices to show that 
\[
N(n)=\{b,b+1,\ldots,n-r+B\}
\quad \text{for } n\ge r+B-b.
\]
Let $k\in N(n)$. Then $(k,n)\in E(G_n)$. By \Cref{lem.RightTriangle}, there exists $(i,j)\in E(G_r)$ such that $(k,n)\in \Delta((i,j),n-r)$, i.e.
\[
0\le k-i\le n-j\le n-r.
\]
This implies $i\le k\le n-j+i$ and $j\ge r$. Since $j\le r$, we get $j=r$ and thus $(i,r)\in E(G_r)$. Note that $p=r$. So the definition of $b$ and $B$ in \Cref{notn_chainofedgeids} gives $b\le i\le B$. Hence, $b\le i\le k \le n-r+i\le n-r+B$ and therefore $k\in \{b,b+1,\ldots,n-r+B\}$.

Conversely, if $b\le k\le n-r+B$, we see that $(k,n)\in \Delta((b,r),n-r)$ if $b\le k\le n-r+b$ and  $(k,n)\in \Delta((B,r),n-r)$ if $B\le k\le n-r+B$.  As $n\ge r+B-b$, we deduce $k\in N(n)$, as desired.

(ii) Denote $F_n=G_n\setminus N[n]$. For $n\ge 2r+1$, it follows from (i) that
\begin{equation}
\label{I_n:x_n}
I_n:x_n
=\langle x_b,x_{b+1},\ldots,x_{n-r+B}\rangle +L_n,
\end{equation}
where $L_n\subseteq R_n$ is an edge ideal supported on $V(F_n)$. Moreover, $L_n$ is exactly the edge ideal of $F_n$ if it is viewed as an ideal in the ring $\kk[x_i\mid i\in V(F_n)]$. Recall the map $\sigma_k$ defined in \eqref{eq.sigma}. It is apparent that $\sigma_b$ induces a bijective map from $V(F_n)$ to $V(F_{n+1})$. We show that this map is a graph isomorphism between $F_n$ and $F_{n+1}$ for all $n\ge 2r+1$. In other words, we need to prove that $L_{n+1}=\sigma_b(L_n)$, or equivalently,
\[
I_{n+1}:x_{n+1}=\langle x_b,x_{b+1},\ldots,x_{n-r+B+1}\rangle +\sigma_b(L_n)
\quad
  \text{for all } n\ge 2r+1.
\]
Indeed, it follows from \cite[Lemma 4.7]{LN22} that the chain $(I_n:x_n)_{n\ge 1}$ is $\Inc$-invariant with stability index at most $r+1$. So by \Cref{lem_decomposition},
\begin{equation}
\label{sigmaI_n:x_n}
I_{n+1}:x_{n+1}=\sum_{k\in \Lambda}\sigma_k(I_n:x_n)
  \quad
  \text{for all } n\ge r+1,
\end{equation}
where $\Lambda$ is any subset of $\{0,1,\dots,n+1\}$ with $|\Lambda|=r+2$. Choose 
\[
\Lambda=\{b,b+1,\ldots,b+r+1\}.
\]
Then $\Lambda\subseteq\{b,b+1,\ldots,n-r+B\}$ for all $n\ge 2r+1$. Take any $x_ux_v\in L_n$ with $u<v$. Since $u,v\in V(F_n)$, only the following cases can occur.

\emph{Case 1}: $u<v< b$. In this case, $\sigma_k(x_ux_v)=x_ux_v$ for all $k\in \Lambda$.

\emph{Case 2}: $u< b<n-r+B<v$. In this case, $\sigma_k(x_ux_v)=x_ux_{v+1}$ for all $k\in \Lambda$.

\emph{Case 3}: $n-r+B<u<v.$ In this case, $\sigma_k(x_ux_v)=x_{u+1}x_{v+1}$ for all $k\in \Lambda$.

It follows that $\sigma_k(L_n)=\sigma_b(L_n)$ for all $k\in \Lambda$. Thus from \eqref{I_n:x_n} and \eqref{sigmaI_n:x_n} we get
\begin{align*}
   I_{n+1}:x_{n+1}
   &=\sum_{k=b}^{b+r+1}\sigma_k\big(\langle x_b,x_{b+1},\ldots,x_{n-r+B}\rangle +L_n\big)\\
   &=\langle x_b,x_{b+1},\ldots,x_{n-r+B+1}\rangle +\sum_{i=b}^{b+r+1}\sigma_i(L_n)
   \\
   &=\langle x_b,x_{b+1},\ldots,x_{n-r+B+1}\rangle +\sigma_b(L_n)
\end{align*}
for all $n\ge 2r+1$, as wanted. 
The desired assertion follows.

(iii) In view of (ii), it is enough to show that $F_n$ is weakly chordal for some $n\gg0$. Let $n\ge \max \{5r, r(r-B+b-2) \}$. Since $F_n$ is an induced subgraph of $G_n$, it follows from \Cref{lem.noinducedCm>5} that $F_n$ contains no induced cycle $C_m$ for $m\ge 6$. By \cite[Proposition 4.1]{HNT2024}, the complement $F_n^c$ also contains no induced cycle $C_m$ for $5\le m \le {n}/{r}$. As
\[
\frac{n}{r}\ge r-B+b-2= |V(F_n)| = |V(F_n^c)|,
\]
$F_n^c$ has no induced cycle $C_m$ for all $m\ge 5$. 
This implies that $F_n$ also has no induced $C_5$ since $C_5^c=C_5$. Therefore, $F_n$ is weakly chordal. The proof is complete.
\end{proof}

\begin{rem}
    By index shift, \Cref{lem.GWeaklyChordal} is valid even without the assumption that $p=r$. In this case, the graph $G_n\setminus N[n]$ would have to be replaced by $G_n\setminus N[n-r+p]$ and some indices in the result would have to be modified accordingly. The details are left to the reader.
\end{rem}

We conclude this section with a strengthened version of \cite[Proposition 7.4]{HNT2024}, in which the index of regularity stability is slightly reduced. 

\begin{lem}
\label{lem.reg1}
Use \Cref{notn_chainofedgeids}. If $j_q=p$, i.e. $x_{i_1}x_p\in I_r$, then
\[
\reg I_n =2
\quad\text{for all } n\ge 3r-3.
\]
\end{lem}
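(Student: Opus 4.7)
The plan is to invoke Fröberg's theorem. Since $I_n = I(G_n)$ is a nonzero edge ideal generated in degree $2$, one has $\reg I_n \ge 2$ automatically. By Fröberg, equality $\reg I(G_n) = 2$ holds if and only if the complement $G_n^c$ is chordal, i.e., has no induced cycle $C_m$ for any $m \ge 4$. So the task reduces to verifying chordality of $G_n^c$ for $n \ge 3r-3$ under the hypothesis $(i_1,p) \in E(G_r)$.

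For induced cycles of length $\ge 5$, this is already within reach of the techniques of \cite{HNT2024}; in particular, \cite[Proposition 4.1]{HNT2024} rules out induced $C_m$ in $G_n^c$ for $5 \le m \le n/r$, and \Cref{lem.noinducedCm>5} combined with the self-complementarity $C_5 = C_5^c$ handles the $m=5$ case. Under the additional hypothesis $j_q = p$, which places the extremal edge $(i_1, p)$ at the ``corner'' of the index diagram and so provides the largest possible triangle $\Delta((i_1, p), n-r)$ of edges, I would verify that these bounds can be tightened to cover all $n \ge 3r - 3$.

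The key new step is to exclude an induced $C_4$ in $G_n^c$, equivalently, to show that the induced matching number satisfies $\inm(G_n) \le 1$. Suppose for contradiction that $\{(a,c),(b,d)\}$ is an induced matching in $G_n$ with $a<b$. By \Cref{lem.RightTriangle}, each edge lies in some triangle $\Delta((i,j), n-r)$ with $(i,j) \in E(G_r)$. The hypothesis $(i_1, p) \in E(G_r)$ provides the extremal triangle $\Delta((i_1,p), n-r)$, whose legs occupy the widest possible horizontal and vertical index ranges. Using this triangle together with (a slight strengthening of) the short-moves property from \Cref{lem.shortmoves}, I would produce an edge of $G_n$ among the four vertices $\{a,b,c,d\}$, contradicting the induced-matching assumption. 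The analogue of \Cref{lem.K2}, which forces $a<c<b<d$ with $[a,c] \cap [b,d] = \emptyset$, is a useful intermediate step to constrain the possible configurations.

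The main obstacle is that \Cref{lem.shortmoves} and \Cref{lem.K2} are stated only for $n \ge 3r$, so the reduction to $n \ge 3r-3$ requires trimming these bounds by three units. The savings comes precisely from the hypothesis $j_q = p$: because $(i_1,p)$ sits at the extreme corner, the triangle $\Delta((i_1,p), n-r)$ has its apex pushed all the way out, giving three extra rows/columns of edges to work with. The most delicate part will therefore be a careful case analysis at the boundary values $n \in \{3r-3, 3r-2, 3r-1\}$, verifying that the short-moves arguments still deliver the required edge despite the shortened range.
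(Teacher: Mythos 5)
Your overall frame (Fr\"oberg's theorem, hence reduce to chordality of $G_n^c$) is the same as the paper's, but your plan for establishing chordality has a genuine gap for induced cycles of length at least $5$ in $G_n^c$. Your key new step, $\inm(G_n)\le 1$, only disposes of induced $C_4$'s in $G_n^c$: since $C_5$ and more generally $C_m^c$ for $m\ge5$ contain no induced $2K_2$ (any four vertices of $C_m$, $m\ge5$, span at most three edges of $C_m$), absence of an induced $2K_2$ in $G_n$ excludes no induced $C_m$ in $G_n^c$ with $m\ge 5$. The tools you cite for those cycles do not cover the required range: \cite[Proposition 4.1]{HNT2024} excludes induced $C_m$ in $G_n^c$ only for $5\le m\le n/r$, and for $3r-3\le n<5r$ this range is \emph{empty}, so it yields nothing at precisely the values of $n$ the lemma concerns; it is also silent on $m>n/r$. (This is why the paper can deploy it in \Cref{lem.GWeaklyChordal}(iii), where the graph in question has only about $r$ vertices so that $n/r$ eventually exceeds the vertex count --- an option unavailable for $G_n$ itself, which has $n$ vertices.) Finally, the claim that \Cref{lem.noinducedCm>5} together with $C_5=C_5^c$ ``handles the $m=5$ case'' is backwards: that lemma excludes induced cycles of length at least $6$ in $G_n$ and says nothing about an induced $C_5$ in $G_n$, which is exactly what an induced $C_5$ in $G_n^c$ produces. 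So as written, cochordality of $G_n$ is not established even for $n\ge 3r$, let alone $n\ge 3r-3$; the boundary analysis you flag as the ``main obstacle'' is in fact the lesser one.

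For contrast, the paper's proof is a one-paragraph citation: by Fr\"oberg one must show $G_{n+r}$ is cochordal for $n\ge 2r-3$, and the proof of \cite[Proposition 7.4]{HNT2024} already shows that failure of cochordality forces $n<r+j_q-i_q-j_k$ for some $k$, which under $j_q=p$ gives $n<2r-3$, a contradiction. No separate case analysis of $C_4$'s versus longer cycles is needed; the entire content lies in that earlier proposition, whose analysis your proposal would essentially have to re-derive from scratch to close the gap above.
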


\begin{proof}
The result actually follows implicitly from the proof of \cite[Proposition 7.4]{HNT2024}. We just need to tighten an inequality in that proof. Indeed, 
by  Fr\"oberg's theorem \cite[Theorem 1]{Fr90}, we have to show that $G_{n+r}$ is \emph{cochordal} (i.e. $G_{n+r}^c$ is chordal) for all $n\ge 2r-3$. If $G_{n+r}$ is  not cochordal for some $n\ge 2r-3$, then it is shown in the proof of \cite[Proposition 7.4]{HNT2024} that there exists some $k\in [s]$ such that
\[
n<r+j_q-i_q-j_k,
\]
which yields $n<2r-3$ since $j_q\le r$, $i_q\ge1$ and $j_k>i_k\ge1$. This contradiction concludes the proof.
\end{proof}


\section{Sparsity index and bounds for the asymptotic depth}
\label{sec.Bound}

This section can be seen as the starting point for the proof of  \Cref{thm_limdepth_main}, which will be completed in \Cref{sec.MaxDepth,sec.MinDepth}. We provide here upper and lower bounds for $\depth(R_n/I_n)$ when $n\gg 0$. As we will see in the subsequent sections, $\depth(R_n/I_n)$ always attains one of these bounds for $n\gg 0$. Throughout the section, we continue to use \Cref{notn_chainofedgeids}.

Let us first introduce the following crucial notion.

\begin{defn}
We call 
\[
\spi(I_r)=\min \{j-i \mid (i,j)\in E(G_r)\}
=\min \{j_t-i_t \mid 1\le t \le s\}
\]
the \emph{sparsity index} of $I_r$. 
\end{defn}

The $\Inc$-invariance of the chain $\Icc=(I_n)_{n\ge 1}$ implies that $\spi(I_n)=\spi(I_r)$ for all $n\ge r$. We can therefore set $$\spi(\Icc)\defas\spi(I_r)$$
and call this number the \emph{sparsity index} of $\Icc$. The nomenclature is justified by the observation that the bigger $\spi(\Icc)$ is, the fewer edges each graph $G_n$ may have: 

\begin{rem}
\label{rem.sparsityindandedges}
Let $n\ge r$ and $1\le i < j\le n$ be integers. If $(i,j)\in E(G_n)$, then it is evident that $j-i\ge \spi(I_n)=\spi(\Icc)$. Thus, $i$ and $j$ are not adjacent in $G_n$ whenever $j-i< \spi(\Icc)$.
\end{rem}

\begin{ex}
    The chain $\Icc$ in \Cref{ex_5cycle} has $\spi(\Icc)=2$. 
\end{ex}

The next result gives an upper bound for $\depth(R_n/I_n)$ when $n\gg 0$. Recall that 
$$\wti{r}=\Msupp(I_r)-\msupp(I_r)+1=p-i_1+1$$
is the stability index of the chain $\wti{\Icc}$ considered in \Cref{lem.normalizingindexshift}.

\begin{thm}
\label{thm_upperbound_limdepth}
For all $n\ge r+2\wti{r}$, it holds that
\[
\depth(R_n/I_n) \le r-\wti{r}+\spi(\Icc).
\]
\end{thm}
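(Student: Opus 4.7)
The plan is to apply \Cref{prop_SmallDepthCrit}, which reduces the inequality $\depth(R_n/I_n) \le r - \wti{r} + \spi(\Icc)$ to exhibiting an explicit subset $F \subseteq [n]$ with $|F| = r - \wti{r} + \spi(\Icc)$ and $\depth((R_n)_F/(I_n)_F) = 0$. First, I would invoke \Cref{lem.normalizingindexshift} to reduce to the normalized case $i_1 = 1$ and $p = r$ (so $\wti{r} = r$), in which the desired bound becomes $\depth(R_n/I_n) \le d$ with $d \defas \spi(\Icc)$ and $n \ge 3r$.

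Next, I would reformulate the depth-zero condition combinatorially. Since $I_n$ is a squarefree edge ideal with no linear generators, setting $x_i = 1$ for $i \in F$ yields $(I_n)_F = \langle x_j : j \in N(F) \setminus F\rangle + I(G_n \setminus N[F])$ in $(R_n)_F$, and modding out by the linear generators shows that $\depth((R_n)_F/(I_n)_F)$ equals the depth of the Stanley--Reisner ring of the induced subgraph $G_n \setminus N[F]$. The latter equals zero precisely when $F$ is independent in $G_n$ (so $(I_n)_F$ is proper) and $N[F] = [n]$ (so the residual edge ideal is the whole maximal ideal). In other words, it suffices to exhibit an independent dominating set of $G_n$ of size $d$.

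The construction I propose is $F = \{k, k+1, \ldots, k+d-1\}$ for a carefully chosen integer $k$. Independence is automatic from \Cref{rem.sparsityindandedges}, since any two elements of $F$ differ by at most $d - 1 < d$, hence cannot span an edge. For dominance, fix a sparsity-achieving edge $(i^*, j^*) \in E(G_r)$ with $j^* - i^* = d$ together with an edge $(i, r) \in E(G_r)$ ending at $p = r$. For each $w \in [n] \setminus F$, \Cref{lem.RightTriangle} allows us to produce an edge between $w$ and some element of $F$ by exhibiting a triangle $\Delta((i', j'), n - r)$ containing $(w, v)$ or $(v, w)$: for $w$ to the right of $F$, combine $\Delta((i^*, j^*), n - r)$ with $\Delta((i, r), n - r)$ to cover $[k + d, n]$; for $w$ to the left of $F$, use triangles $\Delta((1, j), n - r)$ coming from edges starting at $i_1 = 1$ together with the sparsity-$d$ triangle to cover $[1, k - 1]$.

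The main obstacle will be selecting $k$ so that these triangles cover every vertex of $G_n$ outside $F$; this requires careful bookkeeping, since the ``width'' of each triangle is $n - r$ and the various triangles must be patched together along $[1, n]$ without gaps. The hypothesis $n \ge r + 2\wti{r}$ (equivalently $n \ge 3r$ in the normalized setting) is precisely what guarantees sufficient room for this patching. Delicate configurations---such as long ``gaps'' between consecutive $i$-coordinates appearing in $E(G_r)$, which force the rightmost element of $F$ to meet a particular lower bound---may necessitate minor adjustments of $k$, and the distinction $j_q = r$ versus $j_q < r$ is likely to surface here, foreshadowing the dichotomy of \Cref{thm_limdepth_main}. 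Once the dominance is verified, \Cref{prop_SmallDepthCrit} gives $\depth(R_n/I_n) \le |F| = d$, as required.
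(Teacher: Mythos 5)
Your proposal follows essentially the same route as the paper: reduce to $i_1=1$, $p=r$ via \Cref{lem.normalizingindexshift}, apply \Cref{prop_SmallDepthCrit} to a block of $\spi(\Icc)$ consecutive indices (independent by \Cref{rem.sparsityindandedges}), and verify dominance by covering $[n]\setminus F$ with the triangles of \Cref{lem.RightTriangle} attached to an edge $(1,a)$, a sparsity-achieving edge, and an edge ending at $r$. The paper resolves the bookkeeping you leave open by taking $F=\{\alpha-m+1,\dots,\alpha\}$ with $\alpha=\max\{a+v,b\}\le 2r$, and no case split on $j_q$ is needed.
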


\begin{proof}
Define the chain $\wti{\Icc}$ as in \Cref{lem.normalizingindexshift}. Observe that $\spi(\Icc)=\spi(\wti{\Icc})$. So passing to the chain $\wti{\Icc}$ we may assume $i_1=1$, $p=r$, and in this case, what we need to show becomes
\[
\depth(R_n/I_n) \le \spi(\Icc)
\quad\text{for all }
n\ge 3r.
\]
Put $m=\spi(\Icc)$. By \Cref{prop_SmallDepthCrit}, it suffices to provide a subset $F\subseteq[n]$ with $|F|=m$ such that $\depth((R_n)_{F}/(I_n)_{F})=0$. Since $(I_n)_{F}$ is squarefree, the last condition is equivalent to $(I_n)_{F}=\mm_F$, where $\mm_F$ is the graded maximal ideal of $(R_n)_{F}$.
From the assumption that $i_1=1, p=r$ and $m=\spi(\Icc)$, we deduce that $E(G_r)$ contains (not necessarily distinct) edges of the forms $(1,a), (v-m,v), (b,r)$, where $m+1 \le a, v\le r$ and $1\le b\le r-m$. Let 
\[
F=\{\alpha,\alpha-1,\ldots,\alpha-m+1\}
\ \text{ with }\
\alpha=\max\{a+v,b\}\le 2r.
\]
We show that $F$ has the desired property for all $n\ge 3r$. Obviously, $|F|=m$. So it remains to prove that $(I_n)_{F}=\mm_F$. This is done through the following claims.

\begin{claim}
$(I_n)_{F}\subsetneq(R_n)_{F}$. 
\end{claim}

Indeed, if $(I_n)_{F}=(R_n)_{F}$, then $G_n$ must have an edge of the form $(k,l)$, where $k< l$ are both in $F$. But then $l-k\le m-1$, contradicting \Cref{rem.sparsityindandedges}.

\begin{claim}
    $(I_n)_{F}\supseteq \mm_F$, i.e. each vertex $k\in [n]\setminus F$ is adjacent to a vertex $l\in F$ in $G_n$.
\end{claim}

In fact, we have $[n]\setminus F=\{1,\dots,\alpha-m\} \cup  \{\alpha+1,\dots,n\}$. Using \Cref{lem.RightTriangle}, it suffices to show that for each $k\in [n]\setminus F$, there exists $l\in F$ such that either $(k,l)$ or $(l,k)$ belongs to one of the triangles $\Delta((1,a),n-r)$, $\Delta((v-m,v),n-r)$ and $\Delta((b,r),n-r).$ We distinguish the following cases.

\emph{Case 1}: $1\le k\le v-m$. Then it is clear that 
\[
0\le k-1\le \alpha-a\le n-r
\]
since $v\le \alpha-a$ and $\alpha\le 2r\le n-r$. Hence, $(k,\alpha)\in\Delta((1,a),n-r).$

\emph{Case 2}: $\alpha+r-b\le k\le n$. In this case, $(\alpha,k)\in\Delta((b,r),n-r)$ since 
\[
0\le \alpha-b\le k-r\le n-r.
\]

\emph{Case 3}: $v-m< k\le \alpha-m$ or $\alpha+1\le k< \alpha+r-b$. Set
\[
l=
\begin{cases}
    k-m & \text{if } \alpha+1\le k\le \alpha+m,\\
    \alpha & \text{otherwise}.
\end{cases}
\]
Then $l\in F$. Similarly to the previous cases, one can show that either $(k,l)$ (when $k<l$) or $(l,k)$ (when $k>l$) belongs to $\Delta((v-m,v),n-r)$. The details are left to the reader.
\end{proof}

Let us now provide a lower bound for $\depth(R_n/I_n)$ when $n\gg0$.

\begin{thm}
\label{thm_lowerbound_limdepth}
For all $n\ge r$, the following inequality holds
\[
\depth(R_n/I_n)\ge r-\wti{r}+\min\{\spi(\Icc), 2\}.
\]
\end{thm}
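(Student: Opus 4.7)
The plan is to first reduce to the normalized case where $\msupp(I_r) = 1$ and $\Msupp(I_r) = r$ (equivalently $\wti{r} = r$), via \Cref{lem.normalizingindexshift}. In this setting the asserted inequality becomes
\[
\depth(R_n/I_n) \ge \min\{\spi(\Icc),\, 2\} \quad \text{for all } n \ge r.
\]
The reduction is legitimate because an index shift leaves the difference $j - i$ of every edge $(i,j) \in E(G_r)$ unchanged, so $\spi(\wti{\Icc}) = \spi(\Icc)$, and by part (ii) of \Cref{lem.normalizingindexshift} the depth of $R_n/I_n$ and that of the shifted chain differ by precisely $r - \wti{r}$.

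In the normalized setting, I would split into two cases according to the value of $\spi(\Icc) \ge 1$. If $\spi(\Icc) = 1$, the goal is just $\depth(R_n/I_n) \ge 1$. Since $I_n$ is a squarefree monomial ideal minimally generated in degree two, the variable $x_1$ does not lie in $I_n$, and therefore $I_n \subsetneq \mm$. Consequently $\mm$ is not an associated prime of $R_n/I_n$ (equivalently, the independence complex $\IN(G_n)$ has at least one vertex), giving $\depth(R_n/I_n) \ge 1$.

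If instead $\spi(\Icc) \ge 2$, I would invoke \Cref{cor_depthge2crit}, which reduces the inequality $\depth(R_n/I_n) \ge 2$ to the connectedness of the complement graph $G_n^c$. By \Cref{rem.sparsityindandedges}, no pair of consecutive indices $\{i, i+1\}$ with $1 \le i \le n-1$ forms an edge of $G_n$, because $(i+1) - i = 1 < 2 \le \spi(\Icc)$. Therefore $1 - 2 - \cdots - n$ is a Hamiltonian path in $G_n^c$, so $G_n^c$ is connected, and the required bound follows. No substantive obstacle is anticipated; the only care needed is to track the index shift carefully during the initial reduction so that the correction term $r - \wti{r}$ lines up correctly with the statement being proved.
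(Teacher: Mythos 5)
Your proposal is correct and follows essentially the same route as the paper: reduce to $i_1=1$, $p=r$ via \Cref{lem.normalizingindexshift}, note that $\depth(R_n/I_n)\ge 1$ automatically since $I_n$ is a squarefree non-maximal ideal, and, when $\spi(\Icc)\ge 2$, apply \Cref{cor_depthge2crit} together with the observation that consecutive vertices are always joined in $G_n^c$, so $G_n^c$ is connected. No gaps.
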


\begin{proof}
Using \Cref{lem.normalizingindexshift}, we may assume that $i_1=1$ and $p=r$. In this case, we need to show that 
\[
\depth(R_n/I_n)\ge \min\{\spi(\Icc), 2\}
\quad\text{for all }
n\ge r.
\]
Since $I_n$ is a squarefree non-maximal ideal of $R_n$, it always holds that $\depth(R_n/I_n)\ge 1$. Hence, it suffices to prove that $\depth(R_n/I_n)\ge 2$ when $\spi(\Icc)\ge 2$, which we will assume from now. By \Cref{cor_depthge2crit}, we need to show that $G_n^c$ is connected for each $n\ge r$. Indeed, take two arbitrary vertices $k,l$ of $G_n^c$ with $k<l$. These vertices are joined by the edges $(k,k+1), (k+1,k+2),\ldots, (l-1,l)$, which all belong to $G_n^c$ since $\spi(\Icc) \ge 2$. Hence, $G_n^c$ is connected, as desired.
\end{proof}

As a direct consequence of \Cref{thm_upperbound_limdepth,,thm_lowerbound_limdepth} we obtain the following.

\begin{cor}
\label{cor.limdepth.sple2}
Assume that $\spi(\Icc)\le 2$. Then for all $n\ge r+2\wti{r}$, it holds that
\[
\depth(R_n/I_n)= r-\wti{r}+\spi(\Icc).
\]
\end{cor}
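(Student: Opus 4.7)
The plan is to obtain the equality immediately by combining the two main results of the section, \Cref{thm_upperbound_limdepth} and \Cref{thm_lowerbound_limdepth}, with the trivial observation that $\min\{\spi(\Icc),2\}=\spi(\Icc)$ whenever $\spi(\Icc)\le 2$. No new ingredient is needed, so the argument is essentially a one-line sandwich.

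More precisely, I would first invoke \Cref{thm_lowerbound_limdepth}, which is valid in the broader range $n\ge r$, to write
\[
\depth(R_n/I_n)\ \ge\ r-\wti{r}+\min\{\spi(\Icc),2\}\ =\ r-\wti{r}+\spi(\Icc),
\]
where the equality uses the standing hypothesis $\spi(\Icc)\le 2$. Then, under the more restrictive hypothesis $n\ge r+2\wti{r}$, I would invoke \Cref{thm_upperbound_limdepth} to obtain the matching upper bound
\[
\depth(R_n/I_n)\ \le\ r-\wti{r}+\spi(\Icc).
\]
Combining these two inequalities yields the asserted equality.

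There is essentially no obstacle: the hypothesis $\spi(\Icc)\le 2$ is precisely what is needed to remove the minimum in the lower bound so that it coincides with the upper bound. The range $n\ge r+2\wti{r}$ is inherited verbatim from \Cref{thm_upperbound_limdepth}, while \Cref{thm_lowerbound_limdepth} imposes only the weaker condition $n\ge r$, so the intersection of the two ranges is just $n\ge r+2\wti{r}$, matching the statement.
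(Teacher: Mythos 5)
Your proposal is correct and is exactly the paper's argument: the corollary is stated there as a direct consequence of \Cref{thm_upperbound_limdepth} and \Cref{thm_lowerbound_limdepth}, with the hypothesis $\spi(\Icc)\le 2$ collapsing the minimum in the lower bound. Nothing further is needed.
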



\section{Maximal asymptotic depth}
\label{sec.MaxDepth}

In this section we show that the upper bound given in \Cref{thm_upperbound_limdepth} is attained when $j_q=p$, where we use \Cref{notn_chainofedgeids} throughout as usual. The following result covers \Cref{thm_limdepth_main}(i).

\begin{thm}
\label{thm.limdepth.maximal}
Assume that $j_q=p$. Then for all $n\ge r+2\wti{r}$, one has
 \[
 \depth (R_n/I_n) =r-\wti{r}+\spi(\Icc).
 \]
\end{thm}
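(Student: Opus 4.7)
The upper bound is immediate from \Cref{thm_upperbound_limdepth}, so it suffices to prove the matching lower bound. I would first apply \Cref{lem.normalizingindexshift} to pass to the normalized chain, in which one may assume $i_1 = 1$ and $p = r$; under the hypothesis $j_q = p$ this gives $j_q = r$ (equivalently $x_1 x_r \in I_r$), so $\wti{r} = r$, and the task reduces to showing $\depth(R_n/I_n) \ge m$ with $m := \spi(\Icc)$ for all $n \ge 3r$.

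The strategy is to reinterpret this as a bound on projective dimension. Since $j_q = p$, \Cref{lem.reg1} gives $\reg(I_n) = 2$, so Fr\"oberg's theorem forces $G_n$ to be cochordal and in particular weakly chordal. Applying Kimura's formula \Cref{prop.pdim.weaklychordal} and the Auslander--Buchsbaum formula, $\depth(R_n/I_n) \ge m$ becomes
\[
\sum_{i=1}^g |V(\mathfrak{B}_i)| - g \;\le\; n - m
\]
for every strongly disjoint family $\mathfrak{B}_1, \dots, \mathfrak{B}_g$ of complete bipartite subgraphs of $G_n$.

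The core structural input is a single-family gap estimate: for any complete bipartite subgraph $K_{A, B}$ of $G_n$ with $A, B$ disjoint and nonempty, $|A| + |B| \le n - m + 1$. I would prove this by sorting $A \cup B = \{w_1 < \cdots < w_k\}$ and using that whenever $w_i$ and $w_{i+1}$ belong to different parts they are adjacent in $G_n$, so $w_{i+1} - w_i \ge m$ by definition of $\spi(\Icc)$; since at least one such ``color change'' must occur among the $k-1$ consecutive gaps, the span satisfies $w_k - w_1 \ge (k - 1) + (m - 1) = k + m - 2$, forcing $k \le n - m + 1$. This immediately settles the case $g = 1$.

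The \textbf{main obstacle} is the case $g \ge 2$. Here I would use the fact from \cite{HNT2024} that $\inm(G_n) \le 2$ for $n$ large, so only $g = 2$ needs treatment, together with the disjointness of the matching intervals $[u_i, v_i]$ from \Cref{lem.K2}. The hypothesis $j_q = r$ means $(1, r) \in E(G_r)$, whence \Cref{lem.RightTriangle} yields $(k, l) \in E(G_n)$ for all $1 \le k \le n - r + 1$ and $l \ge k + r - 1$. Requiring $(u_1, u_2), (u_1, v_2), (v_1, u_2), (v_1, v_2)$ all to be non-edges therefore confines the matching vertices $\{u_1, v_1, u_2, v_2\}$ to a window of length at most $r - 1$. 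A refinement of the gap argument, combined with \Cref{lem.shortmoves} to pin down vertices of $V(\mathfrak{B}_j)$ lying outside the window $[u_j, v_j]$, should yield the bound $|V(\mathfrak{B}_1)| + |V(\mathfrak{B}_2)| \le n - m + 2$, completing the proof. I expect the detailed combinatorial case analysis in this last step, especially the control of vertices that simultaneously dominate one side of a $\mathfrak{B}_j$ while avoiding all four matching endpoints, to be the technical heart of the argument.
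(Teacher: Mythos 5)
Your route is genuinely different from the paper's. The paper reduces to $\spi(\Icc)\ge 3$ via \Cref{cor.limdepth.sple2} and then works entirely through Takayama's formula: after the reductions of \Cref{lem.FacetsofDegComplex} and the vanishing supplied by $\reg(R_n/I_n)=1$, the lower bound $\depth(R_n/I_n)\ge m$ comes down to two combinatorial facts, namely that every maximal independent set of $G_n$ has size at least $m$ (\Cref{lem.reg2normal.indepsets}) and that $(G_n\setminus N[U])^c$ is connected for every independent set $U$ with $|U|\le m-2$ (\Cref{lem.reg2normal.connectedness}), both proved by short-move arguments. Your framing via Fr\"oberg's theorem, the formula of \Cref{prop.pdim.weaklychordal}, and Auslander--Buchsbaum is legitimate (cochordality does imply weak chordality here), and your $g=1$ gap estimate is correct and clean.

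The genuine gap is the case $g=2$, which you leave at the level of ``a refinement of the gap argument \dots should yield the bound.'' The refinement is not routine, because the mechanism driving the $g=1$ case breaks down: after sorting $V(\mathfrak{B}_1)\cup V(\mathfrak{B}_2)$, two consecutive vertices are forced to be at least $m$ apart only when they lie in opposite parts of the \emph{same} $\mathfrak{B}_j$, and the two families can interleave so that this never happens (for instance in the pattern $A_1,\dots,A_1,A_2,\dots,A_2,B_1,\dots,B_1,B_2,\dots,B_2$ no consecutive pair carries any distance constraint). So the required excess of $m-2$ over the trivial count $|V(\mathfrak{B}_1)|+|V(\mathfrak{B}_2)|\le n$ cannot be read off from consecutive gaps; it must be extracted globally from the $m$-separation of $A_j$ and $B_j$ together with the confinement of the four matching vertices to a window of width at most $r-2$, and you do not indicate how. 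That this step has real content is visible in the paper's own Appendix, where a Kimura-formula argument of exactly this shape is carried out -- but only for the much more rigid graphs $G_n(a,A)$ (two blocks of consecutive vertices at the ends of $[n]$), only to exclude families covering all, or all but one, of the vertices, and even that requires \Cref{lem.Gn.weakly.chordal,lem.consecutive,lem.Gn.no.contains.twocompletebipartite,lem.Gn.no.contains.onecompletebipartite} plus a double induction. Until you supply the $g=2$ estimate, the proof is incomplete.
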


To prove this theorem, we need some auxiliary results. We first give a lower bound for the size of a maximal independent set of the graph $G_n$ when $n\gg0$.

\begin{lem}
\label{lem.reg2normal.indepsets}
Assume that $j_q=p$. Then for all $n\ge r+2\wti{r}$, every maximal independent set of $G_n$ has size at least $ \spi(\Icc)$. 
\end{lem}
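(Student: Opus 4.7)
The plan is to argue by contradiction, using the hypothesis $j_q=p$ to force a bound on the span $u_k-u_1$ of any maximal independent set $U$, and then to exploit sparsity on a short interval around $u_1$ to extend $U$.

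First I would reduce to the normalized case. By \Cref{lem.normalizingindexshift}, the shifted chain $\wti\Icc$ has $\msupp(\wti I_{\wti r})=1$, $\Msupp(\wti I_{\wti r})=\wti r$, and the same sparsity index; moreover maximal independent sets of $G_n$ and $\wti G_{n-r+\wti r}$ differ only by the $r-\wti r$ vertices that are isolated in $G_n$, which must lie in every maximal independent set. Thus it suffices to prove: \emph{under the additional hypotheses $i_1=1$, $p=r$ and $(1,r)\in E(G_r)$, every maximal independent set of $G_n$ has size at least $m\defas\spi(\Icc)$ for $n\ge 3r$.}

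Let $U=\{u_1<u_2<\cdots<u_k\}$ be a maximal independent set, and suppose for contradiction that $k<m$. Since $(1,r)\in E(G_r)$, \Cref{lem.RightTriangle} gives that every pair $(a,b)\in[n]^2$ with $b-a\ge r-1$ is an edge of $G_n$, so independence of $U$ forces $u_k-u_1\le r-2$. Consider the interval $W=[u_1,u_1+m-1]$, which has exactly $m$ elements. By \Cref{rem.sparsityindandedges} all pairs in $W$ are non-edges of $G_n$, and since $|W|=m>k\ge|U\cap W|$ the set $W\setminus U$ is nonempty. If moreover $U\subseteq W$, i.e.\ $u_k\le u_1+m-1$, then any $v\in W\setminus U$ is non-adjacent to every element of $U$, contradicting maximality and finishing the argument.

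It remains to handle the case $u_k\ge u_1+m$. Set $U'=U\cap[u_1+m,n]$; for any $v\in W$ and $u_j\in U'$, non-adjacency of $(v,u_j)$ is automatic whenever $u_j-v<m$, so the question reduces to controlling the ``bad'' set $B_j=\{v\in W:(v,u_j)\in E(G_n)\}$. My plan is to show that one can choose $v\in W\setminus U$ lying outside $\bigcup_{u_j\in U'}B_j$, again contradicting maximality of $U$. The short east/short south moves of \Cref{lem.shortmoves}(i), together with the explicit description of the triangles $\Delta((i_t,j_t),n-r)$, show that each $B_j$ has the ``initial segment'' form $B_j=[u_1,\gamma_j]\cap W$ for some $\gamma_j$ depending on $u_j$ and on the shortest edge $(i^*,j^*)\in E(G_r)$ with $j^*-i^*=m$; concretely, a vertex $v\in W$ with $v<i^*$ cannot be adjacent to $u_j$ through the triangle of $(i^*,j^*)$, and longer edges of $G_r$ contribute only pairs of larger distance already covered. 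Combining this with the bound $u_k-u_1\le r-2$ from the first step, one obtains an upper bound on the right endpoint $\max_j\gamma_j$, and hence produces the required $v\in W\setminus U$ outside every $B_j$.

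The principal obstacle is the last step, namely the analysis of $\bigcup_j B_j$ when $k=m-1$ is as large as permitted and $u_k-u_1$ is close to $r-2$. In that regime sparsity on $W$ alone is not enough, and one must combine the short-moves monotonicity of the $B_j$'s with the exact position of the shortest edge $(i^*,j^*)$ and the inequality $u_k-u_1\le r-2$ to avoid all of them simultaneously. A symmetric candidate interval $W'=[u_k-m+1,u_k]$ together with short west/short north moves (\Cref{lem.shortmoves}(ii)--(iii)) provides a useful complementary source of candidate vertices $v$ when the ``left'' analysis from $W$ is exhausted.
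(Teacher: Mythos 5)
Your reduction to the normalized case and your first observation are fine: since $(1,r)\in E(G_r)$ when $j_q=p$, \Cref{lem.RightTriangle} does force $u_k-u_1\le r-2$, and the subcase $U\subseteq W=[u_1,u_1+m-1]$ is correctly dispatched by sparsity. But the main case $u_k\ge u_1+m$ is only a plan, and as you yourself flag, the plan has an unresolved obstacle; as written this is a genuine gap. Moreover, the structural claim you lean on is not justified and appears false in general: for a fixed $u_j$, the set of $v$ with $(v,u_j)\in E(G_n)$ is, by \Cref{lem.RightTriangle}, the union over edges $(i_t,j_t)\in E(G_r)$ of the intervals $[i_t,\,u_j-(j_t-i_t)]$ (when $u_j\le n-r+j_t$), and this union can have gaps when the $i_t$ are spread out (e.g.\ $E(G_r)\supseteq\{(1,r),(5,5+m)\}$ with $u_j-r+1<5$), so the sets $B_j$ need not be initial (or final) segments of $W$. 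The assertion that ``longer edges of $G_r$ contribute only pairs of larger distance already covered'' is exactly where the difficulty hides, and no argument is given for it.

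The paper closes the argument by a different and much shorter route, which you may want to compare with your own bound $u_k-u_1\le r-2$: writing $\alpha=\min U$ and $\beta=\max U$, one uses maximality of $U$ against the three independent $m$-windows $\{1,\dots,m\}$, $\{\beta-m+1,\dots,\beta\}$, $\{\alpha,\dots,\alpha+m-1\}$ to produce a vertex just inside one of these windows that is adjacent to some element of $U$; a short west move (\Cref{lem.shortmoves}(ii)) then forces $\alpha\le r-1$, and a symmetric argument with a short north move forces $\beta\ge n-r+1$. For $n\ge 3r$ this gives $0\le\alpha-1\le\beta-r\le n-r$, i.e.\ $(\alpha,\beta)\in\Delta((1,r),n-r)$ is an edge of $G_n$, contradicting independence outright (and, incidentally, contradicting your own inequality $\beta-\alpha\le r-2$). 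If you want to salvage your approach, proving the two claims $\alpha\le r-1$ and $\beta\ge n-r+1$ is the missing idea; the local analysis of the sets $B_j$ is not needed once these are in hand.
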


\begin{proof}
Set $m=\spi(\Icc)$. Using \Cref{lem.normalizingindexshift} we may assume that $i_1=1$ and $j_q=r$. In this case, it is enough to show that every maximal independent set of $G_n$ has size at least $m$ for all $n\ge 3r$.
Suppose to the contrary that $G_n$ has a maximal independent set $U$ of size at most $m-1$. Denote $\alpha=\min U$ and $\beta=\max U$. To derive a contradiction, let us prove the following claims.

\begin{claim}
    $\alpha\le r-1$.
\end{claim}

Indeed, consider the following sets of size $m$:
\begin{align*}
    V_1&=\{1,2,\ldots,m\},\\
    V_2&=\{\beta-m+1,\beta-m+2,\ldots,\beta\},\\
    V_3&=\{\alpha,\alpha+1,\ldots,\alpha+m-1\}.
\end{align*}
By \Cref{rem.sparsityindandedges}, $V_1$ is an independent set of $G_n$. Since $|U|<|V_1|$, it follows from the maximality of $U$ that $U\nsubseteq V_1$. This yields $\beta\ge m+1$ and thus $V_2\subseteq [n]$. Again by \Cref{rem.sparsityindandedges}, $V_2$ is an independent set of $G_n$.  Hence, $U\nsubseteq V_2$ due to the maximality of $U$. It follows that $\alpha\le \beta-m$, and consequently, $\alpha+m\le \beta\le n$. Thus,  $V_3\subseteq [n]$. Since $|U|<|V_3|$, there exists $i\in [m-1]$ such that $\alpha+i\notin U$. The maximality of $U$ implies that $U\cup \{\alpha+i\}$ is a dependent set of $G_n$. 
Therefore, $\{\alpha+i,u\}\in E(G_n)$ for some $u\in U$. Note that $|u-(\alpha+i)|\ge m$ by \Cref{rem.sparsityindandedges}. Since $\alpha \le u$ and $i\le m-1$, we must have $\alpha+i<u.$ 
Now if $\alpha\ge r$, then moving west from $(\alpha+i,u)$ to $(\alpha,u)$ using \Cref{lem.shortmoves}, we get $(\alpha,u)\in E(G_n)$. This contradicts the independence of $U$. Hence, $\alpha\le r-1$, as claimed.

\begin{claim}
    $\beta\ge n-r+1$.
\end{claim}

We argue similarly as above. Since $U\nsubseteq V_2$, there exists $j\in [m-1]$ such that $U\cup \{\beta-j\}$ is a dependent set of $G_n$. Thus, $\{\beta-j,v\}\in E(G_n)$ for some $v\in U$. Using  \Cref{rem.sparsityindandedges} together with the fact that $v\le \beta$ and $j\le m-1$, we also deduce that $v<\beta-j$.
If $\beta\le n-r$, then moving north from $(v,\beta-j)$ to $(v,\beta)$ using \Cref{lem.shortmoves}, we get $(v,\beta)\in E(G_n)$. This again contradicts the independence of $U$. Hence, $\beta\ge n-r+1$. 

\begin{claim}
\label{cl.Minmax}
    $(\alpha,\beta)\in E(G_n)$.
\end{claim}

The assumption that $i_1=1$ and $j_q=r$ implies $(1,r)\in E(G_r)$. So by \Cref{lem.RightTriangle}, it suffices to show that
$(\alpha,\beta)\in \Delta((1,r),n-r)$. Indeed, this follows from
\[
0\le \alpha-1 \le r-2 \le n-2r+1 \le \beta-r\le n-r,
\]
where the third inequality holds since $n\ge 3r$. 

\Cref{cl.Minmax} contradicts the independence of $U$ and thus completes the proof.
\end{proof}

The following technical lemma also plays a role in the proof of \Cref{thm.limdepth.maximal}.

\begin{lem}
\label{lem.reg2normal.connectedness}
Assume that $j_q=p$ and $\spi(\Icc)\ge 2$. Let $U$ be an independent set of $G_n$ of size at most $\spi(\Icc)-2$. Then the complement of $G_n\setminus N[U]$ is connected for all $n\ge r+2\wti{r}$.
\end{lem}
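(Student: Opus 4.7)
The plan is to first apply \Cref{lem.normalizingindexshift} to reduce to the normalized setting where $i_1 = 1$ and $p = r$; under this normalization the hypothesis $j_q = p$ becomes $(1,r) \in E(G_r)$. Writing $m := \spi(\Icc) \ge 2$ and $t := |U| \le m - 2$, the case $t = 0$ is immediate from the proof of \Cref{thm_lowerbound_limdepth}, which already shows that $G_n^c$ is connected. I would therefore assume $t \ge 1$ (so $m \ge 3$) and denote $U = \{u_1 < u_2 < \cdots < u_t\}$.

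The key local observation, from \Cref{rem.sparsityindandedges}, is that every edge of $G_n$ joins two vertices whose indices differ by at least $m$. Consequently, any two vertices of $[n] \setminus N[U]$ at index-distance less than $m$ are non-adjacent in $G_n$ and hence adjacent in $(G_n \setminus N[U])^c$. This implies that each maximal interval of consecutive integers inside $[n] \setminus N[U]$ (a ``chunk'') forms a clique in the complement, and that any two chunks separated by a gap of fewer than $m$ consecutive integers of $N[U]$ are linked in the complement. So local connectedness within and across narrow gaps is automatic.

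The main content of the lemma is to bridge potentially wide gaps in $N[U]$. I would pick a pivot vertex $v^* \in [n] \setminus N[U]$ in a convenient central range (whose existence should follow from $n \ge r + 2\wti{r}$ together with $|U| \le m - 2$), and then show that every $k \in [n] \setminus N[U]$ is connected to $v^*$ in $(G_n \setminus N[U])^c$ by a path of length at most two. For this I would use \Cref{lem.RightTriangle} to describe each neighborhood $N[u_i]$ precisely, then apply \Cref{lem.shortmoves} (short east, west, north, south moves) to find intermediate vertices outside $N[U]$ that are non-adjacent in $G_n$ to both endpoints of the prospective bridge.

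The main obstacle I expect is exactly this bridging step. The difficulty is that the edge $(1,r) \in E(G_r)$ forces each $u_i$ to be adjacent in $G_n$ to long intervals at both ends of $[n]$, so $N[U]$ can be a union of several extended blocks. Controlling how these blocks interact and proving that the bridging vertices survive in $[n] \setminus N[U]$ requires a case analysis on whether $u_i$ lies near the left end, in the middle, or near the right end of $[n]$. The essential input making everything go through is the strict inequality $|U| \le m - 2 < m$: after removing at most $m - 2$ ``central obstructing'' vertices together with their long-range neighbors, there is still enough room in $[n]$ to locate the pivot and all the required bridge vertices inside $[n] \setminus N[U]$.
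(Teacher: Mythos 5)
Your proposal sets up the problem correctly (normalization, the $t=0$ case, the observation via \Cref{rem.sparsityindandedges} that maximal intervals of consecutive integers in $[n]\setminus N[U]$ are cliques in the complement and that two such vertices at distance less than $m$ are automatically adjacent there), but it stops exactly where the lemma actually begins. The entire content of the statement is the ``bridging step'' across wide blocks of $N[U]$, and you explicitly defer it: you do not prove that your central pivot $v^*$ exists outside $N[U]$, and you do not exhibit the length-two paths from an arbitrary $k\in[n]\setminus N[U]$ to $v^*$. In fact the existence of a single pivot at bounded distance from everything is a strong claim that is not obviously true here, since under $j_q=p$ the vertex $1$ (and symmetrically the large-index vertices) is adjacent in $G_n$ to essentially the whole interval $[r,n]$, so $G_n^c$ is quite sparse near the ends of $[n]$; establishing a diameter-two statement would itself require the same delicate case analysis you are postponing. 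As written, the proposal is a plan with the decisive argument missing.

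For comparison, the paper avoids any global pivot. It supposes $(G_n\setminus N[U])^c$ is disconnected and chooses vertices $i<j$ in different components with $j-i$ minimal; minimality forces $(i,j)\in E(G_n)$ (so $j-i\ge m$) and $\{i+1,\dots,j-1\}\subseteq N[U]$. Since $|U|=d\le m-2$, among any $d+1\le m-1\le j-i-1$ consecutive positions just inside the gap (near $j$ or near $i$) some position lies in $N(U)\setminus U$, hence is adjacent in $G_n$ to some $u\in U$; the short-move lemma (\Cref{lem.shortmoves}) then slides that edge to conclude $j\in N[U]$ or $i\in N[U]$, a contradiction, with a two-case analysis according to whether $i>\min U$ or $i<\min U$ and auxiliary bounds such as $\min U\le r-1$ and $j\le n-r$ coming from the edge $(1,r)\in E(G_r)$. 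This pigeonhole-plus-short-moves mechanism is the idea your outline is missing; without it (or a worked-out substitute) the proof is incomplete.
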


\begin{proof}
Using \Cref{lem.normalizingindexshift} we may assume that $i_1=1$ and $j_q=r$. In this case, we need to show that the graph 
$F_n\defas (G_n\setminus N[U])^c=G_n^c\setminus N[U]$
is connected for all $n\ge 3r$. Suppose to the contrary that $F_n$ is not connected for some $n\ge 3r$. Let $i<j$ be vertices of $F_n$ that are not joined by a path in $F_n$ with $j-i$ being minimal. Then in particular, $(i,j)\in E(G_n)$ since $(i,j)\not\in E(G_n^c)$. Moreover, any $k\in [n]$ with $i<k<j$ is not a vertex of $F_n$ due to the minimality of $j-i$. Thus, \begin{equation}
\label{eq.boundingj}
\{i+1,\dots,j-1\}\subseteq N[U].
\end{equation}
Denote $m=\spi(\Icc)$, $d=|U|$, $\alpha=\min U$, and $\beta=\max U$. Then it is clear that $\beta-\alpha\ge d-1$. Also, $j-i\ge m$ as $(i,j)\in E(G_n)$. 
We distinguish two cases.

\emph{Case 1}:  $i>\alpha$. A contradiction will be derived from the following claims.

\begin{claim}
    \label{cl.alphaj}
    $\alpha \le r-1$ and $j\le n-r.$
\end{claim}

If $\alpha\ge r$, then moving west from $(i,j)$ to $(\alpha, j)$ using \Cref{lem.shortmoves}, we get $(\alpha, j)\in E(G_n)$, contradicting the fact that $j\notin N[U]$. Hence, $\alpha\le r-1$. By assumption, $(1,r)\in E(G_r)$. So \Cref{lem.RightTriangle} yields $(\alpha,j)\notin \Delta((1,r),n-r)$, i.e. at least one of the inequalities
\[
0\le \alpha -1 \le j-r \le n-r
\]
is not true. We see that the middle inequality must be false. Thus
\[
j< \alpha-1+r \le 2r-2\le n-r,
\]
where the last inequality follows from $n\ge 3r$. 

\begin{claim}
    \label{cl.betaj}
    $\beta<j$. 
\end{claim}

Suppose that $j<\beta$. If $\beta\le n-r$, then moving north from $(i,j)$ to $(i,\beta)$ using \Cref{lem.shortmoves}, we deduce $(i,\beta)\in E(G_n)$. This contradicts the fact that $i\notin N[U]$. On the other hand, if $\beta\ge n-r+1$, then the hypothesis $n\ge 3r$ and the inequality $\alpha\le r-1$ from Claim 1 give
\[
0\le \alpha-1 \le r-2 < n-2r+1 \le \beta-r \le n-r.
\]
This implies $(\alpha,\beta)\in \Delta((1,r),n-r)$, contradicting the independence of $U$.

\begin{claim}
\label{cl.uj}
    $(u,j-h)\in E(G_n)$ for some $u\in U$ and $h\in [d+1]$.
\end{claim}

As $|U|=d$, there exists $h\in [d+1]$ such that $j-h\notin U$. Since $d+1\le m-1\le j-i-1$, it is easily seen from \eqref{eq.boundingj} that $j-h\in N[U]$. Thus, $\{u,j-h\}\in E(G_n)$ for some $u\in U$. This implies $|(j-h)-u|\ge m$. Since $u\le \beta<j$ and $h\le d+1\le m-1$, it must hold that $u<j-h$. Therefore, $(u,j-h)\in E(G_n)$.

Let us now derive a contradiction. Since $j\le n-r$ and $(u,j-h)\in E(G_n)$, it follows from \Cref{lem.shortmoves}(iii) that $(u,j)\in E(G_n)$. This contradicts the fact that $j\notin N[U]$, as wanted.

\emph{Case 2}: $i<\alpha$. Let us first prove the following claims.

\begin{claim}
\label{cl.iu}
    $(i+h,u)\in E(G_n)$ for some $u\in U$ and $h\in [d+1]$.
\end{claim}

We argue analogously to the proof of \Cref{cl.uj}. By \eqref{eq.boundingj} and the fact that $d+1\le j-i-1$, we may choose $h\in [d+1]$ such that $i+h\in N[U] \setminus U.$ This implies $\{i+h,u\}\in E(G_n)$ for some $u\in U$. We must have $i+h<u$ since $|(i+h)-u|\ge m$, $i<\alpha\le u$ and $h\le d+1\le m-1$. Hence, $(i+h,u)\in E(G_n)$.

\begin{claim}
\label{cl.alphav}
    If $\alpha+d+1<j$, then $(\alpha+k,v)\in E(G_n)$ for some $v\in U$ and $k\in [d+1]$.
\end{claim}

From the assumption $\alpha+d+1<j$ it follows that $\alpha+k\in N[U]$ for all $k\in [d+1]$. The rest of the argument is the same as in the proof of \Cref{cl.iu} and is omitted.

\begin{claim}
\label{cl.wj}
    If $\beta<j$, then $(w,j-l)\in E(G_n)$ for some $w\in U$ and $l\in [d+1]$.
\end{claim}
 
The argument is the same as in the proof of \Cref{cl.uj} and is omitted.

Let us now derive a contradiction. If $i\ge r$, then using \Cref{cl.iu} and \Cref{lem.shortmoves}(ii) we infer that $(i,u)\in E(G_n)$. This contradicts the fact that $i\notin N[U]$. Hence, $i\le r-1$. Notice that $(i,\beta)\notin \Delta((1,r),n-r)$ by \Cref{lem.RightTriangle}. So arguing as in the proof of \Cref{cl.alphaj} yields
$$\beta< i+r-1 \le 2r-2\le n-r.$$
If $j< \beta$, then moving north from $(i,j)$ to $(i,\beta)$ using \Cref{lem.shortmoves}, we get the contradiction that $(i,\beta)\in E(G_n)$. Thus, $\beta<j$. If $j\le n-r$, then \Cref{cl.wj} together with \Cref{lem.shortmoves}(iii) implies that $(w,j)\in E(G_n)$, again a contradiction. Hence, $j\ge n-r+1$. As $(\alpha,j)\notin E(G_n)$, also $(\alpha,j)\notin \Delta((1,r),n-r)$, and it follows that  $\alpha > j-r+1 \ge n-2r+2 >  r$. Moreover, $\alpha+d+1<j$ since
\[
\alpha+d-1\le \beta<2r-2\le (n-r+1)-3\le j-3.
\]
So using \Cref{cl.alphav} and \Cref{lem.shortmoves}(ii) we deduce that $(\alpha,v)\in E(G_n)$. This contradiction concludes the proof.
\end{proof}

We are now ready to prove \Cref{thm.limdepth.maximal}.

\begin{proof}[Proof of \Cref{thm.limdepth.maximal}]
By \Cref{cor.limdepth.sple2}, it suffices to consider the case $m\defas\spi(\Icc)\ge 3$. Moreover, using \Cref{lem.normalizingindexshift} we may assume that $i_1=1$ and $ j_q=p=r$. In this case, by \Cref{thm_upperbound_limdepth}, it is enough to show that 
\[
 \depth (R_n/I_n) \ge m
 \quad\text{for all }
 n\ge 3r.
 \]
 Using Takayama's formula, this is equivalent to proving that
 \[
 \wti{H}_{i-|G_\bsa|-1}(\Delta_\bsa(I_n))=0 
 \quad \text{for all $n\ge 3r,$ all $i\le m-1$ and all $\bsa \in \Z^n$}.
 \]
 By \Cref{lem.FacetsofDegComplex}(ii), we may assume that $\bsa\in \{-1,0,1\}^n$. Moreover, it suffices to examine the case $\Supp(\bsa)=G_\bsa\in \IN(G_n)$ by virtue of \Cref{lem.FacetsofDegComplex}(iii)--(iv). In this case, $\bsa \in \{-1,0\}^n$. 
As $j_q=p=r$, we know from \Cref{lem.reg1} that $\reg(R_n/I_n)=1$ for all $n\ge 3r$. Thus, if $i+\sum_{i=1}^n a_i=i-|G_\bsa| \ge 2$, then Takayama's formula gives
\[
\wti{H}_{i-|G_\bsa|-1}(\Delta_\bsa(I_n))\cong H^i_\mm(R_n/I_n)_\bsa=0.
\]
Therefore, we may assume that $|G_\bsa|\ge i-1$. If $|G_\bsa|\ge i+1$, then $\wti{H}_{i-|G_\bsa|-1}(\Delta_\bsa(I_n))=0$ since $i-|G_\bsa|-1\le -2$. So there are only two cases left:

\emph{Case 1}: $|G_\bsa|=i$. Since $i\le m-1$, it follows from \Cref{lem.reg2normal.indepsets} that $G_\bsa$ is not a maximal independent set of $G_n$. Hence, $\Delta_\bsa(I_n)\neq \{\emptyset\}$ by \Cref{lem.FacetsofDegComplex}(v). This implies
\[
 \wti{H}_{i-|G_\bsa|-1}(\Delta_\bsa(I_n))=\wti{H}_{-1}(\Delta_\bsa(I_n))=0.
\]

\emph{Case 2}: $|G_\bsa|=i-1$. Recall from \Cref{lem.FacetsofDegComplex} that
\[
\Fc(\Delta_\bsa(I_n)) =\{F \setminus G_\bsa \mid G_\bsa\subseteq F\subseteq [n], F\in \Fc(\IN(G_n))\}.
\]
Thus, the 1-skeleton of $\Delta_\bsa(I_n)$ is exactly the graph $(G_n\setminus N[G_\bsa])^c$. Since $|G_\bsa|=i-1\le m-2$, \Cref{lem.reg2normal.connectedness} says that $(G_n\setminus N[G_\bsa])^c$ is connected. Hence,
\[
 \wti{H}_{i-|G_\bsa|-1}(\Delta_\bsa(I_n))=\wti{H}_{0}(\Delta_\bsa(I_n))=0,
 \]
 as desired.
\end{proof}

We conclude this section with an example illustrating that the lower bound for the stability index of $\depth(R_n/I_n)$ given in \Cref{thm.limdepth.maximal} could be close to optimal.

\begin{ex}
    \label{ex.depthStabJq=r}
    Consider the chain $\Icc=(I_n)_{n\ge 1}$ with stability index $r\ge 6$ and 
\[
E(G_r)=\{(1,r), (2,4), (3,5)\}.
\]
This chain satisfies $i_1=1$ and $j_q=p=r$.
Computations with Macaulay2 \cite{GS96} suggest that
\begin{align*}
 \depth(R_n/I_n) &= \begin{cases}
                    2 &\text{if $n= 2r-5$},\\
                    3 &\text{if $2r-4\le n \le 3r-10$},\\
                    2 &\text{if $n\ge 3r-9$}.
                    \end{cases}                                 
\end{align*}
Assuming the above result, we see that the lower bound $n\ge 3r$ given in \Cref{thm.limdepth.maximal} for the stability index of $\depth(R_n/I_n)$ cannot be improved to $n\ge 3r-10$ in general.   
\end{ex}


\section{Minimal asymptotic depth}
\label{sec.MinDepth}

Our goal in this section is to complete the proof of \Cref{thm_limdepth_main} and thereby provide a comprehensive picture of the asymptotic behavior of $\depth(R_n/I_n)$: we prove the following slight generalization of \Cref{thm_limdepth_main}(ii), showing that the lower bound given in \Cref{thm_lowerbound_limdepth} is attained when $j_q<p$. As always, \Cref{notn_chainofedgeids} is used throughout the section.

\begin{thm}
\label{thm.limdepth.minimal}
 Assume that $j_q<p$. Then 
 \[
 \depth (R_n/I_n) =r-\wti{r}+\min\{2, \spi(\Icc)\}
 \]
for all $n\ge r+2\wti{r}$.
\end{thm}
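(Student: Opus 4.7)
The plan is to reduce this to results already established, with the essential content absorbed into Theorem~\ref{thm_limhomology_intro}(iii). First apply Lemma~\ref{lem.normalizingindexshift} to reduce to the normalized chain $\wti{\Icc}$: since $\spi(\wti{\Icc})=\spi(\Icc)$, the hypothesis $j_q<p$ transfers to $j_q<\wti{r}$ in the shifted indices, and one has
\[
\depth(R_n/I_n)=r-\wti{r}+\depth(R_{n-r+\wti{r}}/\wti{I}_{n-r+\wti{r}})
\quad\text{for } n\ge r.
\]
Hence we may assume from the start that $i_1=1$ and $p=r$, in which case the goal becomes $\depth(R_n/I_n)=\min\{2,\spi(\Icc)\}$ for $n\ge 3r$.

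If $\spi(\Icc)\le 2$, this is already contained in Corollary~\ref{cor.limdepth.sple2}. So only $\spi(\Icc)\ge 3$ remains, where the target value is $2$. The lower bound $\depth(R_n/I_n)\ge 2$ is then immediate from Theorem~\ref{thm_lowerbound_limdepth}. For the matching upper bound, I would produce a nonvanishing local cohomology module in degree $2$: plugging $\bsa=\bfo$ into Takayama's formula and invoking Lemma~\ref{lem.FacetsofDegComplex}(i) gives
\[
\dim_\kk H^2_\mm(R_n/I_n)_\bfo = \dim_\kk \wti{H}_1(\Delta_\bfo(I_n)) = \dim_\kk \wti{H}_1(\IN(G_n)).
\]
Under the hypothesis $j_q<r$ and $\spi(\Icc)\ge 2$, Theorem~\ref{thm_limhomology_intro}(iii) asserts that this dimension equals $\beta=1$. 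Thus $H^2_\mm(R_n/I_n)\neq 0$, which forces $\depth(R_n/I_n)\le 2$ and concludes the proof.

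The main obstacle, therefore, is not this theorem itself but rather Theorem~\ref{thm_limhomology_intro}(iii), whose proof is postponed to Section~\ref{sec.homology}; the delicate point there is to exhibit a non-bounding $1$-cycle in $\IN(G_n)$ when $j_q<p$. Intuitively, the failure of $x_1$ to pair with $x_p$ in $I_r$ creates a topological hole. Concretely, one expects to build such a cycle from a quadrilateral in the complement $G_n^c$ whose two diagonals lie in $G_n$, arising from an edge involving index $1$ together with an edge involving index $p$ (and their $\Inc$-translates), and to verify via the short-move density properties of Lemma~\ref{lem.shortmoves} that no triangle of $\IN(G_n)$ fills it in. Showing conversely that this cycle accounts for \emph{all} of $\wti{H}_1$ (so $\beta=1$ exactly, not larger) should proceed by a Mayer--Vietoris reduction (Lemma~\ref{lem.MayerVietoris}) through a carefully chosen vertex, exploiting Lemma~\ref{lem.GWeaklyChordal} to control the cohomology of the weakly chordal subgraphs that appear.
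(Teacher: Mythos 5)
Your proposal is correct and follows essentially the same route as the paper: reduce via Lemma~\ref{lem.normalizingindexshift} and Corollary~\ref{cor.limdepth.sple2} to the case $i_1=1$, $p=r$, $\spi(\Icc)\ge 3$, take the lower bound from Theorem~\ref{thm_lowerbound_limdepth}, and get the upper bound from the nonvanishing $\wti{H}_1(\IN(G_n))\cong\kk$ (Proposition~\ref{prop.H1nonvanishing}, which is the content of Theorem~\ref{thm_limhomology_intro}(iii) in this case) via Takayama's formula and Lemma~\ref{lem.FacetsofDegComplex}(i). The only cosmetic difference is that the paper proves the key nonvanishing by a Mayer--Vietoris induction on $r-j_q$ with the ``hole'' surfacing as $\wti{H}_0(\IN(G_n\setminus N[n]))\cong\kk$ at the base case, rather than by exhibiting an explicit non-bounding quadrilateral cycle, but you correctly identify and defer to that result, which is proved independently of the present theorem.
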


 The proof of this theorem is mainly based on the following nonvanishing result for the first reduced homology group of the independence complex $\IN(G_n)$ of the graph $G_n$. A complete description of all reduced homology groups of $\IN(G_n)$ can be found in \Cref{sec.homology}.

\begin{prop}
\label{prop.H1nonvanishing}
Assume that $i_1=1$ and $p=r$. If $j_q< r$ and $\spi(\Icc)\ge 2$, then 
\[
\wti{H}_1(\IN(G_n))\cong \kk 
\quad \text{for all } n\ge 3r.
\] 
\end{prop}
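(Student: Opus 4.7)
The plan is to apply the Mayer--Vietoris long exact sequence (\Cref{lem.MayerVietoris}) with vertex $v=1$, combined with the short-moves density lemma (\Cref{lem.shortmoves}), to pin down $\wti{H}_1(\IN(G_n))$. First, I would observe that the hypotheses produce a canonical 1-cycle in $\IN(G_n)$: since $\spi(\Icc)\geq 2$, no pair $(k,k+1)$ lies in $E(G_n)$ by \Cref{rem.sparsityindandedges}; and if $(1,n)\in E(G_n)$ for some $n\geq 3r$, then \Cref{lem.RightTriangle} would force the existence of $(i,j)\in E(G_r)$ with $i=1$ and $j\geq r$, contradicting $j_q<r$. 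Consequently, the cycle $\mathcal{C}\colon 1-2-3-\cdots-n-1$ sits inside the 1-skeleton $G_n^c$ of $\IN(G_n)$ and provides the candidate generator $[\mathcal{C}]\in\wti{H}_1(\IN(G_n))$.

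With $v=1$, Mayer--Vietoris reads
\begin{align*}
\cdots\to \wti{H}_i(\IN(G_n\setminus N[1])) \to \wti{H}_i(\IN(G_n\setminus 1)) \to \wti{H}_i(\IN(G_n)) \to \wti{H}_{i-1}(\IN(G_n\setminus N[1])) \to\cdots.
\end{align*}
Writing $j_1'=\min\{j:(1,j)\in E(G_r)\}$ and using \Cref{lem.shortmoves}, one computes $N_{G_n}(1)=[j_1',j_q+n-r]$ for $n\geq 3r$, so $V(G_n\setminus N[1])=V_L\sqcup V_R$ with $V_L=[2,j_1'-1]$ and $V_R=[j_q+n-r+1,n]$, both non-empty under our hypotheses. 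The proof then reduces to establishing:
\begin{enumerate}[\rm(a)]
\item $\wti{H}_0(\IN(G_n\setminus 1)) = \wti{H}_1(\IN(G_n\setminus 1))=0$, i.e.\ $\IN(G_n\setminus 1)$ is acyclic in degrees $\leq 1$;
\item $\wti{H}_0(\IN(G_n\setminus N[1]))\cong\kk$ and $\wti{H}_i(\IN(G_n\setminus N[1]))=0$ for $i\geq 1$, reflecting the bipartition $V_L\sqcup V_R$.
\end{enumerate}
Substituting (a) and (b) into the long exact sequence yields the short exact sequence $0\to\wti{H}_1(\IN(G_n))\to\kk\to 0$, proving $\wti{H}_1(\IN(G_n))\cong\kk$.

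The main obstacle will be claim (a). The graph $G_n\setminus 1$ is not itself the $n$-th term of another $\Inc$-invariant chain in an obvious way (deleting vertex $1$ interacts nontrivially with the edges $(1,j'_a)\in E(G_r)$), so a direct inductive reduction to \Cref{thm.limdepth.maximal} is unavailable. A plausible route is to construct an explicit deformation retract of $\IN(G_n\setminus 1)$ onto a contractible subcomplex through elementary collapses, each justified by the density of \Cref{lem.shortmoves}, or alternatively to run a second Mayer--Vietoris on $G_n\setminus 1$ using a vertex in the middle range $[r+1,n-r]$, where the neighborhoods are maximally controlled. Claim (b) also needs a careful case split based on the relative positions of $b,B,j_1',j_q$ in \Cref{notn_chainofedgeids}: in certain configurations, cross-edges between $V_L$ and $V_R$ arising from generators such as $(b,r)\in E(G_r)$ may glue the two pieces together, altering the count of connected components and requiring an alternative choice of $v$. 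The uniform treatment of these cases is the delicate combinatorial core of the argument, and likely forms the technical substance deferred to the Appendix mentioned in the introduction.
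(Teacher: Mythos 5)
Your overall strategy --- Mayer--Vietoris at an extremal vertex, with the two flanking terms $\wti{H}_1(\IN(G_n\setminus v))$ and $\wti{H}_0(\IN(G_n\setminus N[v]))$ controlling $\wti{H}_1(\IN(G_n))$ --- is the same as the paper's, and your reading that the vanishing of $\wti{H}_1(\IN(G_n\setminus N[v]))$ is the hard, Appendix-level input is accurate. But your claims (a) and (b) are not both true in general, so the argument as proposed does not close. The paper's \Cref{lem.homology.closedneighborhooddeletion} already exhibits the problem for the mirror-image vertex $v=n$: there $\wti{H}_0(\IN(G_n\setminus N[n]))\cong\kk$ only in the boundary case $j_q=r-1$, and it is $0$ whenever $j_q\le r-2$, precisely because a cross-edge of $(G_n\setminus N[n])^c$ (there, $\{1,n-1\}$) glues the two vertex blocks. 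Under the reflection $i\mapsto n+1-i$ (which carries the chain generated by $E(G_r)$ to the chain generated by the reflected edges), your situation at $v=1$ is governed by the invariant $b$ rather than $j_q$: claim (b) holds only when $b=2$. For a concrete failure take $E(G_r)=\{(1,r-1),(4,6),(4,r)\}$ with $r\ge 8$: then $V_L=[2,r-2]$, $V_R=\{n\}$, and $(2,n)\notin E(G_n)$ (no generator $(i,r)$ has $i\le 2$), so $(G_n\setminus N[1])^c$ is connected and $\wti{H}_0(\IN(G_n\setminus N[1]))=0$. Since the proposition is true for this chain, exactness then forces $\wti{H}_1(\IN(G_n\setminus 1))\ne 0$, so claim (a) fails as well. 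In general the two terms trade off: exactly one of $\wti{H}_1(\IN(G_n\setminus v))$ and $\wti{H}_0(\IN(G_n\setminus N[v]))$ contributes the copy of $\kk$, and which one does depends on the chain.

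The missing idea is the mechanism that turns this trade-off into an induction. You remark that $G_n\setminus 1$ is not the term of another $\Inc$-invariant chain ``in an obvious way'' and therefore abandon the inductive route; but for the vertex $n$ this reduction does exist and is exactly what the paper uses: the edge ideal of $G_n\setminus n$ is $J_{n-1}$ for the chain $\Jcc=(\langle I_{n+1}\cap R_n\rangle)_{n}$, which is $\Inc$-invariant with the same stability index by \cite[Lemma 4.7]{LN22}, and --- crucially --- its index $j_q$ increases by one. Inducting on $r-j_q$, one has $\wti{H}_0(\IN(G_n\setminus N[n]))=0$ and hence $\wti{H}_1(\IN(G_n))\cong\wti{H}_1(\IN(G_n\setminus n))$ at every step until $j_q=r-1$, where \Cref{lem.H1Gminusn} (a regularity-$1$ computation via Takayama's formula) kills $\wti{H}_1(\IN(G_n\setminus n))$ and $\wti{H}_0(\IN(G_n\setminus N[n]))\cong\kk$ supplies the generator. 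Your ``canonical $1$-cycle'' $1-2-\cdots-n-1$ in $G_n^c$ is a correct and pleasant observation, but it only produces a candidate class; without the inductive bookkeeping above it neither shows the class is nonzero nor bounds $\dim_\kk\wti{H}_1$ from above. To repair your proof you would either need to switch to $v=n$ and supply the induction, or prove the reflected analogues of \Cref{lem.H1Gminusn,lem.homology.closedneighborhooddeletion,lem.simple.H1GEqual0} and induct on $b-2$ instead; as written, (a) and (b) are unproven and, in general, false.
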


The main idea to prove \Cref{prop.H1nonvanishing} is to proceed by induction on $r-j_q$ using the long exact sequence in \Cref{lem.MayerVietoris}. Let us begin by showing (non)vanishing results for the zeroth and first reduced homology groups of the independence complexes $\IN(G_n\setminus n)$ and $\IN(G_n\setminus N[n])$.

\begin{lem}
    \label{lem.H0Gminusn}
    Assume that $i_1=1$ and $p=r$. If $\spi(\Icc)\ge 2$, then
    \[
    \wti{H}_{0}(\IN(G_n\setminus n))=0
    \quad \text{for all } n\ge r.
    \]
\end{lem}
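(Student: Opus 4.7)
The plan is to reduce the statement to a very concrete graph-connectivity check via \Cref{lem.vanishingofH0}. Since $\IN(G_n\setminus n)$ has $1$-skeleton equal to $(G_n\setminus n)^c$ viewed as a graph on $[n-1]$, it suffices to show that $(G_n\setminus n)^c$ is connected.

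The key input will be the sparsity hypothesis $\spi(\Icc)\ge 2$. By \Cref{rem.sparsityindandedges}, for every edge $(i,j)\in E(G_n)$ one has $j-i\ge 2$, so no two consecutive integers are adjacent in $G_n$. In particular, each pair $\{i,i+1\}$ with $1\le i\le n-2$ is a non-edge of $G_n$ lying inside $[n-1]$, and is therefore an edge of $(G_n\setminus n)^c$. This gives the Hamiltonian path $1 - 2 - \cdots - (n-1)$ in $(G_n\setminus n)^c$, so that graph is connected.

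Applying \Cref{lem.vanishingofH0} to $G_n\setminus n$ then yields $\wti{H}_0(\IN(G_n\setminus n))\cong\kk^{\mathfrak{c}((G_n\setminus n)^c)-1}=0$, as wanted.

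The argument does not require the hypotheses $i_1=1$ or $p=r$; only $\spi(\Icc)\ge 2$ is used, and the bound $n\ge r$ is irrelevant beyond ensuring that $G_n$ is meaningful in the stable range. There is no real obstacle here, this is a short direct consequence of \Cref{rem.sparsityindandedges} and \Cref{lem.vanishingofH0}.
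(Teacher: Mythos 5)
Your proof is correct and follows essentially the same route as the paper: reduce to connectivity of $(G_n\setminus n)^c$ via \Cref{lem.vanishingofH0}, then use $\spi(\Icc)\ge 2$ to see that all consecutive pairs $(i,i+1)$ are edges of the complement, giving a connecting path. Your side remark that only $\spi(\Icc)\ge 2$ is really needed is also accurate.
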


\begin{proof}
    Obviously, the graph $G_n\setminus n$ contains at least one vertex for all $n\ge r$. So by \Cref{lem.vanishingofH0}, it suffices to verify the connectedness of the complementary graph $(G_n\setminus n)^c$. But this is clear, because any two vertices $i,j$ of $(G_n\setminus n)^c$ with $i<j$ are connected by the edges $(i,i+1), (i+1,i+2),\ldots, (j-1,j)$, all of which belong to $(G_n\setminus n)^c$ since $\spi(\Icc)\ge 2$.
\end{proof}

\begin{lem}
    \label{lem.H1Gminusn}
    Assume that $i_1=1$ and $p=r$. If $j_q=r-1$, then
    \[
    \wti{H}_{1}(\IN(G_n\setminus n))=0
    \quad \text{for all } n\ge 3r.
    \]
\end{lem}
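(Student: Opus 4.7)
The plan is to apply the Mayer--Vietoris long exact sequence of \Cref{lem.MayerVietoris} to $G_n \setminus n$ at the vertex $v = n-1$. Using \Cref{lem.RightTriangle} together with the hypotheses $i_1 = 1$, $j_q = r-1$, and $p = r$, one checks that
\[
N_{G_n}(n-1) \supseteq [1,\, n-r+1] \cup [b,\, B+n-1-r],
\]
where the first interval arises from the triangle $\Delta((1, r-1), n-r)$ and the second from the triangles $\Delta((i',r), n-r)$ with $b \le i' \le B$. For $n \ge 3r$ these two intervals overlap (since $b \le r \le n-r+1$) and together cover nearly all of $[1, n-2]$, so the vertex set of the induced subgraph $(G_n \setminus n) \setminus N[n-1]$ is contained in $\{B + n - r,\, \dots,\, n-2\}$, a set of size bounded by $r - B - 1$, independently of $n$.

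The relevant segment of the long exact sequence reads
\[
\wti{H}_1(\IN((G_n \setminus n) \setminus N[n-1])) \to \wti{H}_1(\IN(G_n \setminus \{n-1, n\})) \to \wti{H}_1(\IN(G_n \setminus n)) \to \wti{H}_0(\IN((G_n \setminus n) \setminus N[n-1])),
\]
so the vanishing of the middle term reduces to (a)~showing $\wti{H}_1(\IN(G_n \setminus \{n-1, n\})) = 0$, and (b)~showing the connecting map on the right is zero. For (b), \Cref{lem.vanishingofH0} reduces the $\wti{H}_0$ computation to counting connected components of the complementary graph of $(G_n \setminus n) \setminus N[n-1]$, a tractable task since that subgraph has bounded vertex set and its edges are explicitly determined by $E(G_r)$ together with the short-move rules of \Cref{lem.shortmoves}. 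For (a), the strategy is to iterate: apply the same Mayer--Vietoris reduction to $G_n \setminus \{n-1, n\}$ at the vertex $n-2$, and continue peeling off top vertices one at a time. Each iteration preserves a bounded-size auxiliary complex on the uppermost residual vertices, thanks to the density of edges near the first row guaranteed by $(1, r-1) \in E(G_r)$; the process terminates once the remaining graph is small enough in terms of $r$ to be analyzed directly.

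The main obstacle will be making the iteration uniform in $n$. Each Mayer--Vietoris step introduces a new pair of auxiliary complexes, and one must verify that the relevant $\wti{H}_0$ and $\wti{H}_1$ contributions remain trivial throughout, including the injectivity/surjectivity of the connecting maps. The hypothesis $j_q = r - 1$ is decisive: the edge $(1, r-1)$ guarantees the persistence of a long top row of edges $(1, k) \in E(G_n)$ for $r - 1 \le k \le n - 1$, which is exactly what prevents the peeling from producing long induced cycles (compare \Cref{lem.noinducedCm>5}) or other obstructions, and which ensures that the independence complex of the base-case graph is acyclic in degree one. At the base case the argument should reduce to a finite combinatorial check, possibly aided by \Cref{cor_depthge2crit} to confirm the connectivity of the complementary graph and hence the vanishing of $\wti{H}_0$ terms required to clear the exact sequence.
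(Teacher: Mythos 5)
Your proposal is a strategy outline rather than a proof, and the gap is structural. In the first Mayer--Vietoris step both of your reduction targets are deferred: for (b) you do not actually compute $\wti{H}_0(\IN((G_n\setminus n)\setminus N[n-1]))$ or the connecting map (note the lemma does not assume $\spi(\Icc)\ge 2$, so the residual graph on $\{B+n-r,\dots,n-2\}$ can have a disconnected complement and a nonzero $\wti{H}_0$, forcing you to understand the map itself, not just its target); and for (a) you propose to iterate the peeling one vertex at a time. That iteration must run for roughly $n-O(r)$ steps before the remaining graph is ``small enough in terms of $r$,'' and each step carries its own $\wti{H}_0$/$\wti{H}_1$/connecting-map obligations. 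You acknowledge that making this uniform in $n$ is ``the main obstacle,'' but that obstacle is exactly the content of the lemma: without a uniform inductive statement there is no proof, only a plan.

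The idea you are missing --- and which collapses the whole argument to one step --- is that $G_n\setminus n$ is itself the $(n-1)$-st graph of another $\Inc$-invariant chain. Its edge ideal is $J_{n-1}=\langle I_n\cap R_{n-1}\rangle_{R_{n-1}}$, and by \cite[Lemma 4.7]{LN22} the chain $\Jcc=(J_n)$ is $\Inc$-invariant with $\ind(\Jcc)=r$; moreover $x_1x_{j_q}\in I_r$ forces $x_1x_{j_q+1}\in J_r$, so the parameter $j_q$ of $\Jcc$ equals $j_q+1=r=p$. Hence \Cref{lem.reg1} applies to $\Jcc$ and gives $\reg(R_{n-1}/J_{n-1})=1$ for $n\ge 3r$, and then \Cref{lem.FacetsofDegComplex}(i) together with Takayama's formula yields $\wti{H}_1(\IN(G_n\setminus n))\cong H^2_{\mm_{n-1}}(R_{n-1}/J_{n-1})_{\bfo}=0$. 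This is precisely why the hypothesis $j_q=r-1$ is the right one: a single vertex deletion pushes $j_q$ up to $p$, placing you in the regularity-one (cochordal) regime, with no induced-subgraph peeling needed. If you want to salvage your route, you would at minimum need to replace the ad hoc peeling by an induction on a chain-level invariant (as the paper does for $r-j_q$ in \Cref{prop.H1nonvanishing}), and supply the connecting-map analysis at each stage.
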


\begin{proof}
    Observe that the edge ideal of $G_n\setminus n$ is the ideal of $R_{n-1}$ generated by monomials in $I_n$ that are not divisible by $x_n$, i.e. the ideal $\langle I_{n}\cap R_{n-1}\rangle_{R_{n-1}}$. Thus, if we define the chain $\Jcc=(J_n)_{n\ge 1}$ as follows
\[
J_n=\begin{cases}
0&\text{if}\quad n\le r-1\\
\langle I_{n+1}\cap R_{n}\rangle_{R_n} &\text{if}\quad n\ge r,
\end{cases}
\]
then $J_{n-1}$ is the edge ideal of $G_n\setminus n$ for $n\ge r+1$. By \cite[Lemma 4.7]{LN22}, $\Jcc$ is an $\Inc$-invariant chain with stability index $\ind(\Jcc)=\ind(\Icc)=r$. (Note that the ideal $J_n$ in the current proof is denoted by $J_{n+1}$ in \cite[Lemma 4.7]{LN22}, hence the difference in the stability indices.) A useful property of the chain $\Jcc$ is that the index $j_q$ for this chain increases by 1. Indeed, we have $x_{1}x_{j_q}\in I_r$ since $i_1=1$. It follows that $x_{1}x_{j_q+1}\in I_{r+1}$, and hence $x_{1}x_{j_q+1}\in J_{r}$, as claimed. Now the assumption that $j_q=r-1$ gives $x_{1}x_{r}\in J_r$.
So by \Cref{lem.reg1}, it holds that $\reg (R_{n-1}/J_{n-1})=1$ for all $n\ge 3r$ (in fact, it suffices to take $n\ge 3r-2$). This combined with \Cref{lem.FacetsofDegComplex}(i) and Takayama's formula yields
$$
\wti{H}_1(\IN(G_n\setminus n))
=\wti{H}_1(\Delta_\bfo(J_{n-1}))
\cong H^2_{\mm_{n-1}}(R_{n-1}/J_{n-1})_\bfo=0,
$$
where $\mm_{n-1}$ denotes the graded maximal ideal of $R_{n-1}.$
\end{proof}

\begin{lem}
\label{lem.homology.closedneighborhooddeletion}
Assume that $i_1=1$, $p=r$, $j_q< r$, and $\spi(\Icc)\ge 2$. Then for $n\ge 2r+1$, 
\[
\wti{H}_{0}(\IN(G_n\setminus N[n]))
\cong 
\begin{cases}
\kk &\text{if }\ j_q=r-1,\\
0 &\text{if }\ j_q\le r-2.
\end{cases}
\]
\end{lem}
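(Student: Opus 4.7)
The plan is to convert the homology computation into a component count for a complement graph. Writing $H \defas G_n \setminus N[n]$, so that $\wti{H}_0(\IN(H)) \cong \kk^{\mathfrak{c}(H^c)-1}$ by \Cref{lem.vanishingofH0}, the task reduces to showing $\mathfrak{c}(H^c) = 2$ when $j_q = r-1$ and $\mathfrak{c}(H^c) = 1$ when $j_q \le r-2$. First I would use \Cref{lem.GWeaklyChordal}(i) to split $V(H) = V_L \sqcup V_R$ with $V_L = \{1,\dots,b-1\}$ and $V_R = \{n-r+B+1,\dots,n-1\}$. Both sides are nonempty under the current hypotheses: one has $b \ge 2$, since $b=1$ would force $(1,r) \in E(G_r)$ contradicting $j_q<r$; and $B \le r-2$, since $(r-1,r) \in E(G_r)$ would contradict $\spi(\Icc) \ge 2$. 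Because $\spi(\Icc) \ge 2$ also guarantees that consecutive integers are non-edges of $G_n$, the sequences $1,2,\dots,b-1$ and $n-r+B+1,\dots,n-1$ are paths in $H^c$, so $V_L$ and $V_R$ are each connected in $H^c$. Hence $\mathfrak{c}(H^c) \in \{1,2\}$, and it suffices to decide whether there is any cross edge in $H^c$ between $V_L$ and $V_R$, i.e.\ any pair $(i,j) \in V_L \times V_R$ that is not an edge of $G_n$.

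In the case $j_q = r-1$, I would show that \emph{every} $(i,j) \in V_L \times V_R$ is an edge of $G_n$, by verifying $(i,j) \in \Delta((1,r-1),n-r)$ and invoking \Cref{lem.RightTriangle}. The three required inequalities $0 \le i-1 \le j-(r-1) \le n-r$ follow respectively from $i \ge 1$; from the bounds $i \le b-1$ and $j \ge n-r+B+1 \ge n-r+b+1$ combined with $n \ge 2r+1$; and from $j \le n-1$. This rules out any cross edge in $H^c$, forcing $\mathfrak{c}(H^c) = 2$ and $\wti{H}_0(\IN(H)) \cong \kk$.

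In the case $j_q \le r-2$, I would simply exhibit a single cross non-edge. The candidate is $(1,n-1)$: by \Cref{lem.RightTriangle}, any triangle $\Delta((i',j'),n-r)$ containing $(1,n-1)$ forces $i' \le 1$ and $j' \ge r-1$, hence $(i',j') \in \{(1,r-1),(1,r)\}$; but since $i_1 = 1$, both options would force $j_q \ge r-1$, contradicting $j_q \le r-2$. Hence $(1,n-1) \in E(H^c)$ bridges $V_L$ and $V_R$, so $\mathfrak{c}(H^c) = 1$ and $\wti{H}_0(\IN(H)) = 0$. No serious obstacle arises in this scheme; the only place demanding care is the chain of inequalities for triangle containment in the $j_q = r-1$ case, which depends crucially on both $n \ge 2r+1$ and $\spi(\Icc) \ge 2$.
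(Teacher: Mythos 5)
Your proposal is correct and follows essentially the same route as the paper: reduce to counting components of $(G_n\setminus N[n])^c$ via \Cref{lem.vanishingofH0}, use the vertex decomposition from \Cref{lem.GWeaklyChordal}, connect each side by consecutive-integer paths (using $\spi(\Icc)\ge2$), then show every cross pair lies in $\Delta((1,r-1),n-r)$ when $j_q=r-1$ and exhibit the cross non-edge $(1,n-1)$ when $j_q\le r-2$. Your explicit check that both parts $V_L$ and $V_R$ are nonempty is a small point the paper leaves implicit, and is a welcome addition.
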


\begin{proof}
Set $G\defas G_n\setminus N[n]$. Then for all $n\ge 2r+1$, we know from \Cref{lem.GWeaklyChordal} that $G$ has the vertex set $V(G)=V_1\cup V_2$, where 
\[
V_1=\{1,\ldots,b-1\}
\quad\text{and}\quad
V_2= \{n-r+B+1,\ldots,n-1\}.
\]
Denote by $\Gamma_1$ and $\Gamma_2$ the induced subgraphs of $G^c$ on $V_1$ and $V_2$, respectively.
We show that $\Gamma_1$ and $\Gamma_2$ are connected. Indeed, note that $G^c=G_n^c\setminus N[n]$ is the induced subgraph of $G_n^c$ on $V(G)=V_1\cup V_2$. Hence, $\Gamma_1$ and $\Gamma_2$ are also the induced subgraphs of $G_n^c$ on $V_1$ and $V_2$.
From the assumption $\spi(\Icc)\ge 2$ it follows that $(i,i+1)\in E(G_n^c)$ for all $i\in[n-1]$. Therefore, $\Gamma_1$ and $\Gamma_2$ are connected, as desired.

Let us first consider the case $j_q=r-1$, i.e. $(1,r-1)\in E(G_r)$. By \Cref{lem.vanishingofH0}, it suffices to show that $G^c$ has exactly two connected components. We claim that $\Gamma_1$ and $\Gamma_2$ are the connected components of $G^c$. 
Indeed, this means that $(u,v)\notin E(G^c)$, or equivalently, $(u,v)\in E(G)$ for every $u\in V_1$ and $v\in V_2$. As $n \ge 2r+1$, we have 
\[
v-u\ge (n-r+B+1)-(b-1)=n-r+B-b+2\ge r-2.
\]
Moreover, $v-(r-1)\le n-r$ since $v\le n-1$. It follows that
\[
0\le u-1 \le v-(r-1) \le n-r,
\]
which yields $(u,v)\in\Delta((1,r-1),n-r)$. Hence, $(u,v)\in E(G_n)$ by \Cref{lem.RightTriangle}. Since $G$ is the induced subgraph of $G_n$ on $V(G)$, this implies that $(u,v)\in E(G)$, as claimed.

Now assume that $j_q\le r-2$. Again by \Cref{lem.vanishingofH0}, 
we need to show in this case that $G^c$ is a connected graph. Since $\Gamma_1$ and $\Gamma_2$ are connected, it suffices to find an edge of $G^c$ connecting them. We claim that $(1,n-1)$ is such an edge. Suppose to the contrary that $(1,n-1)\notin E(G^c)$, i.e. $(1,n-1)\in E(G)$. This means that $(1,n-1)\in E(G_n)$ since $G$ is an induced subgraph of $G_n$. Thus, $(1,n-1)\in \Delta((i,j),n-r)$ for some $(i,j)\in E(G_r)$ by \Cref{lem.RightTriangle}. It follows that
\[
0\le 1-i \le n-1-j\le n-r.
\]
Hence, $i=1$ and $j\ge r-1$. But this contradicts the assumption that $j_q \le r-2$. Therefore, $(1,n-1)\in E(G^c)$, as desired.
\end{proof}

\begin{lem}
\label{lem.simple.H1GEqual0}
Assume that $i_1=1$, $p=r$, and $\spi(\Icc)\ge 2$. Then
$$
\wti{H}_{1}(\IN(G_n\setminus N[n]))=0
\quad \text{for all }
n\ge 3r.
$$
\end{lem}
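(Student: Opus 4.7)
My plan splits according to the value of $j_q$. In both cases I invoke Takayama's formula at $\bsa = \bfo$, which yields $\wti H_1(\IN(H_n)) \cong H^2_{\mm'}(R'/L_n)_\bfo$, where $H_n = G_n \setminus N[n]$, $L_n = I(H_n)$, $R' = \kk[x_i \mid i \in V(H_n)]$ and $\mm'$ is its graded maximal ideal. When $j_q = r$, Lemma~\ref{lem.reg1} gives $\reg(I_n) = 2$, so $G_n$ is cochordal by Fr\"oberg's theorem. Cochordality is inherited by induced subgraphs, hence $H_n$ is cochordal, $L_n$ has a linear resolution, and $\reg(R'/L_n) = 1$. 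The desired vanishing $H^2_{\mm'}(R'/L_n)_\bfo = 0$ then follows immediately from the definition of regularity, since the bi-index $(2,0)$ lies strictly above it.

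When $j_q < r$ the argument is more subtle because $G_n$ need not be cochordal. In this regime $V(H_n) = V_1 \sqcup V_2$ with $V_1 = \{1, \ldots, b-1\}$ and $V_2 = \{n-r+B+1, \ldots, n-1\}$ both nonempty (the hypothesis $j_q < r$ forces $b \ge 2$). Cross-edges of $H_n$ from $V_1$ to $V_2$ arise, via \Cref{lem.RightTriangle}, precisely from representatives $(i, j) \in E(G_r)$ with $i < b$ and $B < j < r$; denote this set by $E'$. If $E' = \emptyset$, then $H_n = H_n|_{V_1} \sqcup H_n|_{V_2}$ as disjoint graphs, so $\IN(H_n)$ is the simplicial join $\IN(H_n|_{V_1}) \ast \IN(H_n|_{V_2})$. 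By the K\"unneth formula for joins over the field $\kk$, and because both $V_i$ are nonempty (so the $\wti H_{-1}$ terms vanish),
\[
\wti H_1(\IN(H_n)) \cong \wti H_0(\IN(H_n|_{V_1})) \otimes_\kk \wti H_0(\IN(H_n|_{V_2})).
\]
Each tensor factor vanishes because $\spi(\Icc) \ge 2$ forces consecutive indices inside each $V_i$ to be non-adjacent in $G_n$, so each complement $(H_n|_{V_i})^c$ contains the consecutive-index path and is therefore connected; \Cref{lem.vanishingofH0} then applies.

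In the remaining subcase $E' \ne \emptyset$, I would apply Mayer--Vietoris (\Cref{lem.MayerVietoris}) to $H_n$ using the vertex $v_0 = n-r+B+1 \in V_2$, which dominates the other $V_2$-vertices in the sense that $N_{H_n}(v) \cap V_1 \subseteq N_{H_n}(v_0) \cap V_1$ for every $v \in V_2$; this follows from the short south moves of \Cref{lem.shortmoves}(i). The plan is then an induction on $|V_2|$: the subgraph $H_n \setminus v_0$ has $V_2$ shrunk by one (so the inductive hypothesis yields $\wti H_1(\IN(H_n \setminus v_0)) = 0$), while $H_n \setminus N_{H_n}[v_0]$ loses all cross-edges incident to $v_0$ and, in favourable situations, reduces back to the disjoint-union subcase treated above. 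Feeding these into the segment
\[
\wti H_1(\IN(H_n \setminus v_0)) \to \wti H_1(\IN(H_n)) \to \wti H_0(\IN(H_n \setminus N_{H_n}[v_0]))
\]
of the Mayer--Vietoris long exact sequence would give $\wti H_1(\IN(H_n)) = 0$, provided the connecting map on the right is zero. The main obstacle will be precisely controlling this connecting homomorphism when cross-edges persist after removing $v_0$; here the weakly chordal structure of $H_n$ (\Cref{lem.GWeaklyChordal}(iii)) combined with the density properties from \Cref{lem.shortmoves} should ultimately force the vanishing.
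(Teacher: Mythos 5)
Your proposal is only partially complete, and the missing part is precisely the hard case. Two pieces of it are sound and in fact simpler than what the paper does: (a) when $j_q=r$, deducing cochordality of $G_n$ from $\reg(I_n)=2$ via Fr\"oberg, passing to the induced subgraph $H_n=G_n\setminus N[n]$ (cochordality is indeed inherited), and killing $H^2_{\mm'}(R'/L_n)_{\bfo}$ by the regularity bound; and (b) when there are no cross-edges between $V_1$ and $V_2$, the join/K\"unneth computation combined with connectedness of each $(H_n|_{V_i})^c$ (which does follow from $\spi(\Icc)\ge2$ and the consecutiveness of each $V_i$). These are correct and genuinely different from the paper, which does not split on $j_q$ at all.

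The gap is the case $j_q<r$ with cross-edges present, which you leave as a plan with an explicitly unresolved step. The Mayer--Vietoris segment you write down requires $\wti{H}_0(\IN(H_n\setminus N_{H_n}[v_0]))=0$, and while your domination observation does show that $H_n\setminus N_{H_n}[v_0]$ has no cross-edges, this alone does not give the vanishing: the join formula kills $\wti{H}_0$ only when \emph{both} surviving blocks are nonempty. If $v_0$ is adjacent to all of $V_1$ (or all of the remaining second block), the surviving graph lives in a single block whose vertices need not be consecutive, and nothing in your argument rules out that its complement is disconnected (two surviving low-index vertices can perfectly well be adjacent in $G_n$). Moreover the induction on $|V_2|$ is not set up: the graphs $H_n\setminus v_0, H_n\setminus\{v_0,v_0+1\},\dots$ are not of the form $G_m\setminus N[m]$ for any chain, so the statement being inducted on must be generalized to a two-parameter family, and the base case (second block exhausted) is itself a nontrivial $\wti{H}_1$-vanishing claim that you do not address. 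This is exactly where the paper's Appendix does its work: it proves the stronger \Cref{lem.H1GEqual0} for the family $G_n(a,A)$ by double induction on $a$ and $r-A$, using Woodroofe's theorem $\reg=\inm$ for weakly chordal graphs to dispose of the base cases $\inm\le1$, and Kimura's projective-dimension formula (\Cref{prop.pdim.weaklychordal}) together with a careful analysis of strongly disjoint complete bipartite subgraphs (\Cref{lem.Gn.no.contains.twocompletebipartite,lem.Gn.no.contains.onecompletebipartite}) to either force $\depth\ge3$ directly or to locate a dominating vertex for the Mayer--Vietoris reduction. Some substitute for that combinatorial analysis is needed; as written, your "should ultimately force the vanishing" is not a proof.
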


The proof of this lemma is rather technical and lengthy. So we postpone it to the Appendix. Let us present here the proof of \Cref{prop.H1nonvanishing}.

\begin{proof}[Proof of \Cref{prop.H1nonvanishing}]
Let $n\ge 3r.$ The long exact sequence in \Cref{lem.MayerVietoris}, applied to the graph $G_n$ and its vertex $n$, together with the fact that $\wti{H}_{0}(\IN(G_n\setminus n))=0$ (\Cref{lem.H0Gminusn}) and $\wti{H}_{1}(\IN(G_n\setminus N[n])) = 0$ (\Cref{lem.simple.H1GEqual0}), yields the following short exact sequence
\begin{equation}
 \label{eq.ses}
 0 \longrightarrow \wti{H}_{1}(\IN(G_n\setminus n)) \longrightarrow \wti{H}_{1}(\IN(G_n)) \longrightarrow \wti{H}_{0}(\IN(G_n\setminus N[n])) \longrightarrow 0.    
 \end{equation}
Let us show that $\wti{H}_{1}(\IN(G_n))\cong \kk$ by induction on $r-j_q\ge 1$. If $r-j_q=1$, then \Cref{lem.H1Gminusn} gives $\wti{H}_1(\IN(G_n\setminus n))=0$. It thus follows from \eqref{eq.ses} and \Cref{lem.homology.closedneighborhooddeletion} that
\[
\wti{H}_1(\IN(G_n))
\cong\wti{H}_{0}(\IN(G_n\setminus N[n]))
\cong \kk.
\]

Now assume that $r-j_q>1$. In this case, $\wti{H}_{0}(\IN(G_n\setminus N[n]))=0$ by \Cref{lem.homology.closedneighborhooddeletion}. The short exact sequence \eqref{eq.ses} then implies that $\wti{H}_{1}(\IN(G_n))\cong \wti{H}_{1}(\IN(G_n\setminus n)).$
So it remains to show that $\wti{H}_{1}(\IN(G_n\setminus n)) \cong \kk.$
Consider the chain $\Jcc=(J_n)_{n\ge 1}$ as in the proof of \Cref{lem.H1Gminusn}. It is easy to verify that this chain 
satisfies all the assumptions of \Cref{prop.H1nonvanishing}.
Moreover, we know from the proof of \Cref{lem.H1Gminusn} that $x_{1}x_{j_q+1}\in J_{r}$. Thus we may apply the induction hypothesis to the chain $\Jcc$ and obtain $\wti{H}_{1}(\IN(G_n\setminus n)) \cong \kk$. 
This concludes the proof.
\end{proof}

The following example, which is somewhat similar to \Cref{ex.depthStabJq=r}, suggests that the lower bound for the index of stability of $\wti{H}_{1}(\IN(G_n))$ in \Cref{prop.H1nonvanishing} could be close to optimal.

\begin{ex}
Consider the chain $\Icc=(I_n)_{n\ge 1}$ with stability index $r\ge 6$ and 
\[
E(G_r)=\{(1,3), (2,r), (r-2,r)\},
\]
which satisfies all the assumptions of \Cref{prop.H1nonvanishing}.
Computations with Macaulay2 \cite{GS96} suggest that
\begin{align*}
\dim_\kk \wti{H}_1(\IN(G_n)) &= \begin{cases}
                                  0, &\text{if $n\le 2r-5$},\\
                                  2, &\text{if $2r-4\le n\le 3r-10$},\\
                                  1, &\text{if $n\ge 3r-9$}.
                    \end{cases}                                 
\end{align*}
That is, $\wti{H}_{1}(\IN(G_n))$ could be stable from $n=3r-9$. 
\end{ex}

We are now ready to prove \Cref{thm.limdepth.minimal}.

\begin{proof}[Proof of \Cref{thm.limdepth.minimal}]  
In view of \Cref{cor.limdepth.sple2}, it suffices to consider the case $\spi(\Icc)\ge 3$.
Moreover, using \Cref{lem.normalizingindexshift} we may furthermore assume that $i_1=1$, $p=r$ and thus reduce the statement we want to prove to 
\[
 \depth (R_n/I_n) =2
 \quad \text{for all }
n\ge 3r.
 \]
Let $n\ge 3r$. Combining \Cref{prop.H1nonvanishing} with \Cref{lem.FacetsofDegComplex}(i) and Takayama's formula we get
$$
\kk\cong \wti{H}_{1}(\IN(G_n)) 
=\wti{H}_1(\Delta_\bfo(I_{n}))
\cong H^2_\mm(R_n/I_n)_\bfo.
$$
It follows that $\depth(R_n/I_n) \le 2$. Consequently, $\depth(R_n/I_n) = 2$ by \Cref{thm_lowerbound_limdepth}. The proof is complete.
\end{proof}

The lower bound for the index of depth stability in \Cref{thm.limdepth.minimal} is also close to optimal, as illustrated by the next example.

\begin{ex}
 Consider the chain $\Icc=(I_n)_{n\ge 1}$ with stability index $r\ge 6$ and 
\[
E(G_r)=\{(1,r-1), (2,3), (2,r)\},
\]
which satisfies all the assumptions of \Cref{thm.limdepth.minimal}.
We show that $\depth(R_n/I_n)\ge 2$ for $n=3r-8$ and $\depth(R_n/I_n)=1$ for $n\ge 3r-7$.

\begin{figure}[ht]
\includegraphics[width=40ex]{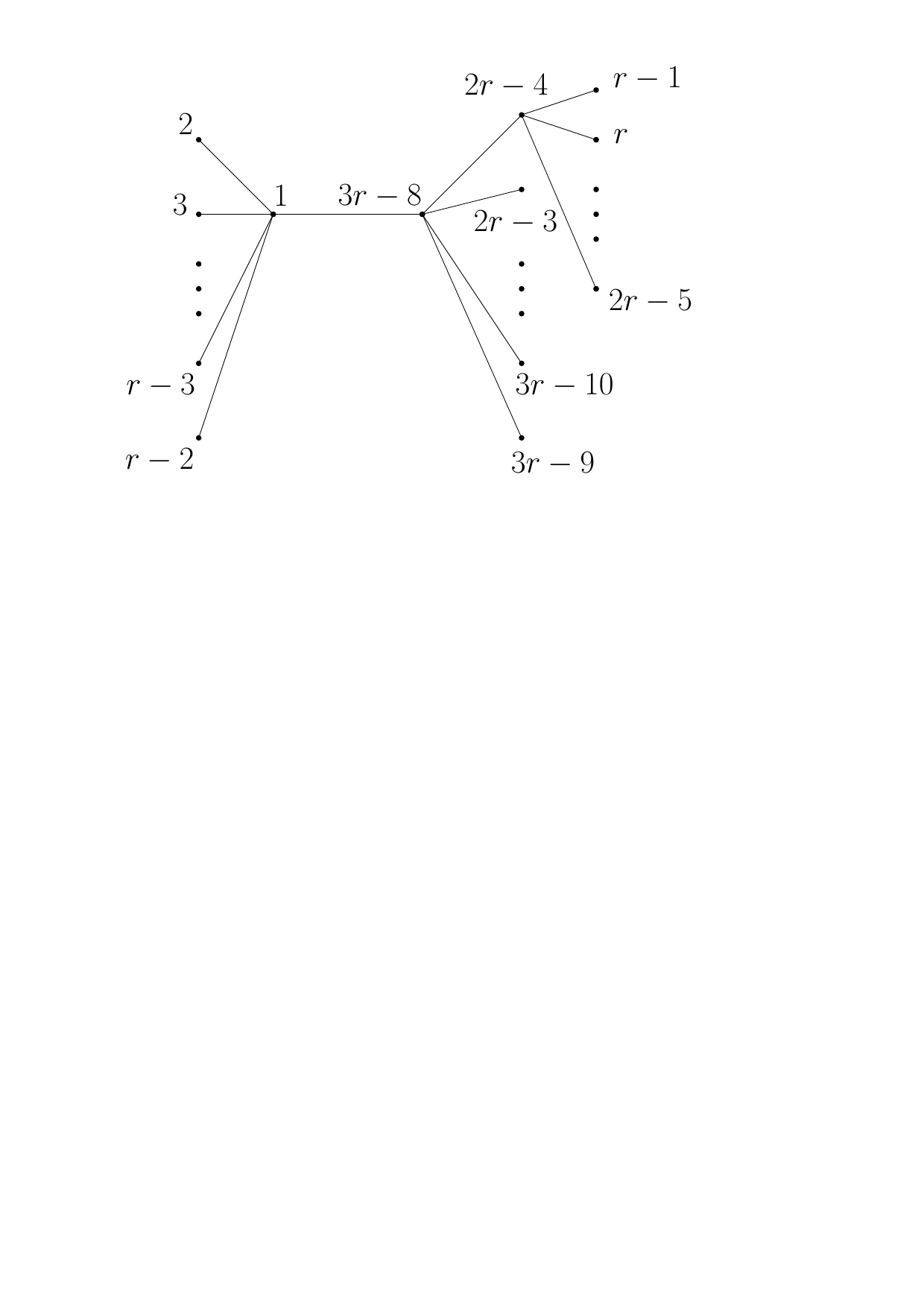}\\
\caption{A spanning tree of $G_{3r-8}^c$}
\label{fig.G(3r-8)c}
\end{figure}

To show $\depth(R_{3r-8}/I_{3r-8})\ge 2$, thanks to \Cref{cor_depthge2crit}, is equivalent to proving that $G_{3r-8}^c$ is connected. This holds because $G_{3r-8}^c$ has a spanning tree as in \Cref{fig.G(3r-8)c}. 

Next, to show $\depth(R_n/I_n)=1$ for $n\ge 3r-7$, we observe that $G_n^c$ is not connected for all such $n$, as it admits $r-1$ as an isolated vertex. Therefore, for each $r\ge 6$, the index of depth stability of the chain $\Icc$ is $3r-7$.
\end{ex}

\begin{rem}
\label{rem.minDepth}
As a summary of \Cref{thm.limdepth.maximal,thm.limdepth.minimal} and \cite[Theorem 7.1]{HNT2024}, \Cref{tab:multicol} provides a complete picture of the asymptotic depth and regularity of $\Inc$-invariant chains of (eventually non-zero) edge ideals. It would be interesting to have similar tables for more general chains, e.g. chains of (squarefree) monomial ideals.

\begin{table}[ht]
\caption{Asymptotic depth and regularity of $\Inc$-invariant chains of edge ideals}
\begin{center}
\begin{tabular}{|c|c|c|}
\hline
Invariant & Value & Condition\\
\hline
\multirow{4}{*}{$\lim\limits_{n\to \infty} \depth(R_n/I_n) $ }&\multirow{2}{*}{$r-\wti{r}+\spi(\Icc)$}& \multirow{2}{*}{either $j_q=p,$  or $\spi(\Icc)\le 2$ }\\ &&\\
    \cline{2-3}
    &\multirow{2}{*}{$r-\wti{r}+2$}&\multirow{2}{*}{$j_q<p$ and $\spi(\Icc)\ge 2$}  \\&&\\
\hline
 \multirow{4}{*}{$\lim\limits_{n\to \infty} \reg (R_n/I_n) $ }&\multirow{2}{*}{1}&\multirow{2}{*}{either $j_q=p,$ or ($\spi(\Icc)=1$ and $\inm(G_{3r})=1$)}   \\
 &&\\
\cline{2-3}
&\multirow{2}{*}{2}&\multirow{2}{*}{$j_q<p$ and  ($\spi(\Icc)\ge 2$ or  $\inm(G_{3r})=2$)} \\
 &&  \\
\hline
\end{tabular}
\end{center}
\label{tab:multicol}
\end{table}
\end{rem}

\section{Asymptotic homology of independence complexes}
\label{sec.homology}

In view of \Cref{prop.H1nonvanishing}, one may wonder whether all reduced homology groups of the independence complex $\IN(G_n)$ can be determined asymptotically. The main result of the present section provides a complete answer to this question. As before, we keep using \Cref{notn_chainofedgeids} throughout the section.

\begin{thm}
\label{thm_limhomology_main}
Assume that $i_1=1$ and $p=r$. Then there exist two nonnegative integers $\alpha, \beta$ depending only on the chain $\Icc$ such that the following hold for all $n\gg0$:
\begin{enumerate}
    \item 
    $\wti{H}_i(\IN(G_n))=0$ for $i\ne 0,1$.
    \item 
    $\wti{H}_0(\IN(G_n))
    \cong
    \begin{cases}
     \kk^{n-\alpha}&\text{if }\ \spi(\Icc)=1,\\
     0&\text{if }\ \spi(\Icc)\ge2.
    \end{cases}$
     \item 
     $\wti{H}_1(\IN(G_n))\cong\kk^\beta.$ Furthermore, if $\spi(\Icc)\ge2$, then 
$\beta=\begin{cases}
0&\text{if }\ j_q=r,\\
1&\text{if }\ j_q<r.
\end{cases}$
\end{enumerate}
\end{thm}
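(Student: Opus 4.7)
Via Takayama's formula (Subsection 2.2) combined with Lemma 2.22(i), one obtains the fundamental identification
\[
\wti H_i(\IN(G_n)) \cong H^{i+1}_\mm(R_n/I_n)_\bfo
\quad\text{for all } i\in\Z.
\]
Item (i) is then immediate: the regularity bound $\reg(R_n/I_n) \le 2$ proved in \cite{HNT2024} forces $H^i_\mm(R_n/I_n)_\bfo = 0$ for all $i \ge 3$, hence $\wti H_i(\IN(G_n)) = 0$ for $i \ge 2$; vanishing for $i \le -1$ is automatic since $\IN(G_n) \ne \{\emptyset\}$ (there are singleton faces).

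\textbf{Item (ii).} If $\spi(\Icc) \ge 2$, every consecutive pair $\{k, k+1\}$ is an edge of $G_n^c$, so $G_n^c$ is connected and Lemma 2.18 yields $\wti H_0(\IN(G_n)) = 0$. If $\spi(\Icc) = 1$, fix an edge $(i_0, i_0+1)\in E(G_r)$. The plan is to show that for all sufficiently large $n$ there is a \emph{middle interval} $[\alpha_1, n-\alpha_2]$, with $\alpha_1,\alpha_2$ depending only on $\Icc$, such that every vertex $k$ in this interval is adjacent in $G_n$ to every other vertex (and hence is isolated in $G_n^c$, contributing one component). The verification combines the short-move Lemma 3.4 applied to $(i_0,i_0+1)$, to $(1,j_q)$, and to $(b,r)$, which together cover pairs $(k,l)$ with $l<k$, with $l>k$ small, and with $l>k$ large. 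The induced subgraph of $G_n^c$ on the remaining boundary vertices stabilizes up to isomorphism, by the same shift-invariance idea as in Lemma 3.12, and contributes a fixed number of components depending only on $\Icc$. Summing the two contributions and applying Lemma 2.18 yields $\wti H_0(\IN(G_n)) \cong \kk^{n-\alpha}$.

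\textbf{Item (iii).} Separate three cases. If $j_q = r$, Lemma 3.13 gives $\reg(R_n/I_n) = 1$, hence $H^2_\mm(R_n/I_n)_\bfo = 0$ and $\beta = 0$. If $j_q < r$ and $\spi(\Icc) \ge 2$, Proposition 5.2 yields $\wti H_1(\IN(G_n))\cong \kk$, so $\beta = 1$. If $j_q < r$ and $\spi(\Icc) = 1$, apply the Mayer--Vietoris sequence of Lemma 2.19 at $v = n$ to obtain
\[
\wti H_1(\IN(G_n\setminus N[n])) \to \wti H_1(\IN(G_n\setminus n)) \to \wti H_1(\IN(G_n)) \to \wti H_0(\IN(G_n\setminus N[n])) \to \wti H_0(\IN(G_n\setminus n)).
\]
The graph $G_n\setminus n$ is governed by the auxiliary $\Inc$-invariant chain $\Jcc$ introduced in the proof of Lemma 5.3, whose $j_q$-invariant is strictly larger; the graph $G_n\setminus N[n]$ is controlled by Lemmas 3.12 and 5.4. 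I would then induct on $r - j_q$ simultaneously for items (ii) and (iii) of this theorem: the base case $j_q = r$ is already handled, and each inductive step replaces $\Icc$ by $\Jcc$, which has the same $\spi$ but larger $j_q$. The inductive hypothesis then makes the dimensions of the four outer terms eventually constant in $n$, and a diagram chase forces $\dim_\kk \wti H_1(\IN(G_n))$ to stabilize to a constant $\beta$ depending only on $\Icc$.

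\textbf{Main obstacle.} The principal difficulty is case (iii) in the $\spi(\Icc) = 1$ regime. Unlike the setup of Proposition 5.2, the Mayer--Vietoris sequence no longer collapses to a short exact sequence: neither $\wti H_0(\IN(G_n\setminus n))$ nor $\wti H_1(\IN(G_n\setminus N[n]))$ needs to vanish when $\spi(\Icc) = 1$. One must therefore carry careful bookkeeping on all four boundary terms through the induction, and the stabilization of these terms ultimately rests on the component analysis of item (ii). In particular, items (ii) and (iii) appear to be genuinely coupled and must be proved together rather than sequentially.
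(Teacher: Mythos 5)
Your treatment of item (i), of item (ii), and of the case $\spi(\Icc)\ge 2$ of item (iii) coincides with the paper's proof: the Takayama identification $\wti H_i(\IN(G_n))\cong H^{i+1}_\mm(R_n/I_n)_\bfo$, the regularity bound from \cite{HNT2024}, the connectedness of $G_n^c$ when $\spi(\Icc)\ge2$, the ``middle interval of isolated vertices'' plus shift-stabilization of the boundary graph when $\spi(\Icc)=1$ (the paper's \Cref{lem.isolatedvertices,lem.graphiso}), \Cref{lem.reg1} for $j_q=r$, and \Cref{prop.H1nonvanishing} for $j_q<r$, $\spi(\Icc)\ge2$ are all exactly the ingredients used there.

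The gap is in item (iii) when $\spi(\Icc)=1$, precisely where you locate the main obstacle, and your proposed resolution does not close it. First, the four outer terms of your Mayer--Vietoris segment are \emph{not} all eventually constant: $\wti H_0(\IN(G_n\setminus n))$ is governed by the chain $\Jcc$, which still has sparsity index $1$, so by your own item (ii) its dimension grows linearly in $n$. More seriously, even with all four outer dimensions in hand, exactness of a five-term sequence $A\to B\to C\to D\to E$ gives $\dim C=(\dim B-\operatorname{rank}(A\to B))+\dim\ker(D\to E)$, so no diagram chase determines $\dim C$ without controlling the ranks of the two outer maps $\wti H_1(\IN(G_n\setminus N[n]))\to\wti H_1(\IN(G_n\setminus n))$ and $\wti H_0(\IN(G_n\setminus N[n]))\to\wti H_0(\IN(G_n\setminus n))$, and nothing in your argument supplies these. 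The paper sidesteps the issue entirely with a global Euler characteristic count: it shows (\Cref{prop.Hilbertseries.sp1}) that deleting the isolated vertices $U_n$ of $G_n^c$ reduces $\IN(G_n)$ to a complex that stabilizes up to isomorphism, so $f_0(\IN(G_{n+1}))=f_0(\IN(G_n))+1$ while $f_i$ is eventually constant for $i\ge1$; hence $\chi(\IN(G_n))=n-\gamma$ for a constant $\gamma$, and combining with items (i) and (ii) forces $\dim_\kk\wti H_1(\IN(G_n))=\gamma-\alpha+1$. Your route could be repaired by summing the \emph{entire} long exact sequence (giving $\wti\chi(\IN(G_n))=\wti\chi(\IN(G_n\setminus n))-\wti\chi(\IN(G_n\setminus N[n]))$, with the last term constant since $G_n\setminus N[n]$ is a fixed graph), but that is precisely the Euler characteristic argument in disguise, and it still requires item (ii) as input — confirming your observation that (ii) and (iii) are coupled.
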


Notice that only the case $i_1=1$ and $p=r$ considered in the preceding theorem is nontrivial, because otherwise, $G_n$ contains isolated vertices for all $n\ge r$, which means that $\IN(G_n)$ is a cone and thus $\wti{H}_i(\IN(G_n))=0$ for every $i$. It is also worth noting that all reduced homology groups of $\IN(G_n)$, except for $\wti{H}_0(\IN(G_n))$ in the case $\spi(\Icc)=1$, are of bounded dimension.

To prove \Cref{thm_limhomology_main}, let us begin with its first statement.

\begin{lem}
\label{lem.Hge2}
For all $n\ge 4r$ and all $i\ne 0,1$, it holds that
$
\wti{H}_i(\IN(G_n))=0.
$
\end{lem}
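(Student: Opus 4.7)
The plan is to deduce \Cref{lem.Hge2} from the regularity bound $\reg(R_n/I_n)\le 2$ via Takayama's formula specialized at $\bsa=\bfo$. Since $\Delta_\bfo(I_n)=\IN(G_n)$ by \Cref{lem.FacetsofDegComplex}(i) and $G_\bfo=\emptyset$, Takayama's formula yields
\[
\dim_\kk \wti{H}_{i}(\IN(G_n))=\dim_\kk H^{i+1}_\mm(R_n/I_n)_\bfo
\]
for every integer $i$. Thus the vanishing statement for $i\ge 2$ becomes the vanishing of $H^j_\mm(R_n/I_n)_0$ for $j\ge 3$, which in turn will follow from the regularity bound.

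First I would establish $\reg(R_n/I_n)\le 2$ for $n\ge 4r$, working under the standing hypotheses $i_1=1$ and $p=r$ of \Cref{thm_limhomology_main}. If $j_q=r$, this is immediate from \Cref{lem.reg1}, which gives $\reg(I_n)=2$ already for $n\ge 3r-3$, equivalently $\reg(R_n/I_n)=1$. If instead $j_q<r$, it follows from \cite[Theorem~7.1]{HNT2024} (summarized in \Cref{rem.minDepth}), which shows $\reg(I_n)\in\{2,3\}$ for $n$ large, hence $\reg(R_n/I_n)\le 2$; in this case the regularity stability index is still comfortably below $4r$.

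Next, from $\reg(R_n/I_n)\le 2$ and the characterization
\[
\reg(M)=\max\{i+j\mid H^i_\mm(M)_j\ne 0\},
\]
the choice $j=0$ yields $H^i_\mm(R_n/I_n)_0=0$ for all $i\ge 3$. Combined with the displayed dimension equality above, this gives $\wti{H}_i(\IN(G_n))=0$ for all $i\ge 2$, as required.

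It remains to handle the low degrees. For $i\le -2$ the reduced homology vanishes by convention. For $i=-1$, every singleton $\{j\}\subseteq[n]$ is independent in $G_n$ since $I_n$ is generated in degree two and contains no variable; hence $\IN(G_n)\ne\{\emptyset\}$, so $\wti{H}_{-1}(\IN(G_n))=0$. The only substantive ingredient is the regularity bound, which is already available from \Cref{lem.reg1} and \cite{HNT2024}, so I do not expect any real obstacle beyond the bookkeeping between the two cases $j_q=r$ and $j_q<r$.
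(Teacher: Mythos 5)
Your proposal is correct and follows essentially the same route as the paper: Takayama's formula at $\bsa=\bfo$ together with $\Delta_\bfo(I_n)=\IN(G_n)$ reduces the claim to the vanishing of $H^{i+1}_\mm(R_n/I_n)_\bfo$ for $i\ge 2$, which follows from $\reg(R_n/I_n)\le 2$; the paper simply cites \cite[Theorem 6.1]{HNT2024} for that bound at $n\ge 4r$ rather than re-deriving it by a case split on $j_q$.
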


\begin{proof}
Obviously, $\IN(G_n)\neq \{\emptyset\}$ for all $n\ge r$. Hence, $\wti{H}_i(\IN(G_n))=0$ for all $i<0$. Recall from \cite[Theorem 6.1]{HNT2024} that $\reg(R_n/I_n)\le 2$ for all $n\ge 4r$. Thus combining \Cref{lem.FacetsofDegComplex}(i) and Takayama's formula we get
\[
\wti{H}_i(\IN(G_n))=\wti{H}_i(\Delta_\bfo(I_{n}))\cong H^{i+1}_{\mm}(R_n/I_n)_\bfo=0
\]
for all $n\ge 4r$ and all $i\ge 2$. The desired conclusion follows.
\end{proof}

For the proof of the remaining part of \Cref{thm_limhomology_main}, note that the case $\spi(\Icc)\ge2$ has essentially been treated in \Cref{prop.H1nonvanishing} (for $\wti{H}_1$) and in the proof of \Cref{thm_lowerbound_limdepth} (for $\wti{H}_0$). To deal with the case  $\spi(\Icc)=1$, we need some more auxiliary results. In what follows, when $n\ge 4r$, we denote
\begin{equation}
    \label{eq.Un}
    U_n=\{2r, 2r+1,\ldots, n-2r\}.
\end{equation}

\begin{lem}
\label{lem.isolatedvertices}
Assume that $i_1=1,p=r$ and $\spi(\Icc)=1$. Then $U_n$ consists of isolated vertices of $G_n^c$ for all $n\ge 4r$.
\end{lem}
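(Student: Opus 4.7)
The plan is to show that every vertex $k \in U_n = \{2r, \ldots, n - 2r\}$ is adjacent in $G_n$ to every other vertex of $[n]$, which by definition means $k$ is isolated in the complement $G_n^c$. The three hypotheses supply three explicit edges of $E(G_r)$ that we will exploit via \Cref{lem.RightTriangle}: from $i_1 = 1$ we obtain $(1, j_q) \in E(G_r)$; from $p = r$ we obtain $(b, r) \in E(G_r)$ with $1 \le b \le r - 1$; and from $\spi(\Icc) = 1$ we obtain some $(v, v + 1) \in E(G_r)$ with $1 \le v \le r - 1$. For a fixed $k \in U_n$ and any $l \in [n] \setminus \{k\}$, the task is to exhibit one of these edges $(i, j)$ such that $(\min(k, l), \max(k, l)) \in \Delta((i, j), n - r)$, for then $\{k, l\} \in E(G_n)$.

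For the subcase $l < k$, the triangle $\Delta((v, v + 1), n - r)$ contains $(l, k)$ exactly when $v \le l \le k - 1$ (the auxiliary condition $k \le n - r + v + 1$ follows from $k \le n - 2r$ and $v \ge 1$), while $\Delta((1, j_q), n - r)$ contains $(l, k)$ when $l \le k - j_q + 1$; since $k - j_q + 1 \ge 2r - r + 1 = r + 1$ exceeds the leftover range $1 \le l < v \le r - 1$, the two triangles jointly cover all of $1 \le l \le k - 1$. For the subcase $l > k$, the triangle $\Delta((v, v + 1), n - r)$ covers $k + 1 \le l \le n - r + v + 1$, and the triangle $\Delta((b, r), n - r)$ covers $k + r - b \le l \le n$; the inequality $k + r - b \le n - r - b \le n - r + v + 2$, which follows from $k \le n - 2r$ together with $b, v \ge 1$, ensures the two ranges meet and thus jointly cover $k + 1 \le l \le n$.

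The argument therefore reduces to three routine interval-covering computations, and the main (essentially the only) obstacle is the arithmetic bookkeeping. The specific buffer $2r$ on each side of $U_n$ is precisely what is needed for the three triangles $\Delta((1, j_q), n - r)$, $\Delta((v, v + 1), n - r)$, and $\Delta((b, r), n - r)$ to jointly sweep out every off-diagonal pair with one endpoint in $U_n$; once these overlaps are verified, the lemma follows immediately from \Cref{lem.RightTriangle}.
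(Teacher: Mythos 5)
Your proposal is correct and follows essentially the same route as the paper: both extract the three edges $(1,\cdot)$, a consecutive edge from $\spi(\Icc)=1$, and $(b,r)$ from the hypotheses, and then verify via \Cref{lem.RightTriangle} that the corresponding three triangles jointly cover every pair $\{k,l\}$ with $k\in U_n$. The only cosmetic differences are your choice of $j_q$ for the first edge (the paper uses an arbitrary $(1,a)\in E(G_r)$) and the exact split points of the interval covers; the arithmetic in both cases checks out.
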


\begin{proof}
It follows from the assumption that $E(G_r)$ contains edges of the forms $(1,a),$ $(v-1,v),$ $(b,r)$, where $2 \le a, v\le r$ and $1\le b\le r-1$. Take any $k\in U_n$. We need to prove that $\{k,l\}\in E(G_n)$ for every $l\in [n]\setminus \{k\}$. By \Cref{lem.RightTriangle}, it suffices to show that either $(k,l)$ or $(l,k)$ belongs to one of the triangles $\Delta((1,a),n-r)$, $\Delta((v-1,v),n-r)$ and $\Delta((b,r),n-r).$ We distinguish two cases.

\emph{Case 1}: $l<k.$ In this case, one can easily check that
\[
(l,k)\in
\begin{cases}
    \Delta((1,a),n-r)&\text{if }\ 1\le l\le k-a+1,\\
    \Delta((v-1,v),n-r)&\text{if }\  k-a+2\le l\le k-1.
\end{cases}
\]

\emph{Case 2}: $l>k.$ It is readily seen that
\[
(k,l)\in
\begin{cases}
    \Delta((v-1,v),n-r)&\text{if }\ k+1\le l\le n-r+v,\\
    \Delta((b,r),n-r)&\text{if }\  n-r+v< l\le n. 
\end{cases}
\]
Thus we always have $\{k,l\}\in E(G_n)$, as desired.
\end{proof}

The next result determines the asymptotic dimension of $\wti{H}_0(\IN(G_n))$ when $\spi(\Icc)=1$. As before, the number of connected components of a graph $G$ is denoted by $\mathfrak{c}(G)$ . 

\begin{lem}
\label{lem.graphiso}
Assume that $i_1=1,p=r$ and $\spi(\Icc)=1$. For $n\ge 4r$, let $\Gamma_n=G_n^c\setminus U_n$ be the induced subgraph of $G_n^c$ on the vertex set
$$
V(\Gamma_n)=[n]\setminus U_n
=\{1,\dots,2r-1\}\cup\{n-2r+1,\dots,n\}.
$$
Then the following hold for all $n\gg 0$.
\begin{enumerate}
\item 
$\mathfrak{c}(G_n^c)=\mathfrak{c}(\Gamma_n)+n-4r+1.$ 
\item 
The graphs $\Gamma_n$ and $\Gamma_{n+1}$ are isomorphic. In particular, $\mathfrak{c}(\Gamma_{n+1})=\mathfrak{c}(\Gamma_n)$ and one can define $\mathfrak{c}(\Icc)\defas\mathfrak{c}(\Gamma_n)$ for $n\gg0$.
\item
$\dim_\kk \wti{H}_0(\IN(G_n))=n-\alpha$, where $\alpha=4r-\mathfrak{c}(\Icc)\ge 1$.
\end{enumerate}
\end{lem}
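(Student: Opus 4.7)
The plan is to handle the three items sequentially, leveraging \Cref{lem.isolatedvertices} and \Cref{lem.RightTriangle}.

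For (i), I will invoke \Cref{lem.isolatedvertices}, which says that every vertex of $U_n$ is isolated in $G_n^c$ for $n\geq 4r$. Each such vertex is then a singleton component of $G_n^c$, and since no edge of $G_n^c$ has an endpoint in $U_n$, the remaining components of $G_n^c$ are exactly those of its induced subgraph $\Gamma_n$ on $V(\Gamma_n)=[n]\setminus U_n$. Counting components then yields $\mathfrak{c}(G_n^c)=\mathfrak{c}(\Gamma_n)+|U_n|=\mathfrak{c}(\Gamma_n)+(n-4r+1)$.

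For (ii), my plan is to define an explicit bijection $\phi:V(\Gamma_n)\to V(\Gamma_{n+1})$ that fixes $\{1,\dots,2r-1\}$ and sends $l\in\{n-2r+1,\dots,n\}$ to $l+1$, and to show that $\phi$ is a graph isomorphism for $n$ sufficiently large. Using the characterization from \Cref{lem.RightTriangle} that $(k,l)\in E(G_n)$ with $k<l$ iff there exists $(i,j)\in E(G_r)$ with $0\leq k-i\leq l-j\leq n-r$, I will split into three cases according to the halves of $V(\Gamma_n)$ containing $k$ and $l$. First, if both vertices lie in $\{1,\dots,2r-1\}$, then $l\leq 2r-1\leq n-r$ for $n\geq 3r$, so the inequality $l-j\leq n-r$ is automatic and the edge condition depends only on $k,l,i,j$ and not on $n$. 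Second, if both lie in $\{n-2r+1,\dots,n\}$, then all three right-triangle inequalities transform consistently under $(k,l,n)\mapsto(k+1,l+1,n+1)$. The genuine subtlety is the mixed case $k\in\{1,\dots,2r-1\}$, $l\in\{n-2r+1,\dots,n\}$: here the middle inequality $k-i\leq l-j$ differs from its $\phi$-image $k-i\leq(l+1)-j$ by one unit. However, $l-k\geq n-4r+2$ while $j-i\leq r-1$ for every edge of $G_r$, so $k-i\leq l-j$ becomes automatic once $n\geq 5r-3$, and both edge conditions reduce to the single requirement that there exists $(i,j)\in E(G_r)$ with $i\leq k$ and $j\geq l-n+r$, which is manifestly invariant under $\phi$. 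Thus $\phi$ is a graph isomorphism for all $n\gg0$, whence $\mathfrak{c}(\Gamma_{n+1})=\mathfrak{c}(\Gamma_n)$, legitimizing the definition $\mathfrak{c}(\Icc)\defas\mathfrak{c}(\Gamma_n)$.

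Item (iii) will then follow immediately from (i), (ii), and \Cref{lem.vanishingofH0}: since $G_n$ has at least one vertex, $\dim_\kk\wti{H}_0(\IN(G_n))=\mathfrak{c}(G_n^c)-1=\mathfrak{c}(\Icc)+(n-4r)=n-\alpha$ for $\alpha=4r-\mathfrak{c}(\Icc)$. The bound $\alpha\geq 1$ is automatic from $|V(\Gamma_n)|=4r-1$, which forces $\mathfrak{c}(\Icc)\leq 4r-1$. The main obstacle I anticipate is the mixed-case analysis in (ii): pinning down the exact stabilization bound and verifying that the one-unit discrepancy in the middle inequality is absorbed by the large gap $l-k\geq n-4r+2$ between the two halves of $V(\Gamma_n)$, so that no edge between them is created or destroyed by the shift $\phi$.
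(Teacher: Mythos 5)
Your proposal is correct, and parts (i) and (iii) coincide with the paper's argument (isolated vertices from \Cref{lem.isolatedvertices} plus \Cref{lem.vanishingofH0}). The interesting divergence is in part (ii). You use the same bijection as the paper (the restriction of $\sigma_{2r}$ to $V(\Gamma_n)$), but you verify directly, via the triangle criterion of \Cref{lem.RightTriangle}, that the edge/non-edge status of every pair is preserved in both directions, splitting into the low--low, high--high, and mixed cases; the only delicate point, the one-unit shift in the middle inequality $k-i\le l-j$ in the mixed case, is correctly absorbed by noting that this inequality is automatic once $n\ge 5r-3$, so that both edge conditions reduce to the $\phi$-invariant requirement $i\le k$, $j\ge l-n+r$. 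The paper instead argues more softly: by $\Inc$-invariance alone, $x_ix_j\in I_n$ implies $\sigma_{2r}(x_ix_j)\in I_{n+1}$, so the inverse map $\Gamma_{n+1}\to\Gamma_n$ is always a graph morphism that is bijective on vertices; hence $|E(\Gamma_n)|\ge|E(\Gamma_{n+1})|$ for all $n\ge 4r$, and this non-increasing integer sequence must eventually be constant, forcing the morphism to be an isomorphism for $n\gg0$. The paper's route avoids any case analysis and never uses the explicit description of $E(G_n)$, but it is non-effective; your route is more computational but yields an explicit threshold (roughly $n\ge 5r-3$) beyond which $\Gamma_n\cong\Gamma_{n+1}$, which is strictly more information than the statement requires.
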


\begin{proof}
(i) Since $U_n$ consists of isolated vertices of $G_n^c$ by \Cref{lem.isolatedvertices}, we have 
\[
\mathfrak{c}(G_n^c)
=\mathfrak{c}(\Gamma_n)+|U_n|
=\mathfrak{c}(\Gamma_n)+n-4r+1.
\]

(ii) It suffices to prove the first assertion.
Consider the map $\sigma_{2r}$ as defined in \eqref{eq.sigma}.
Recall that $\sigma_{2r}\in\Inc_{n,n+1}$ for all $n\ge1$.
Let $\phi_n: V(\Gamma_n)\to V(\Gamma_{n+1})$ be the restriction of $\sigma_{2r}$ on $V(\Gamma_n)$. We show that $\phi_n$ is a graph isomorphism between $\Gamma_n$ and $\Gamma_{n+1}$ for all $n\gg 0$.
Evidently, $\phi_n$ is a bijection between $V(\Gamma_n)$ and $V(\Gamma_{n+1})$.
Denote by $\psi_n: V(\Gamma_{n+1})\to V(\Gamma_{n})$ the inverse map of $\phi_n$. We claim that $\psi_n(E(\Gamma_{n+1}))\subseteq E(\Gamma_{n})$. Equivalently, we need to show that if $\{i,j\}\notin E(\Gamma_{n})$, then $\{\phi_n(i),\phi_n(j)\}\notin E(\Gamma_{n+1})$. Indeed, if $\{i,j\}\notin E(\Gamma_{n})$, then $\{i,j\}\in E(G_{n})$, i.e. $x_ix_j\in I_n$. Since $\sigma_{2r} \in \Inc_{n,n+1}$, this implies that 
\[
x_{\phi_n(i)}x_{\phi_n(j)}=x_{\sigma_{2r}(i)}x_{\sigma_{2r}(j)}=\sigma_{2r}(x_ix_j) \in I_{n+1}.
\]
Hence, $\{\phi_n(i),\phi_n(j)\}\in E(G_{n+1})$, and thus $\{\phi_n(i),\phi_n(j)\}\notin E(\Gamma_{n+1})$, as desired.
So $\psi_n$ is a graph morphism from $\Gamma_{n+1}$ to $\Gamma_n$, which is a bijection on the vertex sets. Consequently, it yields an injective map $E(\Gamma_{n+1})\to E(\Gamma_n)$. In particular, $|E(\Gamma_n)|\ge |E(\Gamma_{n+1})|$ for all $n\ge 4r$. It follows that $|E(\Gamma_n)|= |E(\Gamma_{n+1})|$ for $n\gg 0$. In other words, for all $n\gg 0$, $\psi_n$ is a graph isomorphism, and hence so is its inverse $\phi_n$. 

(iii) The formula for the dimension of $\wti{H}_0(\IN(G_n))$ results from combining (i) and \Cref{lem.vanishingofH0}. We have $\alpha\ge 1$ since $\mathfrak{c}(\Icc)\le |V(\Gamma_n)|=4r-1.$
\end{proof}

It remains to determine the asymptotic dimension of $\wti{H}_1(\IN(G_n))$ when $\spi(\Icc)=1$. Given \Cref{lem.Hge2,lem.graphiso}, this can be done by using the \emph{Euler characteristic} of $\IN(G_n)$:
\[
\chi(\IN(G_n))
=\sum\limits_{i=0}^{d_n}(-1)^if_i(\IN(G_n))
=1+\sum_{i=-1}^{d_n} (-1)^i\dim_\kk \wti{H}_i(\IN(G_n)),
\]
where $d_n=\dim \IN(G_n)$ and $(f_i(\IN(G_n)))_{i=0}^{d_n}$ is the $f$-vector of $\IN(G_n)$. The following result describes the asymptotic behavior of the $f$-vector and the Euler characteristic of $\IN(G_n)$.

\begin{prop}
\label{prop.Hilbertseries.sp1}
Assume that $i_1=1$, $p=r$ and $\spi(\Icc)=1$. Then the following statements hold for all $n\gg 0$.
\begin{enumerate}
    \item 
    $f_0(\IN(G_{n+1}))=f_0(\IN(G_{n}))+1$ and $f_i(\IN(G_{n+1}))=f_i(\IN(G_{n}))$ for all $i\ge 1$.
    \item 
    $\chi(\IN(G_{n+1}))=\chi(\IN(G_n))+1$.
\end{enumerate}
\end{prop}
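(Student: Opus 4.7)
The plan is to leverage the ``universality'' of the vertices in $U_n$ established in \Cref{lem.isolatedvertices} to reduce the computation of $f_i(\IN(G_n))$ for $i\ge1$ to the complex associated to the stabilized graph $\Gamma_n$, and then deduce the Euler characteristic statement by a direct sum.

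First, for part (i) with $i=0$: every singleton $\{v\}$ with $v\in[n]$ is automatically an independent set of $G_n$, so $f_0(\IN(G_n))=n$, giving $f_0(\IN(G_{n+1}))=n+1=f_0(\IN(G_n))+1$ immediately.

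For part (i) with $i\ge 1$, the key observation is that by \Cref{lem.isolatedvertices}, every vertex $v\in U_n$ is adjacent in $G_n$ to every other vertex of $[n]$; equivalently, $v$ is an isolated vertex of $G_n^c$. Consequently, no independent set of $G_n$ of cardinality at least $2$ can contain any element of $U_n$. Hence, for every $i\ge 1$,
\[
f_i(\IN(G_n))=f_i(\IN(G_n\setminus U_n)),
\]
where $G_n\setminus U_n$ is the induced subgraph of $G_n$ on $V(\Gamma_n)=[n]\setminus U_n$, whose complement is $\Gamma_n$. By \Cref{lem.graphiso}(ii), the graphs $\Gamma_n$ and $\Gamma_{n+1}$ are isomorphic for all $n\gg0$; taking complements yields $G_n\setminus U_n\cong G_{n+1}\setminus U_{n+1}$, so their independence complexes are isomorphic, and in particular they share the same $f$-vector. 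Combining this with the displayed identity (applied to both $n$ and $n+1$) gives $f_i(\IN(G_{n+1}))=f_i(\IN(G_n))$ for all $i\ge1$, proving (i).

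For part (ii), observe that the above identities imply both complexes have the same dimension beyond level $0$, so summing the telescoping difference yields
\[
\chi(\IN(G_{n+1}))-\chi(\IN(G_n))=\bigl(f_0(\IN(G_{n+1}))-f_0(\IN(G_n))\bigr)+\sum_{i\ge 1}(-1)^i\bigl(f_i(\IN(G_{n+1}))-f_i(\IN(G_n))\bigr)=1+0=1.
\]

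There is no substantive obstacle here: all the heavy lifting has been done in \Cref{lem.isolatedvertices} (universality of $U_n$) and \Cref{lem.graphiso}(ii) (eventual graph isomorphism $\Gamma_n\cong \Gamma_{n+1}$). The only mild subtlety is making sure one is allowed to pass from ``independent sets of $G_n$ of size $\ge 2$ live in $V(\Gamma_n)$'' to an equality of $f_i$'s, which is immediate because every independent set of $G_n\setminus U_n$ is also an independent set of $G_n$ (it is an induced subgraph).
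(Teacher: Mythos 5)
Your proposal is correct and follows essentially the same route as the paper: both arguments rest on \Cref{lem.isolatedvertices} (the vertices of $U_n$ are isolated in $G_n^c$, hence lie in no independent set of size $\ge 2$) to reduce the $f_i$ for $i\ge 1$ to those of $\IN(G_n\setminus U_n)=\IN(\Gamma_n^c)$, and then invoke the eventual isomorphism $\Gamma_n\cong\Gamma_{n+1}$ from \Cref{lem.graphiso}(ii). Your direct observation that $f_0(\IN(G_n))=n$ is a minor simplification of the paper's bookkeeping but does not change the argument.
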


 \begin{proof}
 It is apparent that (ii) follows from (i), so we only need to prove (i). Observe that if $v$ is an isolated vertex of $G_n^c$, then it is also an isolated vertex of $\IN(G_n)$ and this gives
 \begin{equation}
 \label{eq.fvector}
    f_0(\IN(G_{n}))=f_0(\IN(G_{n}\setminus v))+1
    \quad\text{and}\quad
    f_i(\IN(G_{n}))=f_i(\IN(G_{n}\setminus v))
    \ \text{ for all } i\ge 1.
 \end{equation}
 For $n\ge 4r$ consider the set $U_n$ given in \eqref{eq.Un}. Recall from \Cref{lem.isolatedvertices} that $U_n$ consists of isolated vertices of $G_n^c$. Hence, applying \eqref{eq.fvector} repeatedly we obtain
 \begin{equation}
 \label{eq.fvector2}
     \begin{aligned}
     f_0(\IN(G_{n}))&=f_0(\IN(G_{n}\setminus U_n))+|U_n|
     =f_0(\IN(G_{n}\setminus U_n))+n-4r+1,\\
     f_i(\IN(G_{n}))&=f_i(\IN(G_{n}\setminus U_n))
     \ \text{ for all } i\ge 1.
     \end{aligned}
 \end{equation}
 Note that $G_{n}\setminus U_n=\Gamma_n^c$, where $\Gamma_n=G_{n}^c\setminus U_n$. By \Cref{lem.graphiso}, the graphs $\Gamma_n$ and $\Gamma_{n+1}$ are isomorphic for $n\gg 0$. This implies that the graphs $G_{n}\setminus U_n$ and $G_{n+1}\setminus U_{n+1}$ are isomorphic for $n\gg 0$. Consequently, the simplicial complexes $\IN(G_{n}\setminus U_n)$ and $\IN(G_{n+1}\setminus U_{n+1})$ are also isomorphic for $n\gg 0$. The desired conclusion now follows readily from \eqref{eq.fvector2}.
\end{proof}

\begin{rem}
    \Cref{prop.Hilbertseries.sp1}(i) implies that $\dim \IN(G_{n})$ is a constant for $n\gg0$. This also follows from \cite[Theorem 3.8]{LNNR1} and the fact that $I_n$ is the Stanley--Reisner ideal of $\IN(G_{n})$.
\end{rem}

We are now in a position to prove \Cref{thm_limhomology_main}.

\begin{proof}[Proof of \Cref{thm_limhomology_main}]
    The first statement follows from \Cref{lem.Hge2}. To prove the second and third statements, we distinguish two cases.

    \emph{Case 1}: $\spi(\Icc)\ge 2$. From the proof of \Cref{thm_lowerbound_limdepth} we know that the graph $G_n^c$ is connected for all $n\ge r$. Hence, $\wti{H}_0(\IN(G_n))=0$ for all $n\ge r$ by \Cref{lem.vanishingofH0}. Let us now prove the formula for $\wti{H}_1(\IN(G_n))$. In view of \Cref{prop.H1nonvanishing}, we only need to consider the case $j_q=r$. In this case, $\reg(R_n/I_n)=1$ for all $n\ge 3r-3$ by \Cref{lem.reg1}. So using \Cref{lem.FacetsofDegComplex}(i) and Takayama's formula, we get
    \[
    \wti{H}_{1}(\IN(G_n)) 
    =\wti{H}_1(\Delta_\bfo(I_{n}))
    \cong H^2_\mm(R_n/I_n)_\bfo
    =0
    \quad\text{for all } n\ge 3r-3.
    \]

    \emph{Case 2}: $\spi(\Icc)=1$. According to \Cref{lem.graphiso}(iii), there exists a constant $\alpha\ge1$ such that $\dim_\kk\wti{H}_0(\IN(G_n))=n-\alpha$ for $n\gg 0$. It remains to prove that $\dim_\kk\wti{H}_1(\IN(G_n))=\beta$ for some constant $\beta$ when $n\gg0$. Indeed, it follows from \Cref{prop.Hilbertseries.sp1}(ii) that there exists a constant $\gamma$ such that
    $$
   {\chi}(\IN(G_n))=n-\gamma
   \quad\text{for } n\gg0.
   $$ 
   Note that ${\chi}(\IN(G_n))=1+\dim_\kk\wti{H}_0(\IN(G_n))-\dim_\kk\wti{H}_1(\IN(G_n))$ for $n\gg0$ by \Cref{lem.Hge2}. Therefore,
   $$
   n-\gamma=1+(n-\alpha)-\dim_\kk\wti{H}_1(\IN(G_n)),
   $$
   and hence $\dim_\kk\wti{H}_1(\IN(G_n))=\gamma-\alpha+1$ for $n\gg0$, as desired.
\end{proof}

\Cref{prop.H1nonvanishing} and the proof of \Cref{thm_limhomology_main} provide lower bounds for the stability indices of $\wti{H}_0(\IN(G_n))$ and $\wti{H}_1(\IN(G_n))$ when $\spi(\Icc)\ge 2$, namely, $r$ and $3r$, respectively. 
It would therefore be interesting to have similar bounds in the case $\spi(\Icc)=1$. In this case, it would also be interesting to determine the constants $\alpha$ and $\beta$ in \Cref{thm_limhomology_main} explicitly. While $\alpha$ is always positive by \Cref{lem.graphiso}, the following examples indicate that $\beta$ can be zero or not.

\begin{ex}
Let $r=2$ and $E(G_2)=\{(1,2)\}$. Then $G_n$ is the complete graph $K_n$ for all $n\ge 2$. Thus, $\wti{H}_0(\IN(G_n)) \cong \kk^{n-1}$ and $\wti{H}_1(\IN(G_n)) =0$ for all $n\ge 2$.
\end{ex}

\begin{ex}
Let $r=4$ and $E(G_4)=\{(1,2),(3,4)\}$. We claim  that for all $n\ge 5$,
\[
 \wti{H}_0(\IN(G_n)) \cong \kk^{n-4} \quad \text{and} \quad \wti{H}_1(\IN(G_n)) \cong \kk.
\]
In fact, it is not hard to show that for all $n\ge 5$, the facets of $\IN(G_n)$ are precisely
\[
\{1,n-1\},\{2,n-1\},\{1,n\},\{2,n\},\{3\},\{4\},\ldots,\{n-2\}.
\]
The desired conclusion follows.
\end{ex}


\section{Appendix}

Here, as promised, we provide the proof of \Cref{lem.simple.H1GEqual0}. 
It is more convenient to prove the following slightly stronger result, which specializes to \Cref{lem.simple.H1GEqual0} when $a=b$ and $A=B$.

\begin{prop}
    \label{lem.H1GEqual0}
Assume that $i_1=1$, $p=r$ and $\spi(\Icc)\ge 2$. Let $a$ and $A$ be integers with $1\le a\le b$ and $B\le A \le r-1$. Denote by $G_n(a,A)$ the induced subgraph of $G_n$ on the vertex set $V(G_n(a,A))=V_1\cup V_2$, where $V_1=\{1,\dots,a-1\}$ and $V_2=\{n-r+A+1,\dots,n-1\}.$
Then 
$$
\wti{H}_{1}(\IN(G_n(a,A)))=0
\quad\text{for all }
n\ge 3r.
$$
\end{prop}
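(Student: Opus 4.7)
My plan would be to prove this by induction on $N \defas (a-1)+(r-1-A)=|V(G_n(a,A))|$, combined with the Mayer--Vietoris long exact sequence from \Cref{lem.MayerVietoris}. The base case $N\le 1$ is immediate, since $\IN(G_n(a,A))$ is then either $\{\emptyset\}$ or a single point, and $\wti H_1$ vanishes trivially.

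For the inductive step, assume $N\ge 2$ and that the statement holds for all smaller values of $N$. By the symmetry of the setup we may assume $V_1\ne\emptyset$ (otherwise use the analogous argument with $v$ the smallest element of $V_2$). Set $v\defas a-1\in V_1$. Since both $G_n(a,A)\setminus v$ and $G_n(a-1,A)$ are induced subgraphs of $G_n$ on the same vertex set $\{1,\dots,a-2\}\cup V_2$, we have $G_n(a,A)\setminus v=G_n(a-1,A)$. The inductive hypothesis then gives $\wti H_1(\IN(G_n(a-1,A)))=0$, and \Cref{lem.MayerVietoris} collapses to
\[
0\longrightarrow\wti H_1(\IN(G_n(a,A)))\longrightarrow\wti H_0(\IN(G_n(a,A)\setminus N[v]))\xrightarrow{\ \phi\ }\wti H_0(\IN(G_n(a-1,A))),
\]
so the problem reduces to proving that $\phi$ is injective.

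To analyze $\phi$, I would exploit $\spi(\Icc)\ge 2$, which by \Cref{rem.sparsityindandedges} makes every pair of consecutive integers a non-edge of $G_n$, so that each interval of consecutive integers is a connected subpath in $G_n^c$. In view of \Cref{lem.vanishingofH0}, this controls both $\wti H_0$'s in the sequence above: each of $V_1$ and $V_2$ (respectively, each consecutive sub-interval of $V_1\setminus N[v]$ and $V_2\setminus N[v]$) contributes at most one connected component of the relevant complement graph. The neighborhoods $N(v)\cap V_1$ and $N(v)\cap V_2$ can be described explicitly via the right-triangle characterization in \Cref{lem.RightTriangle}, and shown to be unions of consecutive intervals using the short-move \Cref{lem.shortmoves}.

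The main obstacle is the scenario in which $(G_n(a,A)\setminus N[v])^c$ is disconnected while $(G_n(a-1,A))^c$ is connected; this is precisely the configuration that would force $\ker\phi\ne 0$ and yield a non-zero contribution to $\wti H_1(\IN(G_n(a,A)))$. I would rule it out by showing that any non-edge of $G_n$ joining $V_1\setminus\{v\}$ to $V_2$ — whose existence is guaranteed by the connectedness of $(G_n(a-1,A))^c$ — can be \emph{slid}, via \Cref{lem.shortmoves}, into a non-edge joining $V_1\setminus N[v]$ to $V_2\setminus N[v]$. Verifying this translation in all degenerate configurations (where $v$ lies close to $b$ or $B$, or where $V_1\setminus N[v]$ or $V_2\setminus N[v]$ becomes small) will be the technical heart of the proof, requiring a careful case analysis based on the position of $v=a-1$ relative to $b,B$ and on which triangles $\Delta((i_t,j_t),n-r)$ cover the vertex $v$.
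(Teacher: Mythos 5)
Your reduction is sound as far as it goes: with $v=a-1$ one indeed has $G_n(a,A)\setminus v=G_n(a-1,A)$, and once $\wti H_1(\IN(G_n(a-1,A)))=0$ is known, \Cref{lem.MayerVietoris} identifies $\wti H_1(\IN(G_n(a,A)))$ with $\ker\phi$. But this means that the injectivity of $\phi$ is, modulo the inductive hypothesis, \emph{equivalent} to the statement being proved, and your proposal stops exactly there: the "technical heart" is announced but not carried out. Two further points make the gap more than a formality. First, $\ker\phi\ne 0$ is not only the scenario where $(G_n(a,A)\setminus N[v])^c$ is disconnected while $(G_n(a-1,A))^c$ is connected; injectivity of the map on $\wti H_0$ requires that no two distinct components of $(G_n(a,A)\setminus N[v])^c$ land in the same component of $(G_n(a-1,A))^c$, so a partial merging of several components must also be excluded, and $V_1\setminus N[v]$, $V_2\setminus N[v]$ need not be single intervals. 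Second, the appeal to "symmetry" to reduce to $V_1\ne\emptyset$ is not available: the triangles $\Delta((i,j),n-r)$ are not preserved under reversing the vertex order, so the bottom block $V_1$ and the top block $V_2$ behave differently and each direction of the induction needs its own argument (the paper's proof likewise runs a double induction, on $a$ and on $r-A$, for this reason).

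For contrast, the paper avoids analyzing $\phi$ altogether. It first shows that $G_n(a,A)$ is weakly chordal with $\inm\le 2$, so Woodroofe's theorem gives $\reg=\inm$; when $\inm\le 1$ Takayama's formula kills $\wti H_1$ immediately. When $\inm=2$ it invokes the projective dimension formula of \Cref{prop.pdim.weaklychordal} via strongly disjoint families of complete bipartite subgraphs: unless a single complete bipartite subgraph covers all but at most one vertex, one gets $\depth\ge 3$ and hence $\wti H_1=0$; in the remaining exceptional configuration the structure of that bipartite subgraph forces $N[a-1]$ to cover all of $V(G_n(a,A))$ except $a-2$, so $G_n(a,A)\setminus N[a-1]\cong K_1$ and Mayer--Vietoris gives $\wti H_1(\IN(G_n(a,A)))\cong\wti H_1(\IN(G_n(a-1,A)))$, which is handled by the induction. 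So the only place the paper uses Mayer--Vietoris is a degenerate case where your map $\phi$ has zero source; the cases that would be hardest for your connectivity analysis are precisely those the paper disposes of with the depth bound $\depth\ge 3$. If you want to complete your route, you must prove the injectivity of $\phi$ (equivalently, the component-separation statement) in full generality, including when $(G_n(a,A)\setminus N[a-1])^c$ has several components; as written, that claim is unproved.
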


The proof of this result is mainly based on \Cref{prop.pdim.weaklychordal}. In order to apply \Cref{prop.pdim.weaklychordal}, some preparations are needed.

\begin{lem}
\label{lem.Gn.weakly.chordal}
Under the assumptions of \Cref{lem.H1GEqual0}, the following statements hold.
\begin{enumerate}
\item 
$G_n(a,A)$ is weakly chordal for all $n\ge 2r+1$.
\item 
$\inm(G_n(a,A))\le 2$ for all $n\ge 3r$.
\item Let $n\ge 3r$ and $(u_1,v_1),(u_2,v_2)\in E(G_n(a,A))$. If $(u_1,v_1),(u_2,v_2)$ with $u_1<u_2$ form an induced matching of $G_n(a,A)$, then 
\begin{align*}
  &1\le u_1<v_1\le a-1, \\
 &n-r+A+1\le u_2 < v_2 \le n-1,\\
 &a\ge 3 \quad \text{and} \quad r-A\ge 3.
\end{align*}
\end{enumerate} 
\end{lem}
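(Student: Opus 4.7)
The plan rests on three observations, packaged in the logical order (i), (iii), (ii).

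For part (i), I would note that the assumptions $a \le b$ and $A \ge B$ give $V_1 \subseteq \{1,\ldots,b-1\}$ and $V_2 \subseteq \{n-r+B+1,\ldots,n-1\}$, so by \Cref{lem.GWeaklyChordal}(i), $V(G_n(a,A)) \subseteq V(G_n \setminus N[n])$. Consequently $G_n(a,A)$ is an induced subgraph of $G_n \setminus N[n]$, which is weakly chordal for $n \ge 2r+1$ by \Cref{lem.GWeaklyChordal}(iii). Since weak chordality is preserved under induced subgraphs, (i) follows.

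For part (iii), let $(u_1,v_1),(u_2,v_2)$ with $u_1 < u_2$ be an induced matching of $G_n(a,A)$; this is also an induced matching of $G_n$. Using \Cref{lem.RightTriangle}, choose $(i_k,j_k) \in E(G_r)$ with $(u_k,v_k) \in \Delta((i_k,j_k),n-r)$ for $k=1,2$. The key input is the sharp bound
\[
v_1 < i_2 \le r \quad \text{and} \quad u_2 > n - r + j_1 \ge n - r + 2,
\]
extracted from \cite[Lemma 3.6]{HNT2024} (which is already invoked inside the proof of \Cref{lem.K2}). Since $n \ge 3r$ and $A \ge 1$, we have $r - 1 < n - r + A + 1$, so $v_1 \le r - 1 < n - r + A + 1$ forces $v_1 \notin V_2$; as $v_1 \in V_1 \cup V_2$, necessarily $v_1 \in V_1$, i.e.\ $v_1 \le a - 1$, and then $1 \le u_1 < v_1$ gives $u_1 \in V_1$ as well. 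Symmetrically, $u_2 \ge n - r + 2 > r - 1 \ge a - 1$ forces $u_2 \notin V_1$, hence $u_2 \in V_2$, yielding $u_2 \ge n - r + A + 1$; then $v_2 > u_2$ forces $v_2 \in V_2$, i.e.\ $v_2 \le n - 1$. The chains $1 \le u_1 < v_1 \le a - 1$ and $n - r + A + 1 \le u_2 < v_2 \le n - 1$ immediately give $a \ge 3$ and $r - A \ge 3$.

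For part (ii), suppose for contradiction that $G_n(a,A)$ admits an induced matching $(u_1,v_1),(u_2,v_2),(u_3,v_3)$ of size three with $u_1 < u_2 < u_3$. Each two-edge subfamily is itself an induced matching, so (iii) applied to $\{(u_1,v_1),(u_2,v_2)\}$ yields $u_2 \ge n - r + A + 1$, whereas (iii) applied to $\{(u_2,v_2),(u_3,v_3)\}$ yields $v_2 \le a - 1$. Since $a \le b \le r - 1$ and $n \ge 3r$, we have $n - r + A + 1 \ge 2r + 2 > r - 1 \ge a - 1$, so $u_2 > v_2$, contradicting $u_2 < v_2$. The only nontrivial ingredient in this whole argument is the sharp bound from \cite[Lemma 3.6]{HNT2024}; all the remaining work is routine bookkeeping between the two vertex intervals $V_1$ and $V_2$, so I anticipate no substantive obstacle.
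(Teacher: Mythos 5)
Your proof is correct. Parts (i) and (iii) follow essentially the same route as the paper: (i) observes that $G_n(a,A)$ is an induced subgraph of $G_n(b,B)=G_n\setminus N[n]$ and invokes \Cref{lem.GWeaklyChordal}, and (iii) rests on exactly the chain of inequalities $v_1<i_2<n-r+j_1<u_2$ from \cite[Lemma 3.6]{HNT2024} already used in the proof of \Cref{lem.K2}, followed by the same interval bookkeeping. The one genuine divergence is part (ii): the paper simply cites \cite[Theorem 3.1]{HNT2024}, which gives $\inm(G_n)\le 2$ for $n\ge 3r$ and hence the bound for the induced subgraph $G_n(a,A)$, whereas you derive (ii) internally by applying (iii) to the two consecutive pairs of a hypothetical three-edge induced matching and extracting the contradiction $u_2\ge n-r+A+1>a-1\ge v_2$. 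Your derivation is valid (each two-edge subfamily of an induced matching is an induced matching, and the orderings by smaller endpoints are as required), and it buys a self-contained argument that does not lean on the external induced-matching bound for the full graph $G_n$; the paper's citation is shorter but imports more. Either way the lemma stands.
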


\begin{proof}
(i) By \Cref{lem.GWeaklyChordal}, $G_n(b,B)=G_n\setminus N[n]$ is weakly chordal for all $n\ge 2r+1$. Since $G_n(a,A)$ is an induced subgraph of $G_n(b,B)$, we deduce that $G_n(a,A)$ is also weakly chordal.

(ii) The graph $G_n(a,A)$ is an induced subgraph of $G_n$. So by \cite[Theorem 3.1]{HNT2024},
\[
\inm(G_n(a,A)) \le \inm(G_n)\le 2
\quad\text{for all }
n\ge 3r.
\]

(iii) Assume that $(u_i,v_i) \in \Delta((k_i,l_i),n-r)$, where $(k_i,l_i)\in E(G_r)$ for $i=1,2$. Since $G_n(a,A)$ is an induced subgraph of $G_n$, $\{(u_1,v_1),(u_2,v_2)\}$ is also an induced matching of $G_n$. From the proof of \Cref{lem.K2} we know that 
$$
v_1 <k_2 < n-r+l_1 <u_2.
$$ 
As $V(G_n(a,A)) = \{1,\ldots ,a-1 \} \cup \{n-r+A+1,\ldots , n-1\}$ and $\max\{a-1,k_2\}<r<n-r$,
it follows that
\begin{equation*}
1\le u_1<v_1\le a-1 \text{ and  } n-r+A+1\le u_2 < v_2 \le n-1.
\end{equation*}
In particular, these inequalities yield $a\ge 3$ and $r-A\ge 3$.
\end{proof}

\begin{lem}
    \label{lem.consecutive}
    Keep the assumptions of \Cref{lem.H1GEqual0}. Assume further that $\mathfrak{B}$ is a complete bipartite subgraph of $G_n(a,A)$ with partition $V(\mathfrak{B})=W_1\cup W_2$, where $W_1,W_2\ne\emptyset$. If $U\subseteq V(\mathfrak{B})$ consists of consecutive integers, then either $U\subseteq W_1$ or $U\subseteq W_2$.
\end{lem}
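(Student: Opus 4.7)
The plan is to argue by contradiction using the sparsity hypothesis $\spi(\Icc)\ge 2$, which by \Cref{rem.sparsityindandedges} precludes any two consecutive integers from being adjacent in $G_n$ (and hence in the induced subgraph $G_n(a,A)$).

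First, I would suppose to the contrary that $U$ is not contained in either part of the bipartition. Write $U=\{u_1<u_2<\cdots<u_m\}$, so that $u_{t+1}=u_t+1$ for every $t$ since $U$ consists of consecutive integers. Since $U\subseteq V(\mathfrak{B})=W_1\sqcup W_2$ and $U$ meets both parts, there must exist an index $t$ such that $u_t$ and $u_{t+1}$ lie in different parts; otherwise, a trivial induction on $t$ would place all of $U$ in the part containing $u_1$.

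Next, I would exploit the complete bipartite structure of $\mathfrak{B}$: since $u_t$ and $u_{t+1}$ belong to different parts, the pair $\{u_t,u_{t+1}\}$ is an edge of $\mathfrak{B}$, and consequently an edge of $G_n(a,A)$. Because $G_n(a,A)$ is by definition the induced subgraph of $G_n$ on $V_1\cup V_2$, this pair is also an edge of $G_n$. However, $u_{t+1}-u_t=1<2\le \spi(\Icc)$, which by \Cref{rem.sparsityindandedges} forces $u_t$ and $u_{t+1}$ to be non-adjacent in $G_n$. This contradiction completes the argument.

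I do not anticipate any real obstacle here; the statement is essentially a direct translation of the fact that $\spi(\Icc)\ge 2$ rules out edges between consecutive integers, combined with the defining property of a complete bipartite subgraph. The only subtlety to keep in mind is the observation that if $U$ meets both parts, then by discreteness of $\Z$ at least one pair of \emph{adjacent} elements of $U$ must straddle the partition, which is what makes the sparsity hypothesis directly applicable.
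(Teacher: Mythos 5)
Your proof is correct and uses the same key idea as the paper: since $\spi(\Icc)\ge 2$, consecutive integers are never adjacent in $G_n$ (by \Cref{rem.sparsityindandedges}), so two consecutive elements of $V(\mathfrak{B})$ cannot lie in different parts of the complete bipartite subgraph. The paper phrases this as a direct induction along $U$ rather than a contradiction, but the argument is essentially identical.
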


\begin{proof}
It suffices to prove that $U\subseteq W_1$ if $U\cap W_1\ne\emptyset$. Indeed, take $k\in U\cap W_1$. Let $l=k-1$ or $l=k+1$.
Then $\{k,l\}\notin E(\mathfrak{B})$ since $\spi(\Icc)\ge 2$. Hence, $l\in W_1$ whenever $l\in V(\mathfrak{B})$. Since $U$ consists of consecutive integers, it follows easily by induction that $U\subseteq W_1$.
\end{proof}

\begin{lem}
\label{lem.Gn.no.contains.twocompletebipartite}
Keep the assumptions of \Cref{lem.H1GEqual0}. Then for all $n\ge 3r$, $G_n(a,A)$ does not contain a strongly disjoint family of two complete bipartite subgraphs $\mathfrak{B}_1,\mathfrak{B}_2$ such that $V(\mathfrak{B}_1)\cup V(\mathfrak{B}_2) = V(G_n(a,A))$.
\end{lem}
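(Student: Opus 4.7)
The plan is to argue by contradiction. Suppose such $\mathfrak{B}_1, \mathfrak{B}_2$ exist, and let $\{e_1,e_2\}$, with $e_i=\{u_i,v_i\}$ and $u_i<v_i$, be the prescribed induced matching. After relabeling so that $u_1<u_2$, \Cref{lem.Gn.weakly.chordal}(iii) places $e_1$ entirely in $V_1$ (with $1\le u_1<v_1\le a-1$) and $e_2$ entirely in $V_2$ (with $n-r+A+1\le u_2<v_2\le n-1$). Write $X_i\sqcup Y_i$ for the bipartition of $\mathfrak{B}_i$ with $u_i\in X_i$ and $v_i\in Y_i$.

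The driving idea is to locate a vertex $z^*\in V_2\cap V(\mathfrak{B}_1)$ strictly inside the interval $(u_2,v_2)$, and then transport an edge of $\mathfrak{B}_1$ that uses $z^*$ down to one of the cross-pairs $\{u_1,u_2\}$ or $\{v_1,u_2\}$ via the short south move of \Cref{lem.shortmoves}(i). Both of these cross-pairs are forbidden from being edges by the induced matching condition, which will deliver the contradiction.

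The first step is to produce $z^*$. Since the assumption forces $V(\mathfrak{B}_1)$ and $V(\mathfrak{B}_2)$ to partition $V_1\cup V_2$, every integer in the block $\{u_2,u_2+1,\ldots,v_2\}\subseteq V_2$ belongs to exactly one of the two $V(\mathfrak{B}_i)$. If they all lay in $V(\mathfrak{B}_2)$, then \Cref{lem.consecutive} applied to this run of consecutive integers would force them all into a single part of $\mathfrak{B}_2$, contradicting $u_2\in X_2$ and $v_2\in Y_2$. Hence some $z^*\in(u_2,v_2)$ sits in $V(\mathfrak{B}_1)$. The second step splits on the part to which $z^*$ belongs. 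If $z^*\in X_1$, then the complete bipartite structure of $\mathfrak{B}_1$ yields $\{v_1,z^*\}\in E(G_n)$; since $v_1\le a-1<r$ and $n-r<u_2<z^*$, \Cref{lem.shortmoves}(i) applied as a short south move (with $k=k'=v_1$, $l=z^*$, $l'=u_2$) produces $\{v_1,u_2\}\in E(G_n)$, contradicting the induced matching property. The case $z^*\in Y_1$ is completely symmetric: $\{u_1,z^*\}\in E(\mathfrak{B}_1)$ pushes south to the forbidden edge $\{u_1,u_2\}\in E(G_n)$.

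I expect the main obstacle to be the first step, namely producing $z^*$. Once it is in hand the rest is an essentially mechanical invocation of \Cref{lem.shortmoves}(i). The leverage for the first step comes entirely from the hypothesis $\spi(\Icc)\ge2$, funneled through \Cref{lem.consecutive}: this is precisely what prevents $\mathfrak{B}_2$ from containing the entire consecutive block $[u_2,v_2]$ while still separating its endpoints into opposite parts, and it is the only place where the sparsity assumption is genuinely used in the argument.
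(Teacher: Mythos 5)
Your proof is correct and follows essentially the same strategy as the paper's: use \Cref{lem.Gn.weakly.chordal}(iii) to separate the two matching edges into $V_1$ and $V_2$, use \Cref{lem.consecutive} to exhibit a vertex forced into the ``wrong'' bipartite subgraph, and then transport an edge via \Cref{lem.shortmoves}(i) onto a forbidden cross-pair. The only (harmless) difference is one of symmetry: the paper locates the misplaced vertex in $V_1$ below $v_1$ (landing it in $\mathfrak{B}_2$) and moves east, whereas you locate it in $V_2$ between $u_2$ and $v_2$ (landing it in $\mathfrak{B}_1$) and move south.
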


\begin{proof}
Assume the contrary that there exists a strongly disjoint family of two complete bipartite subgraphs $\mathfrak{B}_1,\mathfrak{B}_2$ of $G_n(a,A)$ with $V(\mathfrak{B}_1)\cup V(\mathfrak{B}_2) = V(G_n(a,A))$ for some $n\ge 3r$.
Then $G_n(a,A)$ has an induced matching $(u_1,v_1),(u_2,v_2)$, where $(u_i,v_i)\in E(\mathfrak{B}_i)$ for $i=1,2$.
We may assume that $u_1<u_2$. Then \Cref{lem.Gn.weakly.chordal}(iii) yields
\begin{equation*}
1\le u_1<v_1\le a-1 \text{ and  } n-r+A+1\le u_2 < v_2 \le n-1.
\end{equation*}
Let $V(\mathfrak{B}_1) = W_{1}\cup W_{2}$ be the vertex partition of $\mathfrak{B}_1$. We first show that $k\notin V(\mathfrak{B}_1)$ for some $k< v_1$. In fact, if $[v_1] \subseteq V(\mathfrak{B}_1)$, then it follows from \Cref{lem.consecutive} that either $[v_1]\subseteq W_1$ or $[v_1]\subseteq W_2$. But this contradicts the fact that $(u_1,v_1) \in E(\mathfrak{B}_1)$. Hence, there must exist $k< v_1$ such that $k\notin V(\mathfrak{B}_1)$.

As $k<v_1\le a-1$, we have $k\in V(G_n(a,A))=V(\mathfrak{B}_1)\cup V(\mathfrak{B}_2)$, and thus $k\in V(\mathfrak{B}_2)$. Since $\mathfrak{B}_2$ is complete bipartite and $(u_2,v_2) \in E(\mathfrak{B}_2)$, we deduce that either $(k,u_2)$ or $(k,v_2)$ belongs to $E(\mathfrak{B}_2)\subseteq E(G_n)$. Consider the case $(k,u_2)\in E(G_n)$; the case $(k,v_2)\in E(G_n)$ being similar. Since $k<v_1 \le r$ and $ n-r < u_2$, moving east from $(k,u_2)$ to $(v_1,u_2)$ using \Cref{lem.shortmoves}(i), we get $(v_1,u_2) \in E(G_n)$. Consequently, $(v_1,u_2)\in E(G_n(a,A))$, as $G_n(a,A)$ is an induced subgraph of $G_n$. But this is impossible because $(u_1,v_1),(u_2,v_2)$ is an induced matching of $G_n(a,A)$. The desired conclusion follows.
\end{proof}

\begin{lem}
\label{lem.Gn.no.contains.onecompletebipartite}
Keep the assumptions of \Cref{lem.H1GEqual0}. Assume also that $\inm(G_n(a,A))=2$ and that $G_n(a,A)$ has a complete bipartite subgraph $\mathfrak{B}$ with $|V(\mathfrak{B})| \ge |V(G_n(a,A))|-1$ and $E(\mathfrak{B})\ne \emptyset$ for some $n\ge 3r$. If $\wti{H}_{1}(\IN(G_n(a-1,A)))=\wti{H}_{1}(\IN(G_n(a,A+1)))=0$, then $\wti{H}_{1}(\IN(G_n(a,A)))=0$.
\end{lem}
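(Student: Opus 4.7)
The strategy is to apply the Mayer--Vietoris long exact sequence (\Cref{lem.MayerVietoris}) with a well-chosen vertex $v$. Note that $G_n(a,A)\setminus(a-1) = G_n(a-1,A)$ and $G_n(a,A)\setminus(n-r+A+1) = G_n(a,A+1)$, so choosing $v\in\{a-1,\,n-r+A+1\}$ allows the assumed vanishing $\wti{H}_1(\IN(G_n(a,A)\setminus v))=0$ to enter the long exact sequence, producing an injection
\[
\wti{H}_1(\IN(G_n(a,A))) \hookrightarrow \wti{H}_0\bigl(\IN(G_n(a,A)\setminus N[v])\bigr).
\]
It then suffices to show that this target $\wti{H}_0$ vanishes, equivalently, that the complement graph of $G_n(a,A)\setminus N[v]$ on its vertex set is connected (\Cref{lem.vanishingofH0}).

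To pick $v$ effectively, I would first unpack the structure of $\mathfrak{B}$. Writing $V(\mathfrak{B})=W_1\sqcup W_2$ and using \Cref{lem.consecutive} together with the fact that each of $V_1,V_2$ is a consecutive block and at most one vertex of $V=V(G_n(a,A))$ is missing from $V(\mathfrak{B})$, one deduces (up to swapping parts) that $W_1=V_1\cap V(\mathfrak{B})$ and $W_2=V_2\cap V(\mathfrak{B})$, so $\mathfrak{B}$ is complete bipartite between near-full subsets of $V_1$ and $V_2$. Since at most one vertex of $V$ is missing from $V(\mathfrak{B})$, at least one of $a-1$ and $n-r+A+1$ lies in $V(\mathfrak{B})$; this provides the candidate for $v$.

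Say $v=a-1\in W_1$ (the other case is symmetric and uses the hypothesis on $G_n(a,A+1)$). Since $W_2\subseteq N_{G_n(a,A)}(v)$, we have
\[
V(G_n(a,A))\setminus N[v]\subseteq (V_1\setminus\{v\})\cup(V_2\setminus W_2),
\]
with $|V_2\setminus W_2|\le 1$. The portion sitting inside $V_1\setminus\{v\}=\{1,\dots,a-2\}$ is a block of consecutive integers, and by \Cref{rem.sparsityindandedges} (since $\spi(\Icc)\ge 2$) consecutive integers are non-edges of $G_n$, hence edges of the complement; so any elements of $V(G_n(a,A))\setminus N[v]$ lying in such a consecutive run are joined there. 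The remaining task is to bridge any gap cut out of $V_1\setminus\{v\}$ by neighbors of $v$ and to attach the potential leftover vertex of $V_2\setminus W_2$, using \Cref{lem.shortmoves} to detect non-edges of $G_n$ between $V_1$ and $V_2$ and the structural constraints from $\inm(G_n(a,A))\le 2$ and \Cref{lem.Gn.no.contains.twocompletebipartite}.

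The main obstacle I anticipate is precisely this bridging/connectivity verification: an obstruction would produce an auxiliary edge pattern in $G_n$ that either enlarges $\mathfrak{B}$ into a complete bipartite subgraph incompatible with $\inm(G_n(a,A))\le 2$, or yields a second complete bipartite subgraph ruled out by \Cref{lem.Gn.no.contains.twocompletebipartite}, or else contradicts a short-move lemma. Making the dichotomy between $v=a-1$ and $v=n-r+A+1$ precise, so that at least one of the two choices always produces a connected complement, is expected to be the crux of the argument.
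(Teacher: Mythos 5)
Your overall skeleton---apply \Cref{lem.MayerVietoris} at $v=a-1$ or $v=n-r+A+1$, use the hypotheses $\wti{H}_1(\IN(G_n(a-1,A)))=\wti{H}_1(\IN(G_n(a,A+1)))=0$ to kill the deletion term, and reduce to showing $\wti{H}_0(\IN(G_n(a,A)\setminus N[v]))=0$---is exactly the paper's. However, there are two problems. First, your structural claim that (up to swapping parts) $W_1=V_1\cap V(\mathfrak{B})$ and $W_2=V_2\cap V(\mathfrak{B})$ is wrong, and in fact impossible under the hypotheses: since $\inm(G_n(a,A))=2$, \Cref{lem.Gn.weakly.chordal}(iii) gives an induced matching with one edge inside $V_1$ and one inside $V_2$, and since at most one vertex of $V(G_n(a,A))$ is missing from $V(\mathfrak{B})$, at least one endpoint from each edge would lie in $V(\mathfrak{B})$; your bipartition would then force a cross edge between them, contradicting the induced matching. \Cref{lem.consecutive} only controls \emph{consecutive} runs, and the point is precisely that the single missing vertex $k$ sits in the \emph{interior} of one block (say $V_1$), splitting it into $\{1,\dots,k-1\}$ and $\{k+1,\dots,a-1\}$, which land in \emph{different} parts: the correct structure (which the paper proves) is $W_1=\{k+1,\dots,a-1\}$ and $W_2=\{1,\dots,k-1\}\cup V_2$. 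This matters, because it is what identifies $a-1$ (the vertex of $W_1$ adjacent to all of $W_2$) as the right choice of $v$; your criterion ``at least one of $a-1$, $n-r+A+1$ lies in $V(\mathfrak{B})$'' does not single out the correct vertex (both typically lie in $V(\mathfrak{B})$, and choosing the one in the large part $W_2$ does not work).

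Second, the step you defer as ``the crux''---verifying that the complement of $G_n(a,A)\setminus N[v]$ is connected---is the actual content of the lemma and is not carried out. The paper closes it by proving the much stronger statement $N[a-1]=V(G_n(a,A))\setminus\{a-2\}$: the part $W_2=\{1,\dots,k-1\}\cup V_2$ is adjacent to $a-1$ because $\mathfrak{B}$ is complete bipartite, and the remaining vertices $\{k,\dots,a-3\}$ are shown to be adjacent to $a-1$ by extracting an edge $(l,l+2)\in E(G_r)$ from the fact that $(k-1,k+1)\in E(\mathfrak{B})$ (here $\spi(\Icc)\ge 2$ forces the gap to be exactly $2$) and then checking $(i,a-1)\in\Delta((l,l+2),n-r)$ via \Cref{lem.RightTriangle}. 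Thus $G_n(a,A)\setminus N[a-1]\cong K_1$, whose independence complex is a point, and the Mayer--Vietoris sequence degenerates to an isomorphism $\wti{H}_1(\IN(G_n(a,A)))\cong\wti{H}_1(\IN(G_n(a-1,A)))=0$. Without this (or some substitute for your ``bridging'' step), the argument is incomplete.
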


\begin{proof}
Let $\{(u_1,v_1),(u_2,v_2)\}$ with $u_1<u_2$ be an induced matching of $G_n(a,A)$.
Then we know from \Cref{lem.Gn.weakly.chordal}(iii) that
\begin{equation*}
1\le u_1<v_1\le a-1 \text{ and  } n-r+A+1\le u_2 < v_2 \le n-1.
\end{equation*}
Let $V(\mathfrak{B}) = W_{1}\cup W_{2}$ be the vertex partition of $\mathfrak{B}$. Then $W_1,W_2\ne\emptyset$ since $E(\mathfrak{B})\ne \emptyset$. Recall that $V(G_n(a,A))=V_1\cup V_2$, where both $V_1$ and $V_2$ consist of consecutive integers. Since $|V(\mathfrak{B})| \ge |V(G_n(a,A))|-1$, either $V_1$ or $V_2$ is contained in $V(\mathfrak{B})$. So by reindexing (if needed), it follows from \Cref{lem.consecutive} that either $V_1\subseteq W_2$ or $V_2\subseteq W_2$. We consider only the case $V_2\subseteq W_2$; the other case can be treated similarly. In this case, $u_2,v_2\in V_2\subseteq W_2$. Since $(u_1,v_1),(u_2,v_2)$ form an induced matching of $G_n(a,A)$, we infer that 
\begin{equation}
\label{eq.notinW1}
\text{neither $u_1$ nor $v_1$ belongs to $W_1$.}
\end{equation}
Hence, either $u_1$ or $v_1$ belongs to $W_2$ because $|V(\mathfrak{B})| \ge |V(G_n(a,A))|-1$. Thus $V_1\cap W_2\ne \emptyset$. This together with \Cref{lem.consecutive} and the fact that $W_1\ne\emptyset$ yields $V_1\nsubseteq V(\mathfrak{B})$. The assumption $|V(\mathfrak{B})| \ge |V(G_n(a,A))|-1$ now imples that
\[
V(\mathfrak{B})=(V_1\setminus\{k\})\cup V_2
\]
for some $k\in V_1$.
Note that $k\ge 2$, since otherwise, $V_1\setminus \{1\} \subseteq V(\mathfrak{B})$. So \Cref{lem.consecutive} yields $V_1\setminus \{1\}\subseteq W_1$ or $V_1\setminus \{1\}\subseteq W_2$. As $W_1\neq \emptyset$, we deduce that $V_1\setminus \{1\}\subseteq W_1$. But then either $u_1$ or $v_1$ belongs to $W_1$, contradicting \eqref{eq.notinW1}.

\begin{claim}
\label{cl.partition}
    It holds that 
    \[
    W_1=\{k+1,\dots,a-1\}
    \quad\text{and}\quad
    W_2=\{1,\ldots,k-1\} \cup V_2.
    \]
\end{claim}

Indeed, we have $1\in V(\mathfrak{B})$ because $k\ge 2$. If $1\in W_1$, then $(1,u_2)\in E(\mathfrak{B})\subseteq E(G_n)$ since $u_2\in W_2$. Recall that $1\le u_1 \le r$ and $n-r <u_2$. So according to \Cref{lem.shortmoves}(i), it holds that $(u_1,u_2)\in E(G_n)$, whence $(u_1,u_2)\in E(G_n(a,A))$. But this contradicts the fact that $\{(u_1,v_1),(u_2,v_2)\}$ is an induced matching of $G_n(a,A)$. Thus we must have $1\in W_2$. This together with \Cref{lem.consecutive} implies $[k-1]\subseteq W_2$. Hence, $[k-1]\cup V_2\subseteq W_2$. Now since $W_1\ne\emptyset$, the desired claim follows easily from \Cref{lem.consecutive}.

\begin{claim}
\label{cl.neighborhood}
    The closed neighborhood of $a-1$ in $G_n(a,A)$ is 
$$
N[a-1]=\{1,\ldots,a-3,a-1\} \cup V_2 
= V(G_n(a,A))\setminus \{a-2\}.
$$
\end{claim}

Since $\spi(\Icc)\ge 2$, we have $a-2\notin N[a-1]$, hence $N[a-1]\subseteq V(G_n(a,A))\setminus \{a-2\}$. On the other hand, \Cref{cl.partition} gives $W_2\subseteq N[a-1]$. Thus it remains to show that
\begin{equation}
   \label{eq.Na}
   \{k,k+1,\ldots,a-3\} \subseteq N(a-1).
\end{equation}

From \Cref{cl.partition} we know that $(k-1,k+1)\in E(\mathfrak{B})\subseteq E(G_n).$ By \Cref{lem.RightTriangle} and the assumption that $\spi(\Icc)\ge 2$, this implies that $(k-1,k+1)\in\Delta((l,l+2),n-r)$ for some $(l,l+2)\in E(G_r)$. In particular, one has $l\le k-1$. Now using \Cref{lem.RightTriangle}, it is easy to check that $(i,a-1)\in\Delta((l,l+2),n-r)$ for any $k\le i\le a-3$. Hence, \eqref{eq.Na} is true, as required.

\Cref{cl.neighborhood} implies that $G_n(a,A)\setminus N[a-1]\cong K_1$, where $K_1$ is the complete graph on one vertex. Moreover, it is clear 
that $G_n(a,A)\setminus \{a-1\}=G_n(a-1,A)$. So applying \Cref{lem.MayerVietoris} to $G_n(a,A)$ and its vertex $a-1$, and using the fact that $\wti{H}_0(\IN(K_1))=\wti{H}_1(\IN(K_1))=0$, we obtain 
\[
\wti{H}_1(\IN(G_n(a,A))) \cong \wti{H}_1(\IN(G_n(a-1,A)))=0.
\]
The proof is complete.
\end{proof}

We are now prepared to give the proof of \Cref{lem.H1GEqual0}.

\begin{proof}[Proof of \Cref{lem.H1GEqual0}]
Fix an $n\ge 3r$. We show that $\wti{H}_{1}(\IN(G_n(a,A)))=0$ by a double induction on $a \in [b]$ and on $r-A \in [r-B]$.
Let $S=\kk[x_i\mid i\in V(G_n(a,A))]$ and denote by $L(a,A)\subseteq S$ the edge ideal of $G_n(a,A)$.
Recall that the graph $G_n(a,A)$ is weakly chordal by \Cref{lem.Gn.weakly.chordal}(i). So \cite[Theorem 14]{Wo14} yields
\begin{equation*}
\reg(S/L(a,A)) = \inm(G_n(a,A)).
\end{equation*}

According to \Cref{lem.Gn.weakly.chordal}(ii), we have $\inm(G_n(a,A))\le 2$. Let us first consider the case $\inm(G_n(a,A))\le 1$. Notice that this case covers the case that either $a\le 2$ or $r-A\le 2$ by virtue of \Cref{lem.Gn.weakly.chordal}(iii). Since $\reg(S/L(a,A))= \inm(G_n(a,A)) \le 1$, it follows from Takayama's formula and \Cref{lem.FacetsofDegComplex}(i) that 
\[
\wti{H}_1(\IN(G_n(a,A))) \cong H^2_{\mm}(S/L(a,A))_\bfo =0,
\]
where $\mm$ denotes the graded maximal ideal of $S$.

Now assume that $\inm(G_n(a,A))=2.$ In this case, $a\ge 3$ and $r-A\ge 3$ by \Cref{lem.Gn.weakly.chordal}(iii). Since $G_n(a,A)$ is weakly chordal,
\Cref{prop.pdim.weaklychordal} implies that there exists a strongly disjoint family of complete bipartite subgraphs $\mathfrak{B}_1,\ldots, \mathfrak{B}_g$ of $G_n(a,A)$ with $1\le g\le \inm(G_n(a,A))$ such that
\begin{equation*}
\pd (S/L(a,A))=\sum_{i=1}^g|V(\mathfrak{B}_i)|-g.
\end{equation*}
Since $\inm(G_n(a,A))=2$, there are only two cases to consider.

\emph{Case 1}: $g=2$. By \Cref{lem.Gn.no.contains.twocompletebipartite}, $V(\mathfrak{B}_1)\cup V(\mathfrak{B}_2) \subsetneq V(G_n(a,A))$. It follows that
$$
\pd(S/L(a,A))=|V(\mathfrak{B}_1)|+|V(\mathfrak{B}_2)|-2\le |V(G_n(a,A))|-1-2= \dim(S)-3.
$$
Hence, $\depth(S/L(a,A))\ge 3$ by the Auslander--Buchsbaum formula. This together with Takayama's formula and \Cref{lem.FacetsofDegComplex}(i) implies that 
$$
\wti{H}_1(\IN(G_n(a,A))) \cong H^2_{\mm}(S/L(a,A))_\bfo =0.
$$

\emph{Case 2}: $g=1$. If $|V(\mathfrak{B}_1)|\le |V(G_n(a,A))|-2$, then again 
$$
\pd(S/L(a,A))=|V(\mathfrak{B}_1)|-1\le |V(G_n(a,A))|-2-1 = \dim(S)-3,
$$
and we reach the desired conclusion with the same argument as in the previous case. Now suppose that $|V(\mathfrak{B}_1)|\ge |V(G_n(a,A))|-1$. From the induction hypothesis we know that
$$\wti{H}_{1}(\IN(G_n(a-1,A)))=\wti{H}_{1}(\IN(G_n(a,A+1)))=0.$$ 
So by
\Cref{lem.Gn.no.contains.onecompletebipartite}, 
$\wti{H}_1(\IN(G_n(a,A)))=0$. This completes the proof.
\end{proof}

\begin{acknowledgment}
The first and fifth named authors (TQH and TTN) were supported by the Vietnam National Foundation for Science and Technology Development (NAFOSTED) under the grant number 101.04-2023.07. The fourth named author (HDN) was supported by NAFOSTED under the grant number 101.04-2023.30.  Part of this work was done while the authors were visiting the Vietnam Institute for Advanced Study in Mathematics (VIASM). We would like to thank VIASM for hospitality and financial support.
\end{acknowledgment}

\end{document}